\documentclass[10pt, twoside, reqno]{amsart}

\usepackage{gsstyles3}
\usepackage{gscommands3}

\usepackage[headsep=30pt,footskip=25pt, left = 1in,  right = 1in, bottom=1.25in,
top=1.3in]{geometry}

\pagestyle{myheadings}
\setlength{\parskip}{3pt}


\title[Tensor t-structures, perversity functions
and weight structures]{Tensor t-structures, perversity functions
and weight structures}
\date{} 
\dedicatory{}

\author[Gopinath Sahoo]{Gopinath Sahoo}

\address{Tata Institute of Fundamental Research,  Mumbai,  India.}
\curraddr{}
\email{gsahoo@math.tifr.res.in}
\thanks{}

\subjclass[2010]{Primary  14F08,  secondary 18G80}
\keywords{Derived categories, t-structures,  weight structures, semi-orthogonal decompositions.}


\begin{document}

\begin{abstract}
 We introduce the notion of tensor t-structures on the bounded derived categories of schemes. 
For a Noetherian scheme $X$ admitting a dualizing complex,  Bezrukavnikov-Deligne,  and then 
independently Gabber and Kashiwara have shown that given a monotone 
comonotone perversity function on $X$ one can construct a t-structure on $\deriveb X$.  We 
show that such t-structures are tensor t-structures and conversely every tensor t-structure on 
$\deriveb X$ arises in this way.  We achieve this by first characterising tensor t-structures in 
terms of Thomason-Cousin filtrations which generalises earlier results of Alonso,  Jeremías and Saorín, from 
Noetherian rings to schemes.  We also show that for a smooth projective curve $C$, the derived 
category $\deriveb C$ has no non-trivial tensor weight structures, this extends our earlier 
result on  the projective line to higher genus curves.

\end{abstract}

\maketitle

\tableofcontents

\section{Introduction}

The classification theorem of thick subcategories of finite spectra in the stable homotopy 
theory \cite[Theorem 7]{Hop87} sparked the interest to study similar problems in the 
algebraic setting of derived categories.  Hopkins,  \cite[Theorem 11]{Hop87}, proved that the 
thick subcategories of $\perfect R$, for a commutative ring $R$, are completely determined by 
the specialisation-closed subsets of $\Spec R$.  For Noetherian rings,  in \cite{Nee92},
Neeman classified localizing and smashing subcategories of the unbounded derived category 
$\mathbf{D}(R)$.  Thomason in \cite[Theorem 3.15]{Thomason} classified the 
thick tensor ideals of $\perfect X$, generalizing the theorem of Hopkins from  
rings to schemes.  Ever since the work of Hopkins, Neeman and Thomason there has 
been a great interest to understand various subcategories of triangulated categories arising 
both in algebraic geometry and representation theory.

The notion of t-structures on a triangulated category was introduced  in \cite{BBD} by Belinson 
et al. in their study of perverse sheaves.   In our earlier work \cite{DS23}, motivated  by the 
notion of rigid aisles in \cite[\S 5]{AJS03} we defined the notion of tensor t-structures on a 
triangulated category,  in particular  on $\derive X$  where $X$ is a Noetherian scheme.  
These t-structures restrict well to the open subsets.  Using the local nature of these 
t-structures, we were able to generalize earlier results from Noetherian rings to schemes.  
For instance, we have shown in \cite[Theorem 5.11]{DS23} that the compactly generated tensor 
t-structures on $\derive X$ are in one-to-one correspondence with the Thomason filtrations on 
$X$ which generalizes the result of Alonso,  Jeremías and Saorín \cite[Theorem 3.11]{AJS10}.

One of the two goals of this article is to study similar notions of tensor t-structures on the 
bounded 
derived category $\deriveb X$ and connect  it to some of the known results of \cite{AJS10}, 
\cite{Bez10}, \cite{Gabber} and \cite{Kashiwara}.  But in this generality the 
derived tensor product on $\derive X$ does not 
restrict to the bounded derived category $\deriveb X$ unless $X$ is regular.  So we can not 
define it
using the aisle of the standard t-structure as it is done in the unbounded case of $\derive X$.   
However,  
$\perfect {X}$ has a well defined action on $\deriveb X$.  Using this we have defined tensor 
t-structures on the derived category $\deriveb X$, see Definition \ref{tensor definition} and  
Remark \ref{key remark}.  Once we have the right definition we could prove that
every tensor t-structure on the bounded derived category is completely determined by its
support data.  Suppose $\varphi$ is a Thomason-Cousin filtration,  see Definition 
\ref{Thomason-Cousin},  consider the subcategory of $\deriveb X$ associated to $\varphi$ which is defined as:
\[ \cat U _{\varphi} = \{ A \in \deriveb X \mid \Supp (H^i(A)) \subset \varphi (i) \}. \]

Then,  we have the following theorem,  it is a restatement of Theorem \ref{Main theorem}.  And
it generalizes \cite[Theorem 6.9]{AJS10} from Noetherian rings to schemes.

\begin{theorem}
\label{Theorem alpha}
	Let $X$ be a Noetherian scheme admitting a dualizing complex.  The pair of subcategories 
	$(\cat U _\varphi , \cat U _\varphi ^{\perp} )$ is a tensor t-structure 
	on $\deriveb X$.  
	
	Conversely,  any tensor t-structure $(\cat U,  \cat V)$ on $\deriveb X$ arises from a 
	Thomason-Cousin filtration,  that is, there is a unique Thomason-Cousin filtration 
	$\varphi$ such that $\cat U = \cat U _\varphi$ and $\cat V = \cat U _\varphi ^{\perp}$.
\end{theorem}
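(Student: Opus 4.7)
The plan is to prove the two directions separately, handling first the construction of $(\cat U_\varphi, \cat U_\varphi^\perp)$ as a tensor t-structure and then the extraction of a Thomason--Cousin filtration from an arbitrary tensor t-structure.

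For the first direction, to a Thomason--Cousin filtration $\varphi$ I would associate a monotone comonotone perversity function $p_\varphi$ on $X$, essentially by $p_\varphi(x) = \sup\{i : x \in \varphi(i)\}$. The Thomason--Cousin axioms on $\varphi$ translate precisely into the monotonicity and comonotonicity of $p_\varphi$, so the Bezrukavnikov--Deligne construction, as carried out in \cite{Bez10}, \cite{Gabber}, \cite{Kashiwara}, produces a t-structure on $\deriveb X$ whose aisle is exactly $\cat U_\varphi$. The tensor property then follows because tensoring with a perfect complex $P$ preserves the support-based condition: locally $P$ is a finite complex of finite free modules, so every cohomology sheaf $H^i(P \otimes^L A)$ has support contained in $\bigcup_j \Supp H^j(A)$, which by monotonicity of $\varphi$ lies in $\varphi(i)$.

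For the converse, given any tensor t-structure $(\cat U, \cat V)$ on $\deriveb X$, I would define
\[
\varphi(i) := \bigcup_{A \in \cat U} \Supp H^i(A).
\]
The first task is to verify the Thomason--Cousin axioms: specialisation-closedness of $\varphi(i)$ follows from closure of $\cat U$ under the $\perfect X$-action, applied to Koszul-type perfect complexes that propagate support along specialisations; monotonicity in $i$ is obtained by applying standard truncations to witnesses. The Cousin condition, controlling the jump $\varphi(i)\setminus\varphi(i+1)$ by codimension, is where the dualizing complex enters, since it supplies the codimension filtration needed to compare consecutive levels. The inclusion $\cat U \subset \cat U_\varphi$ is tautological from the definition of $\varphi$.

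The heart of the theorem is the reverse inclusion $\cat U_\varphi \subset \cat U$, which I would prove by dévissage. Using the standard truncation triangles, one reduces to objects $A$ whose cohomology is concentrated in a single degree $i$ and supported on $\varphi(i)$. Such objects are then filtered by the Cousin complex associated with the dualizing complex, whose graded pieces are direct sums of local-cohomology sheaves at points $x \in \varphi(i)$. For each such point one picks a witness $A_x \in \cat U$ with $x \in \Supp H^i(A_x)$ and uses the $\perfect X$-action — tensoring $A_x$ with a Koszul complex centred at $x$ — to realise the required local contribution inside $\cat U$. Uniqueness of $\varphi$ is then automatic, since any two Thomason--Cousin filtrations with the same aisle must give the same support set in each cohomological degree.

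The main obstacle is this final inclusion: producing enough local building blocks inside $\cat U$ from the tensor action of $\perfect X$ alone. Unlike the unbounded setting of \cite[Theorem 5.11]{DS23}, one cannot appeal to arbitrary coproducts to assemble resolutions inside $\deriveb X$, and it is precisely the dualizing complex hypothesis that supplies the finite Cousin-type resolutions needed to replace those coproduct arguments.
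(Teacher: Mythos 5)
Your first half is correct but takes a genuinely different route from the paper: you pass from $\varphi$ to the associated monotone comonotone perversity $p$ and invoke the Bezrukavnikov--Deligne--Gabber--Kashiwara theorem, whereas the paper proves existence by localising to affine opens and reducing to \cite[Theorem 6.9]{AJS10} (Theorem \ref{main affine theorem}), and only afterwards deduces the perversity formulation (Theorem \ref{Perversity}) from the filtration one via Lemma \ref{phi and p lemma}. Your route is legitimate and arguably shorter for this half, but one step is imprecise: $\bigcup_j \Supp H^j(A)$ is \emph{not} contained in $\varphi(i)$ in general. You must use that $P \in \perfectzero X$ is concentrated in degrees $\leq 0$, so that only the $H^j(A)$ with $j \geq i$ contribute to $H^i(P \otimes A)$, and then $\Supp H^j(A) \subset \varphi(j) \subset \varphi(i)$ because the filtration is decreasing; this is exactly the spectral-sequence argument in the paper's proof of Theorem \ref{Perversity}.

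The converse is where your proposal has genuine gaps. First, the Cousin condition on $\varphi$ is asserted (``the dualizing complex supplies the codimension filtration'') but never proved; this is a real theorem, the analogue of \cite[Corollary 4.5]{AJS10}, which the paper obtains by restricting the given t-structure to an affine open containing a specialization pair --- and that restriction step itself requires the tensor hypothesis via Lemmas \ref{Key lemma}, \ref{j!} and \ref{localizes to open}, none of which appear in your outline. Second, and more seriously, your d\'evissage for the inclusion $\cat U_\varphi \subset \cat U$ fails as stated: the graded pieces of the Cousin complex of a coherent sheaf are local cohomology modules at points, which are not coherent, so the proposed filtration does not live in $\deriveb X$; and the step ``pick a witness $A_x$ and tensor with a Koszul complex to realise the required local contribution'' leaves unexplained how to manufacture an \emph{arbitrary} coherent sheaf supported on $\varphi(i)$, placed in degree $i$, out of finitely many extensions, shifts and tensor operations applied to a single witness --- precisely the point where the coproduct argument of the unbounded setting \cite{DS23} is unavailable. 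The paper closes this gap differently: Proposition \ref{phi lemma} together with Lemma \ref{Support lemma} reduces the inclusion to Hrbek's result on preaisles over Noetherian rings, and notably needs neither the Cousin condition nor the dualizing complex; the dualizing complex is only needed, via the reduction to the affine Theorem \ref{main affine theorem}, to control when $\cat U_\varphi \cap \deriveb X$ actually admits truncations.
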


As an easy consequence of this theorem, we show that every semi-orthogonal decomposition 
on $\deriveb X$ that satisfies the tensor condition is trivial,  see Corollary \ref{tensor SoD}.  In 
particular,  for a connected Noetherian ring admitting a dualizing complex,  the bounded derived 
category $\deriveb R$ is indecomposable.

In the expository article \cite{Bez10},  Bezrukavnikov brought to attention the 
unpublished results of Deligne concerning constructions of t-structures from perversity 
functions. More precisely,  in \cite[Theorem 1]{Bez10},  the author shows that given a monotone 
comonotone perversity 
function on $X$ one can construct a t-structure on $\deriveb X$.  Independently Gabber 
\cite[Theorem 9.1]{Gabber} and then Kashiwara \cite[Theorem 5.9]{Kashiwara} has also 
obtained the same result. 
In Theorem \ref{Perversity} we show such t-structures are tensor t-structures,  and the 
converse is true, that is, every tensor t-structure on 
$\deriveb X$ arises in this way.  This provides a complete picture of this circle of ideas,  and 
connects the very categorical notion of tensor t-structures on $\deriveb X$ to both the
t-structures arising from perversity functions, and t-structures arising from Thomason-Cousin 
filtrations.  As an application of the characterization in terms of perversity functions, in 
Proposition \ref{pullback} we have shown, when the pullback of a tensor t-structure is a tensor 
t-structure.  In particular,  when $X$ is an integral finite type scheme over a field $k$, unlike 
semi-orthogonal decompositions, the tensor t-structures on $\deriveb X$ always restricts 
to closed subschemes.

When this article was at the final stage of preparation we came across the preprint 
\cite{Biswas},  we were aware of their earlier shorter version in which the authors had 
obtained the non-existence of tensor t-structures on perfect complexes of singular Noetherian 
schemes.  In the updated one the authors have now studied the bounded derived categories.
Their \cite[Theorem 3.12]{Biswas} is similar to our Corollary \ref{key corollary},  probably this
is the only common point.  Though they also use our earlier results of \cite{DS23} as we do to 
obtain the above corollary,  the perspective is quite different,  we use our classification 
Theorem \ref{Main theorem}.  Nevertheless,  the notion of tensor t-structure on $\deriveb X$ 
and the connection to perversity functions is completely new in our article.

The second goal of this article is to study the notion of weight structures on the bounded 
derived categories of smooth projective curves.  Weight structures were introduced by 
Bondarko  as a natural counterpart of t-structures in \cite{Bon10}. It has been observed by 
Bondarko that the two notions,  t-structures and weight structures, are connected by interesting 
relations.  In this vein,  S\v{t}ov\'{\i}\v{c}ek and 
Posp\'{\i}\v{s}il have proved for a certain class of triangulated categories,  in particular for 
 the derived category $\mathbf{D}(R)$ of a Noetherian ring, the 
 compactly generated t-structures and compactly generated weight structures are in bijection 
\cite[Theorem 4.10 and Theorem 4.15]{SP16} with each other,  where the bijection goes via
a duality at the perfect complex level; note that the authors call them co-t-structures what we
call weight structures.

Analogous to the notion of tensor t-structures we have introduced the notion of tensor weight structures in \cite{DS23_2} on $\derive X$ and 
have generalized the theorem of S\v{t}ov\'{\i}\v{c}ek and Posp\'{\i}\v{s}il from Noetherian rings 
to schemes.  That is,  when $X$ is a separated Noetherian scheme,  there is a one-to-one 
corresponds between the compactly generated tensor t-structures and compactly 
generated tensor weight structures on $\derive X$.  In sharp contrast to the above fact 
when one restricts to the bounded derived 
category $\deriveb X$, unlike tensor t-structures, tensor weight structures seems to vanish 
altogether.  We had earlier found that in the case of projective 
line, \cite[Proposition 30]{DS23_2}, there are no tensor weight structures.  In the last section of 
this article our further investigation shows that the same is true for all curves of higher genus,
see Theorem \ref{weight structure theorem}.

This is interesting since in the affine case, at least for regular Noetherian rings, we have lots of 
tensor weight structures particularly those coming from the brutal truncations. We have also 
found that in the case of elliptic curves,  not just tensor weight structures,  there
are no weight structures on the bounded derived category other than the trivial ones, see 
Theorem \ref{Elliptic Curve Theorem}.  
Using the techniques developed for the above results, we could also provide a new proof of 
the well know fact \cite[Theorem 1.1]{Okawa} that when $X$ is a smooth projective curve of 
genus greater than equal to one, there are no non-trivial semi-orthogonal decompositions on 
$\deriveb X$,  that is, the bounded derived category is indecomposable in this case,  see
Theorem \ref{Sod theorem}.

\section{The Antecedent}

In this section, we briefly recall the definition of t-structures,  compactly generated
t-structures and tensor t-structures;for more details we refer the reader to our earlier work 
\cite{DS23}.  We freely use the notions of preaisle,  aisle and their connection to t-structure,
for details see \cite[\S 2]{DS23}.  We follow the same notations and symbols as used in loc. cit., 
with one exception that is,  instead of writing $(\cat U, \cat V[1])$ as t-structures we use the 
simpler notation $(\cat U, \cat V)$ in this article.

Let \cat T be a triangulated category and $\cat T^c$  denote the full subcategory of compact
objects.  
\begin{definition}
	A \emph{t-structure} on \cat T is a pair of full subcategories $(\cat U, \cat V)$ satisfying the 
	following properties:
	\begin{itemize}
		\item[t1.] $ \cat U[1] \subset \cat U$ and $  \cat V[-1] \subset \cat V$.
		\item[t2.] $\cat U \perp  \cat V$.
		\item[t3.] For any $T \in \cat T$ there is a distinguished triangle
		\[ U \rar T \rar V \rar  U[1]\]
		where $U \in \cat U$ and $V \in \cat V$.  We call such a triangle 
		\emph{truncation decomposition} of $T$.
	\end{itemize}
	
\end{definition}

In \cite{Sta10}, Stanley has shown that the collection of t-structures on $\mathbf{D}(\Z)$ is a 
proper class, not a set. Therefore,  one can not classify them in terms of the subsets of the 
spectrum.  We restrict our attention to the smaller class - compactly generated ones.  

\begin{definition}
	We say a t-structure $(\cat U ,  \cat V)$ on $\cat T$ is \emph{compactly generated} if there 
	is a set 
	of compact objects $\cat S$ in $\cat T ^c$ such that for any non-zero $U \in \cat U$ if 
	$ \Hom (S, U) = 0$ for all
	$S \in \cat S$ then $U$ is zero.  For the sake of brevity we will call such t-structures 
	\emph{compact t-structures}.  
\end{definition}

In \cite{AJS10}, the authors show that the compact t-structures on $\mathbf{D} (R)$ for a 
Noetherian ring can be characterised by their support data.  We restate it here.  Before that
we recall the definition of Thomason filtrations.

\begin{definition} 
	Let $X$ be a Noetherian scheme.  A \emph{Thomason filtration} on $X$ is a map 
	$\varphi : \Z \rar 2^X $ such that $\varphi $ satisfies the following properties:
	\begin{itemize}
		\item[i.] for any $x$, $y$ $\in X$ such that $y \in \bar{\{x\}}$,  if $x \in \varphi (i)$ then
		$y \in \varphi (i)$. 
		
		\item[ii.] $\varphi (i) \supset \varphi(i+1)$ for all $i \in \Z$.

	\end{itemize}
	
	We say a Thomason filtration is of \emph{finite length} if for 
	$j \ll 0$,  $\varphi(j) = \varphi (j-1)$  and for $i \gg 0$,  $\varphi (i) = \emptyset$.

\end{definition}

\begin{theorem}
\label{AJS}
	Let $R$ be a Noetherian ring. There is a one-to-one correspondence between the 
	 compact t-structures on $\mathbf{D}(R)$ and the Thomason filtrations on
	$\Spec R$.
\end{theorem}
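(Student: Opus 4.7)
The plan is to exhibit the correspondence explicitly in both directions and then verify it is a bijection. To a Thomason filtration $\varphi$ on $\Spec R$, I would assign
\[ \cat U _{\varphi} = \{ A \in \mathbf{D}(R) \mid \Supp H^i(A) \subset \varphi(i) \text{ for all } i \in \Z \}, \]
and to a compact t-structure $(\cat U, \cat V)$ the function
\[ \varphi_{\cat U}(i) = \bigcup_{A \in \cat U} \Supp H^i(A). \]
The claim is that these two assignments are mutually inverse.

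For the forward direction, I would first check that $\cat U _{\varphi}$ is a cocomplete preaisle: closure under the suspension $[1]$ uses $\varphi(i+1) \subset \varphi(i)$, closure under extensions uses the long exact cohomology sequence, and closure under coproducts uses that $H^i$ commutes with coproducts. To promote $\cat U _{\varphi}$ to an aisle I would exhibit an explicit compact generating set. Thomason's classification tells us that for each $i \in \Z$ the thick tensor ideal of $\perfect R$ with support in $\varphi(i)$ is generated by the Koszul complexes $K(\underline{f})$ with $V(\underline{f}) \subset \varphi(i)$. Setting
\[ \cat S = \bigcup_{i \in \Z} \{ K(\underline{f})[-i] : V(\underline{f}) \subset \varphi(i) \} \]
and letting $\cat U _{\cat S}$ denote the smallest cocomplete preaisle containing $\cat S$, the theorem of Alonso--Jerem\'ias--Souto guarantees that $\cat U _{\cat S}$ is a compactly generated aisle. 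The inclusion $\cat U _{\cat S} \subset \cat U _{\varphi}$ is immediate from the closure properties; for the reverse, given $A \in \cat U _{\varphi}$, I would write $A$ as a homotopy colimit of its truncations, express each cohomology $H^i(A)$ as a filtered colimit of cyclic modules $R/I$ with $V(I) \subset \varphi(i)$, and build $A$ from the Koszul complexes in $\cat S$ by iterated cones.

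For the backward direction, given a compact t-structure $(\cat U, \cat V)$, descending monotonicity of $\varphi_{\cat U}$ follows from $\cat U[1] \subset \cat U$, while specialization-closedness is automatic because the support of any module over a Noetherian ring is already specialization-closed. To prove $\cat U = \cat U _{\varphi_{\cat U}}$, the inclusion $\cat U \subset \cat U _{\varphi_{\cat U}}$ is tautological. For the reverse I would apply the forward direction to $\varphi_{\cat U}$ and then use the compact generation hypothesis together with Thomason's theorem to conclude that the Koszul compact generators of $\cat U _{\varphi_{\cat U}}$ already lie in $\cat U$, forcing the two aisles to coincide.

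The main obstacle is the generation step at the end of the forward direction: showing that an arbitrary $A \in \cat U _{\varphi}$ truly lies in the aisle generated by $\cat S$. This requires careful homotopy-colimit bookkeeping so that each cohomology group, which is known only to sit in the correct specialization-closed subset, contributes cones in exactly the right cohomological degree. Once this is in place the remainder of the argument reduces to formal manipulations: the closure properties above, Brown representability for aisles via Alonso--Jerem\'ias--Souto, and Thomason's classification to pass between compact objects and their closed supports.
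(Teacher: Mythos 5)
Your proposal is correct and takes essentially the same route as the paper, whose ``proof'' of this statement is simply the citation \cite[Theorem 3.11]{AJS10}: the mutually inverse assignments $\varphi \mapsto \cat U_{\varphi}$ and $(\cat U,\cat V) \mapsto \varphi_{\cat U}$, with $\cat U_{\varphi}$ exhibited as the aisle compactly generated by shifted Koszul complexes via the Hopkins--Neeman--Thomason classification of supports, is precisely the argument of Alonso--Jerem\'{\i}as--Saor\'{\i}n. The step you flag as the main obstacle (building an arbitrary object of $\cat U_{\varphi}$ from the Koszul generators by homotopy colimits with the correct degree bookkeeping) is indeed the technical heart of their proof, so no further commentary is needed.
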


\begin{proof}
	\cite[Theorem 3.11]{AJS10}
\end{proof}

The following example is just a special case of \cite[Remark,  page 328]{AJS10}

\begin{example}

	In the derived category of abelian groups the aisle generated by $\Q$ is not compactly 
	generated.  Indeed,  if it is compactly generated then by the classification of compactly 
	generated t-structures it must be $\mathbf{D} \pre 0 (\Z)$,  However,  $\Z \notin 
	\langle \Q \rangle ^{\leq 0} $.

\end{example}

In our earlier work,  we have generalized Theorem \ref{AJS} to arbitrary Noetherian schemes.  
For the analogous statement to be true we introduced the notion of tensor t-structures, 
motivated  by the notion of rigid aisles in \cite[\S 5]{AJS03}.  We recall it here.

\begin{definition}
	A t-structure $(\cat U, \cat V)$ on $\derive X$ is a tensor t-structure if for any $U \in \cat U$
	and any $A \in \mathbf{D}^{\leq 0} _{qc} (X)$ we have $U \otimes A \in \cat U$. 
\end{definition}

Recall that $ \cat U _{\varphi} = \{ A \in \derive X \mid \Supp (H^i(A)) \subset \varphi (i) \}$ and with the above definition we have shown that:

\begin{theorem}
\label{Earlier Theorem}
	Let $X$ be a Noetherian scheme.  There is a one-to-one correspondence between  compact 
	tensor t-structures on $\derive X$ and Thomason filtrations on $X$. In particular, every 
	compact tensor t-structure on $\derive X$ is of the form 
	$(\cat U _\varphi , \cat U _\varphi ^{\perp} )$ where $\varphi$ 
	is a Thomason filtration on $X$.

\end{theorem}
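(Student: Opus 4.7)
The plan is to realize both directions of the bijection through explicit compact generators, using the local nature of tensor t-structures to reduce the global scheme case to the affine Theorem \ref{AJS}. For the forward direction, given a Thomason filtration $\varphi$, for each $i \in \Z$ and each $x \in \varphi(i)$ I would pick an affine open neighbourhood $j \colon \Spec R \hookrightarrow X$ of $x$ and a Koszul complex $K_x$ on generators of the prime ideal $\mathfrak{p}_x \subset R$; then $j_! K_x[-i]$ is a compact object of $\derive X$ whose cohomology sits in degrees $\leq i$ with $H^i$ supported on $\bar{\{x\}} \subseteq \varphi(i)$. Taking $\cat S_\varphi$ to be the set of all such complexes, the smallest preaisle closed under coproducts containing $\cat S_\varphi$ is, by standard aisle-generation arguments, the aisle of a compactly generated t-structure $(\cat U,\cat V)$, and $\cat S_\varphi \subseteq \cat U_\varphi$ forces $\cat U \subseteq \cat U_\varphi$. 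The reverse inclusion is checked after restriction to each affine open $\Spec R_\alpha \subseteq X$, where it reduces to Theorem \ref{AJS} applied to $\varphi|_{\Spec R_\alpha}$.

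To verify the tensor property of $(\cat U_\varphi, \cat U_\varphi^{\perp})$, I would use a direct support calculation: for $U \in \cat U_\varphi$ and $A \in \mathbf{D}^{\leq 0}_{qc}(X)$, a filtration argument or K\"unneth-type spectral sequence yields
\[
\Supp\bigl(H^k(U \otimes A)\bigr) \subseteq \bigcup_{i+j=k,\, j \leq 0} \bigl(\Supp(H^i(U)) \cap \Supp(H^j(A))\bigr) \subseteq \bigcup_{i \geq k} \varphi(i) \subseteq \varphi(k),
\]
where the last inclusion uses that $\varphi$ is decreasing. For the converse, given a compact tensor t-structure $(\cat U, \cat V)$, I would define $\varphi(i) := \bigcup_{U \in \cat U} \Supp(H^i(U))$. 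Specialisation-closure of each support set, together with the identity $H^i(U[1]) = H^{i+1}(U)$ and the inclusion $\cat U[1] \subseteq \cat U$, makes $\varphi$ a Thomason filtration. The containment $\cat U \subseteq \cat U_\varphi$ is tautological; the reverse inclusion is proved by restricting to affines and invoking Theorem \ref{AJS} to identify the restricted t-structure with the aisle of $\varphi|_{\Spec R_\alpha}$.

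The main obstacle is the local-to-global step in the reverse direction: one must show that the restriction of a compact tensor t-structure on $\derive X$ to each affine piece $\derive{R_\alpha}$ is again a compact t-structure, with support data matching $\varphi|_{\Spec R_\alpha}$. This is where the tensor axiom is essential; without it, Stanley's example indicates that t-structures need not be determined by their supports at all, and can fail to form even a set. The tensor condition ensures that truncation triangles interact compatibly with $j^{*}$, and that compact generators on $X$ restrict to compact generators on $\Spec R_\alpha$. Granting this localisation lemma, the two assignments are visibly mutually inverse: the filtration recovered from $\cat U_\varphi$ contains every $x \in \varphi(i)$ because $j_! K_x[-i] \in \cat U_\varphi$ has $H^i$ supported on $\bar{\{x\}}$, while the identity $\cat U_{\varphi_{\cat U}} = \cat U$ follows from the affine-local identification.
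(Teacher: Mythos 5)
The paper does not actually prove this statement: it is quoted from the author's earlier work and the ``proof'' given is only the citation \cite[Theorem 5.11]{DS23}, so there is no argument in the present text to compare yours against line by line. Judged on its own terms, your sketch follows the broad strategy one expects from that reference (produce explicit compact generators, verify the tensor property by a support computation, and reduce the converse to the affine Theorem \ref{AJS} by localisation), but several steps are not yet proofs.

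First, the generators $j_!K_x[-i]$ do not live in $\derive X$: for an open immersion $j$ the extension by zero of a quasi-coherent sheaf is not quasi-coherent, and $j^*$ has no left adjoint on the quasi-coherent derived category. What is actually needed (compare the appeal to \cite[Lemma 5.10]{DS23} in Proposition \ref{finite}) is a perfect complex on $X$ itself with prescribed closed support, obtained from Koszul complexes via Thomason--Trobaugh extension. Second, your definition $\varphi(i):=\bigcup_{U\in\cat U}\Supp H^i(U)$ is not visibly a Thomason filtration: the support of a quasi-coherent sheaf need not be specialisation-closed (e.g.\ $\Supp\Q=\{(0)\}$ in $\Spec\Z$), so ``specialisation-closure of each support set'' is precisely what has to be proved, using that $\cat U$ is generated by perfect complexes whose cohomology sheaves are coherent; alternatively one defines $\varphi$ from the compact generators only, but then $\cat U\subseteq\cat U_\varphi$ is no longer tautological and needs the same work. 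Third, you correctly flag the local-to-global step --- that a tensor aisle restricts to an aisle on each affine open with matching support data, equivalently that truncation commutes with $j^*$ --- as the crux, but you only assert it; this is the real content of \cite[\S 5]{DS23} (mirrored for the bounded case by Lemmas \ref{Key lemma}--\ref{localizes to open} here) and is exactly where the tensor hypothesis enters. A minor point: in your K\"unneth display the union should run over $i+j\geq k$ rather than $i+j=k$, since the Tor terms contribute below the top degree; the final inclusion into $\varphi(k)$ survives unchanged.
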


\begin{proof}
	\cite[Theorem 5.11]{DS23}
\end{proof}

\begin{definition}
We say a t-structure $(\cat U, \cat V)$ is \emph{principal} if it is generated by a single object of 
the derived category,  that is,  $\cat U = \langle E \rangle ^{\leq 0} _{\otimes}$ for some complex 
$E \in \derive X$.
\end{definition}

The following statement about principal compact t-structures easily follows from the methods 
developed in our earlier work.  We state and prove it here.

\begin{proposition}
\label{finite}
	Let $X$ be a Noetherian scheme.  There is a one-to-one correspondence between finite 
	length filtrations of closed subsets of $X$ and principal compact tensor t-structures of 
	$\derive X$.
\end{proposition}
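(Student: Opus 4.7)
The plan is to leverage Theorem \ref{Earlier Theorem}, which provides the bijection between compact tensor t-structures on $\derive X$ and Thomason filtrations on $X$. Under this bijection the task reduces to singling out which Thomason filtrations correspond to principal aisles, and I aim to show that these are exactly the finite length filtrations whose levels are closed.

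For the construction direction, given such a filtration $\varphi$ with non-empty levels $\varphi(i) = Z_i$ forming a chain $Z_a \supset Z_{a+1} \supset \cdots \supset Z_b$, I would produce a single perfect generator. By Thomason's classification of supports of perfect complexes, each closed $Z_i$ on a Noetherian scheme arises as the support of a perfect complex $K_i$, which one may arrange to have cohomology in non-positive degrees (for instance a Koszul complex on local generators of the ideal of $Z_i$, shifted suitably). Set
\[
E := \bigoplus_{i=a}^{b} K_i [-i],
\]
a finite direct sum of perfect complexes, hence perfect. A direct support computation shows each $K_i[-i] \in \cat U_\varphi$, so $E \in \cat U_\varphi$. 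Since aisles of t-structures are closed under direct summands (truncation functors being additive), each $K_i[-i]$ lies in $\langle E \rangle ^{\leq 0} _{\otimes}$. The proof of Theorem \ref{Earlier Theorem} shows that the family $\{K_i[-i]\}$ generates $\cat U_\varphi$ as a compact tensor aisle, yielding $\langle E \rangle ^{\leq 0} _{\otimes} = \cat U_\varphi$ and hence principality.

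For the converse direction, suppose $\cat U_\varphi = \langle E \rangle ^{\leq 0} _{\otimes}$ is a principal compact tensor aisle. Using $E \in \cat U_\varphi$ together with closure under positive shifts (so $E[k] \in \cat U_\varphi$ for all $k \geq 0$), one obtains $\varphi(i) \supset \bigcup_{j \geq i} \Supp(H^j(E))$. Conversely, the right-hand side defines a Thomason filtration $\psi$ on $X$ (supports of quasi-coherent sheaves on a Noetherian scheme are always specialization-closed), and by Theorem \ref{Earlier Theorem} the aisle $\cat U_\psi$ is a compact tensor aisle containing $E$; hence it contains $\langle E \rangle ^{\leq 0} _{\otimes} = \cat U_\varphi$, forcing $\varphi \leq \psi$ by the uniqueness half of Theorem \ref{Earlier Theorem}. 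Thus $\varphi(i) = \bigcup_{j \geq i} \Supp(H^j(E))$. To conclude that $\varphi$ is of finite length with closed levels, I would use that a compact aisle is generated by its compact objects; combined with principality this lets one replace $E$ by a perfect complex generating the same aisle, and the bounded coherent cohomology of such a perfect replacement gives both finite length and closedness of the $\varphi(i)$. Uniqueness of $\varphi$ is then immediate from Theorem \ref{Earlier Theorem}.

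The main obstacle is the replacement step in the converse: justifying that a principal generator of a compact tensor aisle can be chosen to be a perfect complex without enlarging the generated aisle. This is precisely where the hypotheses \emph{principal} and \emph{compact} have to be reconciled, and the author's remark that the proposition follows easily from the methods of the earlier work suggests that the machinery built for Theorem \ref{Earlier Theorem} handles this directly.
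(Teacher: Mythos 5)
Your overall strategy coincides with the paper's. For the direction producing a principal generator from a finite length filtration of closed subsets, the paper does exactly what you propose: it invokes \cite[Lemma 5.10]{DS23} to obtain, for each $n$, a perfect complex $E_n$ with $\Supp H^n(E_n) = \varphi(n)$ and $\Supp H^m(E_n) \subset \varphi(n)$ for all $m$, sets $E = \oplus_n E_n$, and concludes $\cat U_\varphi = \langle E \rangle^{\leq 0}_{\otimes}$ via \cite[Lemma 5.9]{DS23}; your Koszul-type construction $E = \oplus_i K_i[-i]$ is the same device. The one point of divergence is the step you flag as the main obstacle. The paper performs no replacement of a non-perfect generator by a perfect one: it reads the hypotheses ``principal and compact'' together as saying the aisle is generated by a single compact object, i.e.\ $\cat U = \langle E \rangle^{\leq 0}_{\otimes}$ with $E$ already a perfect complex. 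Once $E$ is perfect, the associated filtration $\varphi_{\cat U}(n) = \bigcup_{i \geq n} \Supp H^i(E)$ has closed levels (coherent cohomology of a perfect complex) and finite length (boundedness of $E$), which is precisely the computation you carry out. So the reconciliation you worry about --- whether a single-object generator and a set of compact generators can be combined into a single perfect generator --- is not argued in the paper either; it is absorbed into the intended meaning of ``principal compact.'' If one insists on reading the two adjectives as literally independent conditions (as the stated definitions technically permit), your concern is legitimate, and closing it would require an argument beyond the one-line assertion the paper gives; but as the proof is actually written, no such step is needed.
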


\begin{proof}
	One direction is easy.  Let $(\cat U, \cat V)$ be a t-structure on $\derive X$.  It is principal and 
	compact means $\cat U = \langle E \rangle ^{\leq 0} _{\otimes}$ for some perfect complex 
	$E$.  Recall that the Thomason filtration associated to $\cat U$ which we denote by 
	$\varphi_{\cat U}$ is defined as follows 
	$ \varphi_{\cat U} (n) = \bigcup_{i \geq n} \Supp H ^{i}(E)$.  Now since $E$ is perfect it is 
	clear that $ \varphi_{\cat U} (n) $ is closed, and boundedness of $E$ will imply that the 
	filtration is of finite length.
	
	Conversely,  suppose we are given with a finite length filtration of closed subsets say 
	$\varphi$,  that is, $\varphi (n) $ is a closed subset of $X$ for all $n \in \Z$.  From
	 \cite[Lemma 5.10]{DS23} we have  a perfect complex $E_n$ such that 
	$\Supp H ^n( E_n) = \varphi (n)$ and $\Supp H ^m( E_n) \subset \varphi (n)$ for any $m \in 
	\Z$.  
	Now consider  $E \colonequals \oplus E_n$.  Using \cite[Lemma 5.9]{DS23} we can see
	that  $\cat U _{\varphi} = \langle E \rangle ^{\leq 0} _{\otimes}$.  This proves that the 
	t-structure corresponding to a finite length filtration of closed sets is principal and compact.
	
\end{proof}

\section{Tensor t-structures on the bounded derived categories}

\subsection{Tensor t-structures on $\deriveb X$}

\hspace{1mm}

In this section we define the notion of tensor t-structures on the bounded derived category
$\deriveb X$.  But in this generality the derived tensor product on $\derive X$ does not restrict 
to the bounded derived category $\deriveb X$ unless $X$ is regular.  So we can not define it 
using the aisle of the standard t-structure $\mathbf{D}^ {b,  \leq 0} (X)$.  However,  
$\perfect {X}$ 
has a well defined action on $\deriveb X$.  Using this we define tensor t-structures on the 
derived category $\deriveb X$.  

\begin{definition}
\label{tensor definition}
	We say a preaisle $\cat U$ on $\deriveb X$ is a \tensor preaisle if for any $E \in 
	\perfectzero X$ 
	and any $ F \in \cat U$ the derived tensor product $E \otimes F \in \cat U$.  We say a 
	t-structure $(\cat U, \cat V)$ on $\deriveb X$ is a tensor t-structure if the aisle $\cat U$ is a 
	\tensor preaisle.  
\end{definition}  

\begin{remark}
\label{key remark}
	This definition seems to be more reasonable than an intrinsic definition.  In the case of 
	unbounded derived category $\derive X$, if one defines \tensor preaisle using 
	$\perfectzero X$,  then one can easily show that this new definition is equivalent to our 
	earlier definition using the aisle $\mathbf{D}^{\leq 0} _{qc} (X)$ of the standard t-structure.

\end{remark}

Now, with this definition of tensor t-structures on $\deriveb X$ we can prove that such 
t-structures restricts well to open affine subsets.  The arguments we used to show the 
analogous statements in the unbounded case, see \cite[\S 5.2]{DS23}, seems to work in the 
bounded case with appropriate modifications.  

Let $X$ be a Noetherian scheme and $U$ be an open affine subset of $X$ and $j : U \rar X$ 
denote the open immersion.  Let $\cat U$ be a \tensor preaisle of $\deriveb X$.  We define 
$\cat U|_U \colonequals \langle j^* \cat U \rangle ^{\leq 0}$ - the restriction of 
$\cat U$ to the open affine subset $U$.  Suppose $(\cat U , \cat V)$ is a tensor t-structure on 
$\deriveb X$.  We define $\cat V|_U \colonequals (\cat U|_U)^{\perp}$.

The key important lemma is the following:

\begin{lemma}
\label{Key lemma}
	Let $E$ and $F$ be two complexes in $\deriveb X$.  Suppose there is a non-zero morphism 
	$f \in \Hom (E|_U, F|_U) $ in $\deriveb U$.  Then, there is a perfect complex 
	$K \in \perfectzero X$ such that there is a non-zero map $\prim f \in \Hom (E \otimes K , F)$.
\end{lemma}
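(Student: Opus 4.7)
The plan is to combine the open-immersion adjunction with the projection formula and a perfect-complex approximation of $Rj_*\mathcal O_U$ built from Koszul complexes on the complement $Z = X \setminus U$.

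First, using the adjunction $j^* \dashv Rj_*$, I would rewrite $f \in \Hom_{\deriveb U}(E|_U, F|_U)$ as a non-zero morphism $\tilde f\colon E \to Rj_*\,j^*F$ in $\derive X$, and then apply the projection formula for the open immersion to identify $Rj_*\,j^*F \simeq F \otimes^L_{\mathcal O_X} Rj_*\mathcal O_U$.

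Second, I would present $Rj_*\mathcal O_U$ as a filtered colimit $\mathrm{colim}_n L_n$ of perfect complexes $L_n$ concentrated in non-negative degrees of uniformly bounded amplitude. Concretely, pick generators $f_1, \ldots, f_r$ of the ideal of $Z$ (locally on an affine cover of the Noetherian $X$ and patching globally), let $K_n$ denote the Koszul complex on $f_1^n, \ldots, f_r^n$, and set $L_n := \mathrm{cofib}(K_n^\vee \to \mathcal O_X)$; the triangle $R\Gamma_Z \mathcal O_X \to \mathcal O_X \to Rj_*\mathcal O_U$ together with the identification $\mathrm{colim}_n K_n^\vee \simeq R\Gamma_Z \mathcal O_X$ gives $\mathrm{colim}_n L_n \simeq Rj_*\mathcal O_U$, and hence $Rj_*\,j^*F \simeq \mathrm{colim}_n (F \otimes L_n)$.

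The crucial step is to factor $\tilde f$ through some $F \otimes L_n$, i.e., to establish
$\mathrm{colim}_n \Hom_{\derive X}(E, F \otimes L_n) \;\xrightarrow{\sim}\; \Hom_{\derive X}(E, \mathrm{colim}_n F \otimes L_n).$
Since $E$ need not be compact in $\derive X$ when $X$ is singular, compactness cannot be invoked directly. Instead I would use that each $\mathcal H^q(E)$ is a finitely presented $\mathcal O_X$-module (Noetherian hypothesis), that the $L_n$ have uniformly bounded amplitude, and the hypercohomology spectral sequence $E_2^{p,q} = \Ext^p(\mathcal H^{-q}(E), F \otimes L_n)$: each $E_2$-term commutes with filtered colimits by finite presentation of $\mathcal H^{-q}(E)$ and exactness of filtered colimits in quasi-coherent sheaves, while uniform boundedness lets one pass the colimit through the spectral sequence.

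Finally, for each perfect $L_n$ the adjunction yields $\Hom(E, F \otimes L_n) \simeq \Hom(E \otimes L_n^\vee, F)$. Set $K := L_n^\vee$ for the stage $n$ at which $\tilde f$ factors; since $L_n$ lies in non-negative degrees, $K$ lies in non-positive degrees, so $K \in \perfectzero X$, and the non-zero preimage of $\tilde f$ yields a non-zero morphism $f' \in \Hom(E \otimes K, F)$. The main obstacle is the Hom--colimit commutation of the third step, which is the technical heart of the argument; the Koszul globalization and the adjunction bookkeeping are standard manipulations.
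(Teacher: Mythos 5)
Your route is genuinely different from the paper's. The paper never forms $\mathbf{R}j_*j^*F$: it first applies the tensor--hom adjunction to rewrite $\Hom(j^*E,j^*F)$ as $\Hom(j^*\CO_X,\, j^*\prim F)$ with $\prim F = \mathbf{R}\cat H om^{\cdot}_X(E,F)$, so that the source object becomes $\CO_X$ --- a perfect, hence compact, object --- and then invokes \cite[Lemma 4.15]{DS23} to produce $K\in\perfectzero X$ with $\Hom(K,\prim F)\neq 0$; a second application of adjunction gives $\Hom(K\otimes E,F)\neq 0$. The point of this reduction is exactly to kill the step you correctly identify as the technical heart of your argument: once the source is compact, $\Hom(\CO_X,-)$ commutes with the homotopy colimits appearing in the Koszul/Thomason approximation, and no d\'evissage on the cohomology of $E$ is needed. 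Your approach instead pays for working with $E$ directly by having to prove a Hom--colimit commutation for a non-compact source. Your degree bookkeeping at the end is fine (one checks $L_n^{\vee}\simeq\mathrm{fib}(\CO_X\rar K_n)$ has cohomology in degrees $\leq 0$), and the adjunction/projection-formula steps are standard.

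Two points need more than you give them. First, the globalization of the Koszul presentation is a genuine gap as stated: on a general Noetherian scheme the ideal sheaf of $Z=X\setminus U$ need not be generated by global sections, so there are no global $f_1,\dots,f_r$ and ``patching globally'' hides a Mayer--Vietoris/Thomason-type induction over an affine cover (this is precisely the content packaged into \cite[Lemma 4.15]{DS23}, which is why the paper can cite it and stop). Your sketch is complete only in the affine case. Second, in the triangulated category the ``filtered colimit'' $\mathrm{colim}_n(F\otimes L_n)$ is a homotopy colimit, and commuting $\Hom(E,-)$ past it requires commuting $\Hom(E,-)$ past the infinite direct sums in the defining triangle, which fails for arbitrary non-compact $E$. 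Your proposed fix --- d\'evissage to $\Ext^p(\cat H^{-q}(E),-)$ with $\cat H^{-q}(E)$ coherent and targets of uniformly bounded amplitude --- is the right idea and does work on a Noetherian scheme, but it must actually be carried out: one has to handle convergence of a spectral sequence whose $E_2$-page is nonzero for unboundedly many $p$ when $X$ is singular, and check that both $\Ext^p(\cat M,-)$ for coherent $\cat M$ and sheaf cohomology on a Noetherian space commute with the relevant filtered colimits and direct sums. So the proposal is a viable alternative proof in outline, but the two steps above are where all the work lives, and the second is avoidable entirely by the paper's internal-Hom trick.
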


\begin{proof}
	By assumption we have
	$\Hom (j^*E, j^*F) \neq 0$.  By tensor-hom adjunction we get 
	
	\begin{align*}
	\Hom (j^*E, j^*F)
	& = \Hom (j^* \CO _X ,  \mathbf{R}  \cat H om ^{\cdot}_{U}(j^*E,  j^*F)) \\
	& = \Hom (j^* \CO _X ,  j^* \mathbf{R}  \cat H om ^{\cdot}_{X}(E,  F)).\\
	\end{align*}
	
	Now if we write $\prim F = \mathbf{R}  \cat H om ^{\cdot}_{X}(E,  F)$ then  $\Hom (j^*E, j^*F) \neq 0$
	implies the group $\Hom (j^* \CO_X ,  j^* \prim F) \neq 0$.  Applying 
	\cite[Lemma 4.15]{DS23} to the preaisle $\mathrm{Perf} ^{\leq N} _{Z} (X)$ where $N=0$ 
	and
	$Z=X$,  we get that there is a perfect complex $K \in \mathrm{Perf} ^{\leq 0} (X)$ such that
	$\Hom (K, \prim F) \neq 0$.  Again by tensor-hom adjunction 
	\begin{align*}
	\Hom (K,  \prim F)
	& = \Hom (K ,  \mathbf{R}  \cat H om ^{\cdot}_{X}(E,  F)) \\
	& = \Hom (K \otimes E,  F) \neq 0,\\
	\end{align*}

\end{proof}

\begin{lemma}
\label{j!}
	Let $\cat U$ be a \tensor preaisle of \deriveb X.   If $F \in (\cat U)^{\perp}$ then $j^*F \in 
	(\cat U|_U)^{\perp}$.  
\end{lemma}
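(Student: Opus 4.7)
The plan is to reduce the statement to a direct application of the Key Lemma (Lemma \ref{Key lemma}). By definition $\cat U|_U = \langle j^* \cat U \rangle^{\leq 0}$ is the smallest preaisle of $\deriveb U$ containing $j^* \cat U$, so it is obtained from the objects $j^* E$, $E \in \cat U$, by closing under positive shifts and extensions. Since the functor $\Hom(-, j^*F)$ is cohomological, extension closure is automatic: the set of objects $G \in \deriveb U$ with $\Hom(G, j^*F) = 0$ is closed under extensions (and shifts). So it suffices to verify $\Hom_U(j^*E[n], j^*F) = 0$ for every $E \in \cat U$ and every $n \geq 0$.

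Next I would reformulate this as $\Hom_U(j^*E, j^*F[-n]) = 0$. The key observation is that although we have not assumed $(\cat U, \cat U^\perp)$ is a t-structure, the axiom $\cat U[1] \subset \cat U$ (which holds for any preaisle) forces $\cat U^\perp$ to be closed under the shift $[-1]$: indeed $\Hom(U, F[-1]) = \Hom(U[1], F) = 0$ for $U \in \cat U$ and $F \in \cat U^\perp$. Thus $F[-n] \in \cat U^\perp$ for all $n \geq 0$, and it is enough to prove the single statement: for every $E \in \cat U$ and every $F' \in \cat U^\perp$, one has $\Hom_U(j^*E, j^*F') = 0$.

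The main step is now a clean application of Lemma \ref{Key lemma}. Assume for contradiction that there is a nonzero map $j^*E \to j^*F'$ in $\deriveb U$. The lemma produces a perfect complex $K \in \perfectzero X$ together with a nonzero morphism $E \otimes K \to F'$ in $\deriveb X$. But by hypothesis $\cat U$ is a $\otimes$-preaisle, and $E \in \cat U$, $K \in \perfectzero X$, so $E \otimes K \in \cat U$; this contradicts $F' \in \cat U^\perp$. Hence no such nonzero map exists and $j^*F \in (\cat U|_U)^\perp$.

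The main obstacle is not really present here, since Lemma \ref{Key lemma} does the heavy lifting of passing from the open set $U$ back to a global perfect object on $X$. The only subtle bookkeeping point, which I want to flag explicitly, is that $\cat U^\perp$ here refers to orthogonality inside $\deriveb X$ (not $\derive X$), so one must be careful that the shifts $F[-n]$ stay in $\deriveb X$; this is obvious, but worth remarking on so the reduction to Lemma \ref{Key lemma} is unambiguous.
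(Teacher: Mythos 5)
Your proof is correct and follows essentially the same route as the paper's: reduce to a nonvanishing $\Hom(j^*E, j^*F')$ with $E \in \cat U$, invoke Lemma \ref{Key lemma} to produce $K \in \perfectzero X$ with $\Hom(E \otimes K, F') \neq 0$, and contradict $F' \in \cat U^{\perp}$ via the tensor-preaisle hypothesis. The only difference is that you carefully spell out the reduction to the generators $j^*E[n]$ of $\cat U|_U = \langle j^*\cat U\rangle^{\leq 0}$ (using that $\cat U^{\perp}$ is closed under $[-1]$), a step the paper's proof leaves implicit.
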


\begin{proof}
	Suppose there is an $F \in (\cat U)^{\perp}$ such that $j^*F \notin
	(\cat U|_U)^{\perp}$ then there is an $E \in \cat U$ such that  
	$\Hom (j^*E, j^*F) \neq 0$.   Now, by the Lemma \ref{Key lemma} we have $K \in 
	\perfectzero X$
	such that $\Hom (E \otimes K , F) \neq 0$.   Since $\cat U$ is a tensor preaisle, 
	$K \otimes E \in \cat U$ and this contradicts the fact that $F \in (\cat U)^{\perp}$.

\end{proof}

 \begin{lemma}
 \label{tilde}
 	Let $X$ be a Noetherian scheme and $U$ be an open subscheme of $X$.  Let $A$ be in 
 	$\deriveb U$ then there exist $\tilde{A}$ in $\deriveb X$ such that $\tilde{A} |_U \cong A$.
 \end{lemma}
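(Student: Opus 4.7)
The plan is to obtain $\tilde{A}$ by combining a termwise direct image with a Noetherian approximation by coherent subsheaves. First I would represent $A$ by a bounded complex of quasi-coherent sheaves $A^{\bullet}$ on $U$, concentrated in degrees $[a,b]$. Since $X$ is Noetherian, the open immersion $j \colon U \rar X$ is quasi-compact, so $j_{*}$ sends quasi-coherent sheaves to quasi-coherent sheaves, and forming $B^{\bullet} \colonequals j_{*} A^{\bullet}$ termwise yields a bounded complex of quasi-coherent sheaves on $X$.

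The restriction behaviour is now automatic: because $j^{*}$ is exact and $j^{*} j_{*} \cong \mathrm{id}$ holds on quasi-coherent sheaves for an open immersion, one has $B^{\bullet}|_{U} = A^{\bullet}$ termwise, hence $B|_{U} \cong A$ in $\deriveb U$. If $\deriveb X$ is interpreted as bounded complexes of quasi-coherent sheaves, this already produces the required $\tilde{A} = B$.

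To arrive in the bounded derived category of coherent sheaves (equivalently the category of bounded complexes with coherent cohomology, which is the usage consistent with the rest of the paper), I would carve out a coherent subcomplex of $B^{\bullet}$. On a Noetherian scheme every quasi-coherent sheaf is the filtered union of its coherent subsheaves, so I would proceed by induction on $i$ from $a$ to $b$: pick any coherent $\tilde{A}^{a} \subset B^{a}$ with $\tilde{A}^{a}|_{U} = A^{a}$, and given $\tilde{A}^{i}$, pick a coherent $\tilde{A}^{i+1} \subset B^{i+1}$ that contains the coherent subsheaf $d(\tilde{A}^{i})$ and whose restriction to $U$ is $A^{i+1}$; such $\tilde{A}^{i+1}$ exists as the sum of $d(\tilde{A}^{i})$ with any coherent extension of $A^{i+1}$ inside $B^{i+1}$. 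The resulting $\tilde{A}^{\bullet}$ is a bounded coherent subcomplex of $B^{\bullet}$ whose restriction to $U$ equals $A^{\bullet}$.

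The only delicate step is the inductive enlargement producing coherent subsheaves that simultaneously contain $d(\tilde{A}^{i})$ and extend $A^{i+1}$; this rests on the basic fact that a finite sum of coherent subsheaves of a quasi-coherent sheaf on a Noetherian scheme is again coherent. The identity $d^{2} = 0$ on $\tilde{A}^{\bullet}$ is inherited from $B^{\bullet}$, so no extra verification is needed.
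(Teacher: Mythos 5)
Your proposal is correct in substance, but note that the paper does not actually prove this lemma -- it simply cites \cite[Corollary 2]{Bez10} -- so what you have written is a self-contained proof of the cited extension result, and it is essentially the standard argument behind that reference: termwise pushforward along $j$ followed by an inductive choice of coherent subsheaves containing the image of the differential. One point needs tightening. In your first paragraph you represent $A$ by a bounded complex of \emph{quasi-coherent} sheaves $A^{\bullet}$, but the inductive step requires a coherent subsheaf of $B^{i+1}=j_{*}A^{i+1}$ whose restriction to $U$ is \emph{all} of $A^{i+1}$; this is only possible if $A^{i+1}$ is itself coherent, since the restriction of a coherent sheaf is coherent. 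So you should first invoke the equivalence $\mathbf{D}^{b}(\mathrm{Coh}\, U)\simeq \mathbf{D}^{b}_{\mathrm{coh}}(U)$ for the Noetherian scheme $U$ to replace $A^{\bullet}$ by a quasi-isomorphic bounded complex with coherent terms; after that, the existence of a coherent $F\subset j_{*}A^{i+1}$ with $F|_{U}=A^{i+1}$ follows from writing $j_{*}A^{i+1}$ as the filtered union of its coherent subsheaves and using that $A^{i+1}$ is a Noetherian object, and your induction (taking $\tilde{A}^{i+1}=d(\tilde{A}^{i})+F$, a sum of two coherent subsheaves and hence coherent) goes through exactly as stated.
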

 
 \begin{proof}
	It is a special case of \cite[Corollary 2]{Bez10}.
 \end{proof}

\begin{lemma}
\label{localizes to open}
	Let $(\cat U , \cat V)$ be tensor t-structure on \deriveb X.  Then $(\cat U|_U,  \cat V|_U)$ 
	is 
	a t-structure on \deriveb U.  In particular,  for any $A \in \deriveb X$ the triangle 
	 \[ j^*\tau_{\cat U} ^{\leq} A \rar j^*A \rar j^*\tau _{\cat U} ^> A \rar  j^*\tau_{\cat U} 
	^{\leq} A[1],\] is a t-decomposition triangle of $j^*A$ in \deriveb U.
\end{lemma}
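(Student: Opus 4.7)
The plan is to verify the three t-structure axioms for $(\cat U|_U, \cat V|_U)$ on $\deriveb U$, and then to deduce the ``in particular'' clause by recognising $j^*$ of a truncation triangle as an instance of the truncation just produced.

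Axioms (t1) and (t2) are essentially formal. By construction $\cat U|_U = \langle j^*\cat U\rangle^{\leq 0}$ is a preaisle, hence $\cat U|_U[1]\subset\cat U|_U$; the right orthogonal $\cat V|_U = (\cat U|_U)^{\perp}$ is automatically closed under $[-1]$; and $\cat U|_U \perp \cat V|_U$ holds by definition. The substantive axiom is (t3): given $A\in \deriveb U$, I would first invoke Lemma~\ref{tilde} to produce a lift $\tilde A \in \deriveb X$ with $j^*\tilde A \cong A$, and then form the truncation triangle
\[ \tau^{\leq}_{\cat U}\tilde A \rar \tilde A \rar \tau^{>}_{\cat U}\tilde A \rar \tau^{\leq}_{\cat U}\tilde A[1] \]
in $\deriveb X$. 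Applying the exact functor $j^*$ yields a distinguished triangle in $\deriveb U$ with middle term $A$, whose left term lies in $\cat U|_U$ by the very definition of the restriction, and whose right term lies in $\cat V|_U$ by Lemma~\ref{j!}. This is the desired truncation decomposition.

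For the ``in particular'' clause, one takes $A \in \deriveb X$ directly (no lift needed) and applies $j^*$ to its truncation triangle in $\deriveb X$. The previous paragraph's argument shows that the resulting triangle $j^*\tau^{\leq}_{\cat U}A \rar j^*A \rar j^*\tau^{>}_{\cat U}A$ in $\deriveb U$ has outer terms in $\cat U|_U$ and $\cat V|_U$ respectively. By uniqueness of truncation decompositions in a t-structure, this triangle must coincide with the truncation of $j^*A$ for the restricted t-structure $(\cat U|_U,\cat V|_U)$.

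The main obstacle is really the orthogonality passage $j^*\cat V \subset (\cat U|_U)^{\perp}$, which is exactly what Lemma~\ref{j!} delivers; that lemma in turn rests on the tensor--hom manipulation of Lemma~\ref{Key lemma} together with the tensor-preaisle hypothesis on $\cat U$. Once those are in hand, the proof of Lemma~\ref{localizes to open} reduces to the formal combination of the three preceding lemmas.
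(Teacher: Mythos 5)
Your proposal is correct and follows essentially the same route as the paper: lift $A$ via Lemma~\ref{tilde}, apply $j^*$ to the truncation triangle in $\deriveb X$, and identify the outer terms using the definition of $\cat U|_U$ and Lemma~\ref{j!}. Your explicit verification of (t1)--(t2) and the uniqueness argument for the ``in particular'' clause are details the paper leaves implicit, but the substance is identical.
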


\begin{proof}
	To show $(\cat U|_U,  \cat V|_U)$  is a t-structure on $\deriveb U$ it is enough to
	show that for any $T\in \deriveb U$ there is a t-decomposition triangle.
	
	Let $T \in \deriveb U$.  Consider any extension $\tilde{T}$ of $T$ in $\deriveb X$ as in 
	Lemma \ref{tilde}.  Now, since $(\cat U , \cat V)$ is a t-structure on $\deriveb X$ we have
	a t-decomposition in $\deriveb X$ say, 
	\[ \tau_{\cat U} ^{\leq} \tilde{T} \rar \tilde{T} \rar \tau _{\cat U} ^> \tilde{T} \rar  \tau_{\cat U} ^{\leq} \tilde{T}[1].\] 
	
	By the construction of $\cat U |_U$, we have $j^* \tau_{\cat U} ^{\leq} \tilde{T} $ is in  $\cat 
	U |_U
	$, and by our Lemma \ref{j!}, we have $j^* \tau _{\cat U} ^> \tilde{T}$ is in $(\cat U|
	_U)^{\perp}$.
	Therefore, applying $j^*$ to the above triangle give us   
	
	\[ j^*\tau_{\cat U} ^{\leq} \tilde{T} \rar T \rar j^* \tau _{\cat U} ^> \tilde{T} \rar  j^*\tau_{\cat 
	U} ^{\leq} \tilde{T}[1],\] 
	
	which is a t-decomposition triangle for $T$ in $\deriveb U$.

\end{proof}

\begin{lemma}
\label{adjoint restricts}
	Let $\cat U$ be an \tensor aisle of $\deriveb X$ and $\tau _X$ is the truncation functor for 
	$\cat U$.  From the Lemma \ref{localizes to open} we have that $\cat U |_U$ is an aisle 
	of $\deriveb U$.   If $\tau _U$ is the truncation functor for $\cat U |_U$ in $\deriveb U$, 
	then we have the following isomorphism for any $A \in \deriveb X$, 
	 
	 \[ \tau _X (A) |_U \cong \tau _U (A|_U). \]
\end{lemma}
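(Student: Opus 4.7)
The plan is to reduce this to uniqueness of truncation triangles, reusing the argument already developed in the proof of Lemma \ref{localizes to open}. Given $A \in \deriveb X$, I would start with its t-decomposition triangle in $\deriveb X$ with respect to $(\cat U, \cat V)$:
\[ \tau_X(A) \rar A \rar \tau_X^>(A) \rar \tau_X(A)[1], \]
where $\tau_X(A) \in \cat U$ and $\tau_X^>(A) \in \cat V = \cat U^{\perp}$. Applying the exact functor $j^*$ gives a distinguished triangle
\[ j^*\tau_X(A) \rar j^*A \rar j^*\tau_X^>(A) \rar j^*\tau_X(A)[1] \]
in $\deriveb U$.

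Next, I would verify the two membership conditions that identify this as a truncation triangle for the t-structure $(\cat U|_U, \cat V|_U)$ on $\deriveb U$. By the very definition $\cat U|_U = \langle j^*\cat U \rangle^{\leq 0}$, we have $j^*\tau_X(A) \in \cat U|_U$. On the other hand, since $\tau_X^>(A) \in \cat V = \cat U^{\perp}$, Lemma \ref{j!} yields $j^*\tau_X^>(A) \in (\cat U|_U)^{\perp} = \cat V|_U$. Hence the displayed triangle is indeed a t-decomposition triangle for $j^*A = A|_U$ with respect to the restricted t-structure (whose existence and form were established in Lemma \ref{localizes to open}).

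Finally, I would invoke uniqueness of truncation triangles in a t-structure: any two t-decomposition triangles of the same object are canonically isomorphic. Comparing with the triangle
\[ \tau_U(A|_U) \rar A|_U \rar \tau_U^>(A|_U) \rar \tau_U(A|_U)[1], \]
we conclude $\tau_U(A|_U) \cong \tau_X(A)|_U$ (and simultaneously $\tau_U^>(A|_U) \cong \tau_X^>(A)|_U$), as required.

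There is essentially no serious obstacle here: the two nontrivial inputs, namely that $j^*$ sends $\cat U$ into $\cat U|_U$ and $\cat V$ into $\cat V|_U$, have already been proved in the excerpt (the first by definition, the second via Lemma \ref{j!} which in turn relied on Lemma \ref{Key lemma} and the tensor hypothesis on $\cat U$). The only point requiring a slight care is making sure one is applying uniqueness inside $\deriveb U$, which is automatic once one knows $(\cat U|_U, \cat V|_U)$ is a t-structure there.
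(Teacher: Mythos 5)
Your proposal is correct and follows essentially the same route as the paper: the paper's proof simply cites Lemma \ref{localizes to open}, whose ``in particular'' clause already states that restricting the truncation triangle of $A$ yields a t-decomposition of $j^*A$ (established exactly as you do, via the definition of $\cat U|_U$ and Lemma \ref{j!}), after which uniqueness of truncation triangles gives the isomorphism. Nothing is missing.
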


\begin{proof}
	This is immediate from Lemma \ref{localizes to open} and the definition of truncation 
	functor associated to an aisle.
\end{proof}

\begin{remark}
\label{U remark}
	Exact same statement as in Lemma \ref{adjoint restricts} also holds in the 
	unbounded case.  Following \cite[Lemma 5.6]{DS23},  we have,  for any tensor aisle $\cat U$ 
	in $\derive X$, and for $A \in \derive X$, 
	 \[ \tau _X (A) |_U \cong \tau _U (A|_U). \] We will use this in Theorem \ref{Main theorem}.
\end{remark}

\begin{lemma}
\label{Support lemma}
	Let $E$ and $F$ be two complexes in $\deriveb X$.  If $\Supp H^k(E) \subset  
	\cup _{i \geq k} \Supp  ( H^i (F))$ then $E \in \tpreaisle F$,  where $\tpreaisle -$ is taken in 
	$\derive X$.
\end{lemma}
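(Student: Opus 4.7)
My plan is to translate the support hypothesis into aisle-membership coming from a Thomason filtration, and then transfer that membership to $\tpreaisle F$ by reducing, step by step, to a single atomic support-matching case.

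Set $\varphi(k) \colonequals \bigcup_{i \geq k} \Supp H^i(F)$. Each $\varphi(k)$ is specialization-closed (being a union of supports of quasi-coherent sheaves) and the family is decreasing in $k$, so $\varphi$ is a Thomason filtration on $X$; the hypothesis becomes $E \in \cat U_\varphi$. Tautologically $F \in \cat U_\varphi$, so $\tpreaisle F \subset \cat U_\varphi$ is automatic, and the real content is the reverse containment for the bounded object $E$.

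I would run an induction on the amplitude of $E$. Let $b$ be the largest integer with $H^b(E) \neq 0$; the distinguished triangle
\[ \tau^{<b} E \rar E \rar H^b(E)[-b] \rar \tau^{<b} E[1] \]
together with closure of $\tpreaisle F$ under extensions reduces the problem to $\tau^{<b} E$ (strictly smaller amplitude, same support bound) and to $H^b(E)[-b]$. Iterating, it suffices to establish the sheaf-level statement: for any quasi-coherent sheaf $G$ with $\Supp G \subset \varphi(k)$, one has $G[-k] \in \tpreaisle F$. Because quasi-coherent sheaves supported in a given specialization-closed set form a Serre subcategory, and $\tpreaisle F$ is closed under filtered colimits (via telescopes built from coproducts and extensions), I would filter $G$ by its coherent subsheaves and stratify by the sets $\Supp H^i(F)$ with $i \geq k$ to further reduce to the atomic case $\Supp G \subset \Supp H^i(F)$ for a single $i \geq k$.

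The main obstacle is this atomic case. My plan is to build an object $A \in \mathbf{D}^{\leq 0}_{qc}(X)$ (concentrated essentially in degree $k - i$ and carrying $G$ as its relevant cohomology) so that $F \otimes^{\mathbf{L}} A$ exhibits $G$ as the $k$-th cohomology via the $\mathrm{Tor}_0$ contribution of $H^i(F)$, while the hypertor spectral sequence keeps all remaining cohomologies of $F \otimes^{\mathbf{L}} A$ supported inside $\varphi$; these extra cohomologies, lying in lower degrees and respecting the right support bounds, are then absorbed by the outer amplitude induction. The technical input I would draw on is \cite[Lemma 5.9]{DS23}, which carries out exactly this kind of support-to-preaisle passage in the perfect-complex setting behind Proposition \ref{finite}; the adaptation to the present bounded, not-necessarily-perfect $F$ is where most of the care will be needed.
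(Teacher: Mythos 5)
Your opening reductions are fine: passing to the Thomason filtration $\varphi(k)=\bigcup_{i\geq k}\Supp H^i(F)$, running the truncation-triangle induction on the amplitude of $E$ (extension-closure of the preaisle handles this), and stratifying a coherent sheaf $G$ by the finitely many closed sets $\Supp H^i(F)$ via torsion subsheaves. But all of this only relocates the difficulty: the ``atomic case'' you end up with is where the entire content of the lemma lives, and the construction you propose for it fails. You want $A\in\mathbf{D}^{\leq 0}_{qc}(X)$ so that $G$ appears as $H^k(F\otimes A)$ through the $\mathrm{Tor}_0$-contribution $H^i(F)\otimes H^{k-i}(A)$. Any sheaf obtained this way is a quotient of $H^i(F)\otimes(-)$ and is therefore annihilated by the annihilator of $H^i(F)$. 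Take $X=\Spec k[x]$, $H^i(F)=k[x]/(x)$ and $G=k[x]/(x^2)$: both have the same support, but $G$ is not killed by $x$, so no choice of $A$ exhibits $G$ as such a $\mathrm{Tor}_0$. Producing an arbitrary sheaf with the prescribed support inside $\tpreaisle F$ genuinely requires the cocompleteness of the preaisle (Koszul-type objects, telescopes computing local cohomology, direct limits), not a single tensoring; this is precisely what the technical results you gesture at ([DS23, Lemma 5.9], and in the affine case Hrbek's Proposition 5.1) spend their effort on.

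There is a second, structural problem. Even if you arranged $H^k(F\otimes A)\cong G$, to extract $G[-k]$ from $F\otimes A$ you must first know $\tau^{<k}(F\otimes A)\in\tpreaisle F$, so that $G[-k]$ is a cone of a map between objects of the preaisle. Since $F$ is only in $\deriveb X$ and need not be perfect, $F\otimes A$ can have cohomology in arbitrarily negative degrees; your ``outer amplitude induction'' runs on the finite amplitude of $E$ and cannot absorb this tail, and attempting to absorb it degree by degree makes the induction circular once $\varphi$ stabilizes (the claim in degree $k$ requires the claim in all degrees $<k$, which in turn requires degree $k$ again after shifting). For contrast, the paper avoids the atomic case entirely: it shows $\tpreaisle F$ has Thomason associated filtration, hence is a tensor aisle; tensor aisles restrict to aisles on affine opens; on each affine the statement is exactly \cite[Proposition 5.1]{Hrbek}; and membership in the aisle is checked locally. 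If you want a self-contained argument, the piece you must supply is an analogue of that affine proposition, and it is not a one-step construction.
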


\begin{proof}
	Let us denote the \tensor preaisle $ \tpreaisle F$ by $\cat U ^{F}$.  By
	 \cite[Lemma 5.2]{DS23} and the fact that $F \in \deriveb X$,  we get that the associated 
	 filtration of $\cat U ^F$ is Thomason.  Hence,  by the Theorem \ref{Earlier Theorem} we have
	  $\cat U ^F$ is a \tensor aisle.  Now, since \tensor aisles restrict to aisle on open affine 
	  subsets by \cite[Lemma 5.6]{DS23}, $\cat U ^{F} |_U$ is an aisle in $\derive U$,  for any 
	  affine open subset $U$ of $X$.  We can then apply \cite[Proposition 5.1]{Hrbek} to 
	  $E |_U$ and $F |_U$,  to conclude that  $E |_U \in \cat U ^{F} |_U$.  Note that,  this is 
	  enough to show that $E \in \cat U ^F$.
\end{proof}

 Recall that given  a Thomason filtration $\varphi$ on $X$ one can take the associated 
preaisle $\cat U _{\varphi}$ in the unbounded derived category $\derive X$,  which is a 
compact aisle.  
 
 \begin{proposition}
 \label{phi lemma}
	Let $X$ be a Noetherian scheme and $\cat U$ be a \tensor preaisle on
	$ \mathbf{D}^{b}(X)$.  Then, there is a unique Thomason filtration $\varphi$ on $X$ such 
	that $\cat U = \cat U_{\varphi} \cap  \mathbf{D}^{b}(X)$.
	
\end{proposition}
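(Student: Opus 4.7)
The plan is to let $\cat U$ determine $\varphi$ via the natural formula
\[
\varphi(n) \colonequals \bigcup_{E \in \cat U} \Supp H^n(E),
\]
and then to verify the two inclusions in $\cat U = \cat U_{\varphi} \cap \deriveb X$. The check that $\varphi$ is a Thomason filtration is immediate: each $\varphi(n)$ is specialisation-closed as a union of closed supports, and the monotonicity $\varphi(n+1) \subseteq \varphi(n)$ follows from the closure of $\cat U$ under positive shifts combined with $\Supp H^n(E[1]) = \Supp H^{n+1}(E)$. Uniqueness of such $\varphi$ is automatic, since the formula already reads $\varphi$ off of $\cat U$. The inclusion $\cat U \subseteq \cat U_{\varphi} \cap \deriveb X$ is then tautological from the construction.

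For the reverse inclusion, I would fix $A \in \cat U_{\varphi} \cap \deriveb X$ and produce a single $F \in \cat U$ whose cohomology supports dominate those of $A$. Only finitely many $H^k(A)$ are nonzero, each coherent with closed support inside $\varphi(k)$. Since $X$ is Noetherian, each such closed subset has only finitely many irreducible components, so $\Supp H^k(A)$ can be covered by finitely many sets of the form $\Supp H^k(E_{k,j})$ with $E_{k,j} \in \cat U$, picking one generator per generic point. The finite direct sum $F \colonequals \bigoplus_{k,j} E_{k,j}$ again lies in $\cat U$ (preaisles being closed under extensions and summands) and satisfies $\Supp H^k(A) \subseteq \Supp H^k(F) \subseteq \bigcup_{i \geq k} \Supp H^i(F)$ for every $k$. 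Lemma \ref{Support lemma} then yields $A \in \tpreaisle F$, with the tensor preaisle computed in $\derive X$.

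The main obstacle is to bridge this last conclusion back to $\cat U \subseteq \deriveb X$. In the proof of Lemma \ref{Support lemma} the class $\tpreaisle F$ in $\derive X$ is identified with the tensor aisle $\cat U_{\varphi_F}$ for the filtration $\varphi_F(n) = \bigcup_{i \geq n} \Supp H^i(F)$, and as such it is a priori closed under arbitrary direct sums that are unavailable in $\deriveb X$. Because $F$ is bounded, however, $\varphi_F$ is a finite-length filtration of closed subsets, so Proposition \ref{finite} realises $\cat U_{\varphi_F}$ as principal, generated by an explicit perfect complex built via Lemma 5.10 of \cite{DS23}. Working with this principal description, the strategy is to argue that any bounded object of $\cat U_{\varphi_F}$, in particular $A$, is produced from $F$ by finitely many shifts, extensions, summands and tensor products with complexes in $\perfectzero X$, each of which preserves $\cat U$. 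Making this finite bounded construction explicit, without invoking any infinite direct sum closure, amounts to proving the principal case of the proposition and is the technical heart of the argument.
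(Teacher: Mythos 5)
Your construction of $\varphi$, the check that it is a Thomason filtration, the tautological inclusion $\cat U \subseteq \cat U_{\varphi} \cap \deriveb X$, and the reduction of the reverse inclusion to a single finite direct sum $F = \oplus E_{k,j} \in \cat U$ chosen via generic points of irreducible components, followed by an appeal to Lemma \ref{Support lemma} --- all of this is exactly the paper's argument. Two small repairs: the intermediate inclusion $\Supp H^k(A) \subseteq \Supp H^k(F)$ need not hold as written, since a generic point of $\Supp H^k(A)$ may only lie in $\Supp H^i(E)$ for some $i > k$; either replace $E$ by $E[i-k]$ or work directly with $\bigcup_{i \geq k}\Supp H^i(F)$, which is all the lemma needs. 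And uniqueness is not quite ``automatic'': you must also rule out that a different Thomason $\psi$ gives the same intersection, i.e.\ show $\psi(n) \subseteq \varphi(n)$, which requires exhibiting for each $x \in \psi(n)$ a bounded witness such as $\CO_{\bar{\{x\}}}[-n] \in \cat U_{\psi} \cap \deriveb X$ with $x$ in the support of its $n$-th cohomology.

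Where you diverge from the paper is only at the very end, and that is where your proposal stops being a proof. The paper concludes $A \in \cat U$ directly from Lemma \ref{Support lemma}: the ``finite bounded construction'' you are worried about is precisely what that lemma (through its affine input, \cite[Proposition 5.1]{Hrbek}) is meant to deliver, and it is not re-derived afterwards. If you grant the lemma in the form the paper uses it, your argument is already complete and your third paragraph is superfluous. If, on the other hand, you insist on reading $\tpreaisle F$ as the cocomplete tensor preaisle of $\derive X$, then the concern is legitimate but your proposed repair does not close it: Proposition \ref{finite} only says that $\cat U_{\varphi_F}$ is generated by some perfect complex $E'$ manufactured from Lemma 5.10 of \cite{DS23}, and that $E'$ has no reason to belong to $\cat U$, so principality gives you no mechanism for transporting membership in $\cat U_{\varphi_F}$ back into $\cat U$. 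As submitted, the final step is a plan rather than an argument; the fix is simply to use Lemma \ref{Support lemma} at the strength the paper intends, not to detour through principal aisles.
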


\begin{proof}
	Let $\cat U$ be a \tensor preaisle of $\deriveb X$.  We can take smallest cocomplete 
	preaisle 
	generated by the objects of $\cat U$ in $\derive X$,  that is,  $\preaisle U$.  Since $\cat U$
	is \tensor preaisle,  $\preaisle U$ is also \tensor preaisle of $\derive X$.  Using the definition 
	of
	graded support \cite{DS23},  and the Lemma 5.2 loc. cit,  we can easily conclude that
	the corresponding filtration of $\preaisle U$ is a Thomason filtration, say $\varphi$ and 
	since $\preaisle U$ is a \tensor preaisle we have $\preaisle U$ = $\cat U _{\varphi}$.
	
	It is clear that $\cat U \subset \cat U _{\varphi} \cap \deriveb X$.  We need to show the 
	reverse inclusion.  Suppose $E \in \cat U _{\varphi} \cap \deriveb X$,  by the construction of 
	$\cat U_{\varphi}$ above we have a collection of objects $B_{\alpha} \in \cat U$ such
	that $\Supp ^{\geq i} E \subset \cup _{i} \Supp ^{\geq i} B_{\alpha} $.  Since $\Supp ^{\geq i}
	E$ is a closed subset of $X$ we can consider the generic points of its irreducible 
	components and the $B_{\alpha}$'s containing the generic points, therefore, there are only 
	finitely many $B_{\alpha}$'s such that $ \Supp ^{\geq i} E \subset \cup _{i} \Supp ^{\geq i} 
	B_{\alpha} $.  Now, we apply the Lemma \ref{Support lemma} to conclude our claim.

\end{proof}

\subsection{Noetherian schemes admitting dualising complexes}

\hspace{1mm}

For a Noetherian ring $R$,  in \cite[Corollary 4.5]{AJS10},  the authors show if a compact 
t-structure $(\cat U,  \cat V)$ on the unbounded derived category $\mathbf{D} (R)$ restrict to a 
t-structure on the bounded derived category $\deriveb R$ then the corresponding Thomason 
filtration satisfies an additional condition, which the authors call weak Cousin condition.  And
in the presence of a dualizing complex in \cite[Theorem 6.9]{AJS10} the authors have shown 
that the filtration being weak-Cousin is necessary and sufficient. 

The goal of the section is to generalise the above theorem for Noetherian rings to schemes as
well as to show that every tensor t-structure on $\deriveb X$ arises in this way.   Let us first 
recall the definition of weak Cousin condition given in \cite[\S 4]{AJS10}.

\begin{definition}
	Let $R$ be a Noetherian ring.  We denote $X \colonequals \Spec R$. 
	Let $\varphi : \Z \rar 2^X$ be a Thomason filtration.  We say that $\varphi$ satisfies the weak
	Cousin condition: if for every $j \in \Z$,  if $\mathfrak{p} \subsetneq \mathfrak{q}$,  with
	$\mathfrak{p}$ maximal under $\mathfrak{q}$ and $\mathfrak{q} \in \varphi (j)$ then
	$\mathfrak{p} \in \varphi (j-1)$.
\end{definition}

Let $X$ be a Noetherian scheme of finite Krull dimension.  We modify the above definition
suitably and call such filtrations Thomason-Cousin instead of weak-Cousin so as to
retain Thomason's name and highlight his seminal contribution in \cite{Thomason}.

\begin{definition}
\label{Thomason-Cousin}
	We say an ordered pair $(y,x)$ of points of $X$ is a \emph{specialization pair} of $X$ if 
	$y \in \bar{\{x\}}$.  Given a specialization pair $(y,x)$ we define the codimension function as 
	$\delta _X (y,x) \colonequals codim( \bar{\{y\}},  \bar{\{x\}}) =\mathrm{dim } \CO_{y, X} - 
	\mathrm{dim } \CO _{x, X} $. 
	
	We say a Thomason filtration is \emph{Thomason-Cousin} if  for any specialization pair 
	$(y,x)$,  if $y \in \varphi(i)$ then $x \in \varphi (i - \delta _X (y,x) )$. 
\end{definition}

Note that a Thomason-Cousin filtration in this case is a finite length filtration,  more precisely
for $i \ll 0$ we have $\varphi (i) = X$ and for $j\gg 0$ we have $\varphi (j)= \emptyset $.

After stating it in the affine case we will extend it to schemes.

\begin{theorem}
\label{main affine theorem}
	Let $R$ be a Noetherian ring admitting  a dualizing complex.  Let $\cat U$ be a preaisle of 
	$\deriveb R$ and $\varphi$ its associated filtration as in \ref{phi lemma}. Then, $\cat U$ is an 
	aisle of $\deriveb R$ if and only if $\varphi$ is Thomason-Cousin.
\end{theorem}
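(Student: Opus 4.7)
My plan is to deduce the theorem from \cite[Theorem 6.9]{AJS10} via Proposition \ref{phi lemma}, which matches preaisles of $\deriveb R$ with Thomason filtrations on $\Spec R$. Before running the argument, I would verify that in the affine case the Thomason-Cousin condition of Definition \ref{Thomason-Cousin} coincides with the weak Cousin condition of \cite[\S 4]{AJS10}. One direction specialises Thomason-Cousin to $\delta = 1$; the other iterates weak Cousin along a saturated chain of specializations, which has the expected length because $R$, admitting a dualizing complex, is catenary.

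For the forward direction, I would assume $\cat U$ is an aisle of $\deriveb R$ with associated Thomason filtration $\varphi$, so that $\cat U = \cat U_\varphi \cap \deriveb R$. The unbounded preaisle $\cat U_\varphi$ is a compactly generated aisle of $\derive R$ by Theorem \ref{Earlier Theorem}. For any $A \in \deriveb R$, the truncation of $A$ with respect to $\cat U_\varphi$ in $\derive R$ must agree, by uniqueness, with its truncation with respect to $\cat U$ in $\deriveb R$, and the latter is bounded. Hence the compactly generated t-structure $(\cat U_\varphi, \cat U_\varphi^\perp)$ restricts to $\deriveb R$ in the sense of \cite{AJS10}, and \cite[Corollary 4.5]{AJS10} yields the weak Cousin, and therefore Thomason-Cousin, property of $\varphi$.

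For the backward direction, I would start with a Thomason-Cousin filtration $\varphi$ and invoke \cite[Theorem 6.9]{AJS10}: since $R$ admits a dualizing complex, $(\cat U_\varphi, \cat U_\varphi^\perp)$ restricts to a t-structure on $\deriveb R$ whose aisle is precisely $\cat U_\varphi \cap \deriveb R = \cat U$. Thus $\cat U$ is an aisle of $\deriveb R$.

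The heart of the matter, namely the construction of bounded truncation functors from a Cousin-type filtration using the Cousin complex built from the dualizing complex, is already subsumed in \cite[Theorem 6.9]{AJS10}; that is also where the dualizing complex hypothesis is essential and where I expect the main obstacle to lie. Once the equivalence of the two Cousin-type conditions is in place, the statement above is essentially a reformulation of that theorem in the language of preaisles.
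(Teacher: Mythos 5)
Your proposal is correct and follows essentially the same route as the paper, which proves this statement simply by citing \cite[Theorem 6.9]{AJS10}; your additional care in matching the Thomason-Cousin condition with the weak Cousin condition of \cite[\S 4]{AJS10} (using catenarity from the dualizing complex) and in invoking \cite[Corollary 4.5]{AJS10} for the forward direction only makes explicit what the paper leaves implicit.
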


\begin{proof}
	\cite[Theorem 6.9]{AJS10}
\end{proof}

\begin{theorem}
\label{Main theorem}
	Let $X$ be a Noetherian scheme admitting a dualizing complex.  Let $\cat U$ be a \tensor 
	preaisle of $\mathbf{D}^b(X)$ and $\varphi$ its associated filtration as in \ref{phi lemma}.  
	Then,  $\cat U$ is an 
	aisle if and only if $\varphi$ is Thomason-Cousin.
\end{theorem}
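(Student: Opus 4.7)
My plan is to reduce the theorem to its affine analogue, Theorem \ref{main affine theorem}, by working locally on an affine open cover and using the unbounded aisle $\cat U_\varphi$ provided by Theorem \ref{Earlier Theorem} as a ``bookkeeping device.'' Throughout, note that the Thomason-Cousin condition on $\varphi$ is local on $X$ in the sense that it only involves specialization pairs, so $\varphi$ is Thomason-Cousin if and only if $\varphi|_U$ is Thomason-Cousin for every affine open $U \subset X$; note also that a dualizing complex on $X$ restricts to a dualizing complex on each open $U$.

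For the forward implication, suppose $\cat U$ is an aisle of $\deriveb X$. For any affine open $U \subset X$, Lemma \ref{localizes to open} shows that $\cat U|_U$ is an aisle of $\deriveb U$. The first sublemma to verify is that the Thomason filtration of $U$ associated to $\cat U|_U$ (via Proposition \ref{phi lemma}) is precisely $\varphi|_U$; this is routine from the definition of $\cat U|_U$ as $\langle j^* \cat U\rangle^{\leq 0}$ and the construction of the associated filtration via graded supports in the proof of \ref{phi lemma}. Given this, the affine Theorem \ref{main affine theorem} immediately yields that $\varphi|_U$ is Thomason-Cousin, and by locality $\varphi$ itself is Thomason-Cousin.

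For the converse, the main work is producing the truncation triangle. Fix a Thomason-Cousin $\varphi$; by Theorem \ref{Earlier Theorem} the pair $(\cat U_\varphi, \cat U_\varphi^{\perp})$ is a (compact) tensor t-structure on $\derive X$. Given $A \in \deriveb X$, take its truncation triangle in $\derive X$:
\[ \tau^{\leq}_\varphi A \longrightarrow A \longrightarrow \tau^{>}_\varphi A \longrightarrow \tau^{\leq}_\varphi A[1]. \]
The crucial claim is that both outer terms lie in $\deriveb X$. Boundedness is local on $X$, so it suffices to check it after restricting to any affine open $U$. By Remark \ref{U remark}, $(\tau^{\leq}_\varphi A)|_U \cong \tau^{\leq}_{\varphi|_U}(A|_U)$, and since $\varphi|_U$ is Thomason-Cousin, the affine Theorem \ref{main affine theorem} guarantees that $\cat U_{\varphi|_U} \cap \deriveb U$ is an aisle of $\deriveb U$, which forces its truncation functor to preserve $\deriveb U$. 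Hence $(\tau^{\leq}_\varphi A)|_U \in \deriveb U$ for every affine $U$, so $\tau^{\leq}_\varphi A \in \deriveb X$, and then $\tau^{>}_\varphi A \in \deriveb X$ by the triangle. This produces a t-decomposition of $A$ inside $\deriveb X$; combined with orthogonality inherited from $\derive X$, this shows $\cat U = \cat U_\varphi \cap \deriveb X$ is an aisle.

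The main obstacle I anticipate is a bookkeeping issue rather than a conceptual one: matching the filtration $\varphi|_U$ that one naturally assigns to $\cat U|_U$ on the affine open $U$ with the restriction of the global filtration $\varphi$ attached to $\cat U$, and, in the converse direction, verifying that the truncation functor in $\derive U$ provided by the affine classification actually coincides with the restriction of the unbounded truncation in $\derive X$. Both compatibilities are addressed by Proposition \ref{phi lemma} and Remark \ref{U remark}, so once these identifications are nailed down, the proof is essentially a gluing argument that lifts the affine result \ref{main affine theorem} to the scheme setting.
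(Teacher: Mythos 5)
Your proposal is correct and follows essentially the same route as the paper: the forward direction restricts to affine opens via Lemma \ref{localizes to open} and invokes Theorem \ref{main affine theorem}, and the converse shows the unbounded truncation functor $\tau_\varphi$ of Theorem \ref{Earlier Theorem} preserves $\deriveb X$ by checking boundedness affine-locally through Remark \ref{U remark}. The compatibility issues you flag (matching $\varphi|_U$ with the filtration of $\cat U|_U$, and the truncation functors under restriction) are handled in the paper exactly as you indicate, via Proposition \ref{phi lemma} and Remark \ref{U remark}.
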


\begin{proof}
	First, we will show, if $\cat U$ is an aisle then $\varphi$ is a Thomason-Cousin filtration.  
	
	Let $(y,x)$ be a specialization pair.  Clearly there is an open affine subset $U$ containing 
	both $y$ and $x$.   We can now restrict the aisle $\cat U$ to the open affine subset $U$,
	that is $\cat U |_U$,  which is an aisle on $\deriveb U$ by Lemma \ref{localizes to open}.  The 
	corresponding filtration of $\cat U |_{U}$ is $\varphi |_{U} $.  
	Now since $\cat U |_{U}$ is an aisle,  from Theorem \ref{main affine theorem} we get that 
	$\varphi |_{U} $ is a Thomason-Cousin filtration on $U$.  Since this is true for all 
	pair of specialization points we get that $\varphi$ is Thomason-Cousin.  
	
	Now,  the converse part: we need to show $\cat U$ is an aisle if  $\varphi$ is 
	Thomason-Cousin.  By Proposition \ref{phi lemma} we know that $\cat U =
	\cat U _{\varphi} \cap \deriveb X$,  and $\cat U _{\varphi}$ is an aisle of $\derive X$
	by Theorem \ref{Earlier Theorem}.  Note that to show $\cat U$ is an aisle on $\deriveb X$  it 
	is enough to show that the truncation functor for $\cat U_{\varphi}$, that is,  
	$\tau _{\varphi}  $ restricts to $\deriveb X$.  
	
	In other words,  we need to show for any $A \in \deriveb X$ we have 
	$\tau _{\varphi} (A) \in \deriveb X$.  Indeed, this is the case,  by Remark 
	\ref{U remark} we have $\tau _{\varphi} (A) |_U \cong \tau _{\varphi |_U} (A |_U)$.  
	Now since $\varphi$ is Thomason-Cousin we have $\varphi |_U$ is Thomason-Cousin hence 
	$\tau _{\varphi |_U} (A |_U)$ is in $\deriveb U$ by Theorem \ref{main affine theorem}.  
	Since $U$ is arbitrary we get that $\tau _{\varphi} (A)$ is in $\deriveb X$.  This proves the 
	theorem.

\end{proof}

 \begin{corollary}
 \label{key corollary}
	Let $X$ be a Noetherian scheme admitting a dualizing complex.  A compactly generated
	tensor t-structure $(\cat U , \cat V)$ on $\derive X$ restrict to a t-structure on 
	$\deriveb X
	$ if and only if the corresponding Thomason filtration $\varphi$ satisfies
	 Thomason-Cousin condition.
\end{corollary}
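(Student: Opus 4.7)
The plan is to deduce the corollary directly from Theorem \ref{Main theorem}, using Theorem \ref{Earlier Theorem} to identify $(\cat U, \cat V)$ with $(\cat U_\varphi, \cat U_\varphi^{\perp})$ for a unique Thomason filtration $\varphi$, and Proposition \ref{phi lemma} to match the parametrization at the bounded level with the parametrization at the unbounded level.

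For the forward direction, suppose $(\cat U_\varphi, \cat U_\varphi^{\perp})$ restricts to a t-structure on $\deriveb X$. Then $\cat U_\varphi \cap \deriveb X$ is an aisle of $\deriveb X$, and since $\cat U_\varphi$ is a tensor aisle and $\perfectzero X \subset \deriveb X$, the restriction $\cat U_\varphi \cap \deriveb X$ is a tensor preaisle of $\deriveb X$. Proposition \ref{phi lemma} attaches a unique Thomason filtration to this bounded tensor preaisle, and it clearly coincides with $\varphi$ since both $\cat U_\varphi$ and $\cat U_\varphi \cap \deriveb X$ have the same graded support. Theorem \ref{Main theorem} then forces $\varphi$ to be Thomason-Cousin.

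For the converse, assume $\varphi$ is Thomason-Cousin. Theorem \ref{Main theorem} gives that $\cat U_\varphi \cap \deriveb X$ is an aisle of $\deriveb X$. Given any $A \in \deriveb X$, consider the truncation triangle $U \rar A \rar V \rar U[1]$ provided by this bounded aisle, so that $U, V \in \deriveb X$ and $V$ lies in the right orthogonal of $\cat U_\varphi \cap \deriveb X$ computed inside $\deriveb X$. Because $\cat U_\varphi$ is compactly generated by a set $\cat S$ of bounded compact objects, one has $\cat U_\varphi^{\perp} = \cat S^{\perp}$ inside $\derive X$, and the inclusion $\cat S \subset \cat U_\varphi \cap \deriveb X$ then yields $V \in \cat U_\varphi^{\perp}$. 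By uniqueness of the truncation triangle for $(\cat U_\varphi, \cat U_\varphi^{\perp})$ in $\derive X$, this triangle is also the truncation of $A$ in the ambient t-structure, and since both terms are bounded, $(\cat U_\varphi, \cat U_\varphi^{\perp})$ restricts to $\deriveb X$.

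The main work has already been done in Theorem \ref{Main theorem}; the only remaining bookkeeping is (i) matching the Thomason filtration produced by Theorem \ref{Earlier Theorem} at the unbounded level with the one produced by Proposition \ref{phi lemma} at the bounded level, and (ii) checking that the right orthogonals of $\cat U_\varphi$ and of $\cat U_\varphi \cap \deriveb X$ agree on bounded objects, both of which follow from the fact that $\cat U_\varphi$ is compactly generated by objects of $\deriveb X$.
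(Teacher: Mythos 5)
Your proposal is correct and follows exactly the route the paper takes: the paper's proof is literally the combination of Theorem \ref{Earlier Theorem}, Proposition \ref{phi lemma} and Theorem \ref{Main theorem}, and you have simply spelled out the bookkeeping (matching the two filtrations and comparing the orthogonals on bounded objects) that the paper leaves implicit.
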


\begin{proof}
	This is just the combination of Theorem \ref{Earlier Theorem},  Proposition \ref{phi lemma}
	 and Theorem \ref{Main theorem}.
\end{proof}

We will give a few corollaries of Theorem \ref{Main theorem} concerning semi-orthogonal 
decompositions.  First, we recall the definition.

\begin{definition}
	A \emph{semi-orthogonal decomposition} on  a triangulated category \cat T is a pair of full 
	subcategories $(\cat A, \cat B)$ satisfying the 
	following properties:
	\begin{itemize}
		\item[s1.] Both \cat A and \cat B are triangulated subcategories.
		\item[s2.] $\cat B \perp  \cat A$.
		\item[s3.] For any $T \in \cat T$ there is a distinguished triangle
		\[ B \rar T \rar A \rar  B[1]\]
		where $B \in \cat B$ and $A \in \cat A$.  
	\end{itemize}
	
\end{definition}

Note that if $(\cat A, \cat B)$ is a semi-orthogonal decomposition on $\cat T$ then 
$(\cat B,\cat A)$ is a t-structure on $\cat T$.  

\begin{definition}
	  We say $(\cat A, \cat B)$ is a \emph{tensor semi-orthogonal decomposition} on 
	  $\deriveb X$ if 
	  $\cat B$ is a \tensor preaisle of $\deriveb X$,  that is,   if for any $E \in \perfectzero X$ and
	  any $ F \in \cat B$ the derived tensor product $E \otimes F \in \cat B$.  
\end{definition}

\begin{corollary}
\label{tensor SoD}
	Let $X$ be a connected Noetherian scheme admitting a dualizing complex.  Every tensor 
	semi-orthogonal decomposition on $\deriveb X$ is trivial. 
\end{corollary}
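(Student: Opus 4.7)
The plan is to invoke Theorem \ref{Main theorem} to translate the existence of a tensor semi-orthogonal decomposition into a statement about a Thomason-Cousin filtration whose associated aisle is in fact a full triangulated subcategory, and then use connectedness to force the filtration to be trivial.

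Given a tensor semi-orthogonal decomposition $(\cat A, \cat B)$ on $\deriveb X$, the pair $(\cat B, \cat A)$ is a tensor t-structure, so by Theorem \ref{Main theorem} there is a unique Thomason-Cousin filtration $\varphi$ with $\cat B = \cat U_\varphi \cap \deriveb X$. The new input beyond being a t-structure is that $\cat B$ is a triangulated subcategory, hence closed under $[-1]$ as well as $[1]$. I would test the inclusion $\cat B[-1] \subseteq \cat B$ against the structure sheaf of $\bar{\{x\}}$ placed in cohomological degree $i-1$, for an arbitrary $x \in \varphi(i-1)$: this complex lies in $\cat B$ because $\varphi(i-1)$ is specialization-closed and contains $x$, while shifting it by $-1$ puts cohomology supported on $\bar{\{x\}}$ in degree $i$, forcing $x \in \varphi(i)$. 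Hence $\varphi(i-1) \subseteq \varphi(i)$, and together with the Thomason containment $\varphi(i) \subseteq \varphi(i-1)$ this forces $\varphi$ to be constant, say $\varphi \equiv Z$ for some $Z \subseteq X$.

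With $\varphi \equiv Z$, the Thomason-Cousin condition simply reads: whenever $y \in Z$ and $(y,x)$ is a specialization pair, $x \in Z$. Thus $Z$ is closed under generalization, and by the Thomason property also under specialization. The final step is a topological argument: on a connected Noetherian scheme any such $Z$ must be $\emptyset$ or $X$. Indeed, picking $y \in Z$, the generic point $\eta_i$ of any irreducible component $X_i$ through $y$ lies in $Z$ by generalization-closure, whence $X_i \subseteq Z$ by specialization-closure; propagating across the irreducible components of $X$ via their shared points, which form a connected graph since $X$ is connected, gives $Z = X$. Thus $\cat B = 0$ or $\cat B = \deriveb X$ and the semi-orthogonal decomposition is trivial.

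I expect the most delicate step to be the constancy argument for $\varphi$, where one must produce concrete objects of $\cat B$ at each level of the filtration; structure sheaves of closures of points (which sit in $\cat B$ because $\varphi$ is specialization-closed) handle this cleanly. The remaining topological reduction is elementary, so I foresee no further obstacle.
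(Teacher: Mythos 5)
Your proof is correct and follows essentially the same route as the paper: reduce via Theorem \ref{Main theorem} to a Thomason--Cousin filtration, use closure of $\cat B$ under $[-1]$ to force the filtration to be constant, and use connectedness to conclude the constant value is $\emptyset$ or $X$. In fact you supply details the paper leaves implicit (the explicit test objects $\CO_{\bar{\{x\}}}$ for constancy, and the component-graph argument for the topological step), so no changes are needed.
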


\begin{proof}
	First note that if $(\cat A, \cat B)$ is a semi-orthogonal decomposition of $\deriveb X$ then
	$(\cat B, \cat A)$ is a t-structure on $\deriveb X$.  Now since $(\cat A, \cat B)$ is tensor
	we have $(\cat B, \cat A)$ is a tensor t-structure on $\deriveb X$.  Then by 
	Theorem \ref{Main theorem} 	there
	is a unique Thomason-Cousin filtration $\varphi$ such that $\cat B = \cat U _{\varphi} \cap
	\deriveb X$.  Since $\cat B$ is triangulated subcategory the filtration must be constant
	and again by our observation that $\varphi (i) = X$ for some $i \ll 0$ it must be the constant
	filtration, that is,  $\varphi (i) = X$ for all $i$,  this implies that $\cat B = \deriveb X$.  

\end{proof}

\begin{corollary}
\label{ring SoD}
	If $R$ is a connected Noetherian ring that admits a dualizing complex then the derived category 
	$\deriveb R$ admits no non-trivial semi-orthogonal decompositions. 
\end{corollary}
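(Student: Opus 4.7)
The plan is to deduce this from Corollary \ref{tensor SoD} applied to $X = \Spec R$. Since $R$ is connected, $X$ is a connected Noetherian scheme admitting a dualizing complex, so Corollary \ref{tensor SoD} already tells us that every \emph{tensor} semi-orthogonal decomposition of $\deriveb R = \deriveb X$ is trivial. It therefore suffices to verify that in the affine setting every semi-orthogonal decomposition is automatically a tensor semi-orthogonal decomposition.

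Let $(\cat A, \cat B)$ be a semi-orthogonal decomposition of $\deriveb R$. The component $\cat B$ is a thick triangulated subcategory of $\deriveb R$: it is closed under shifts and cones by definition, and it is closed under direct summands because $\cat B = {}^{\perp}\cat A$ and left orthogonals are always closed under retracts. The key observation is that on the affine scheme $\Spec R$, the category of perfect complexes is precisely the thick subcategory of $\deriveb R$ generated by the structure sheaf $R$. Consequently, any $E \in \perfectzero R$ is built from finitely many shifts of $R$ by a finite sequence of cones and retracts.

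For any $F \in \cat B$, the functor $(-) \otimes F$ is triangulated, preserves direct sums and direct summands, and carries $R$ to $F$. Transporting the construction of $E$ through this functor exhibits $E \otimes F$ as being built from finitely many shifts of $F$ via the same operations. Since $\cat B$ is stable under shifts, cones, and retracts, we conclude $E \otimes F \in \cat B$. Hence $(\cat A, \cat B)$ is a tensor semi-orthogonal decomposition, and Corollary \ref{tensor SoD} forces it to be trivial.

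There is essentially no obstacle here; the entire substantive content has been absorbed into Corollary \ref{tensor SoD}, and the only new input is the classical fact that $\perfect(\Spec R)$ is generated, as a thick subcategory of $\deriveb R$, by $R$. If one wished to make the reduction even more transparent, one could phrase it as a general lemma: every SoD of $\deriveb X$ on an affine Noetherian scheme is a tensor SoD.
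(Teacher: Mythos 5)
Your proof is correct and follows exactly the paper's route: the paper's own proof of this corollary is the one-line observation that every semi-orthogonal decomposition of $\deriveb R$ is tensor, followed by an appeal to Corollary \ref{tensor SoD}. Your justification of that observation --- that $\perfect R$ is the thick subcategory of $\deriveb R$ generated by $R$, and that $\cat B$ is thick because it coincides with ${}^{\perp}\cat A$ --- is the standard argument and is valid.
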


\begin{proof}
	It follows from Corollary \ref{tensor SoD} by the noting that every semi-orthogonal 
	decomposition on $\deriveb R $ is tensor.
\end{proof}

\section{Perversity functions and tensor t-structures }

\hspace{1mm}

Let $X$ be a Noetherian scheme admitting a dualizing complex.  In \cite[Theorem 1]{Bez10},  
following Deligne,  Bezrukavnikov has shown that given a monotone comonotone perversity 
function on $X$ one can construct a t-structure on $\deriveb X$.  In this section we show
that such t-structures are tensor t-structures.  And conversely,  every tensor t-structure on 
$\deriveb X$ arises in this way.

\begin{definition}
Any bounded integer valued function on $X$ is called a \emph{perversity} function on $X$.  A  
perversity function is called \emph{monotone} if for any specialization pair $(y,x)$ we have 
$p(y) \geq p(x)$.  

Note that the dimension function $d_X : X \rar \Z$ defined as 
$ x \mapsto \mathrm{dim}(\CO _{x, X})$ is a monotone perversity function.  The \emph{dual} of 
a perversity function $p$ denoted by $\bar{p}$ is defined to be $d_X - p$.  One says a 
perversity function $p$ is comonotone if $\bar{p}$ is monotone.

\end{definition}

Given a monotone perversity $p$ on $X$ one can define a pair of subcategories 
$(\cat U^{p}, \cat V ^{p})$ on $\deriveb X$ as follows:

\[ \cat U ^p \colonequals \{ F \in \deriveb X \mid \text{for any } x,  F_x \in \mathbf{D}^{\leq p(x)}
(\CO _{x, X})\},\]
\[\cat V^p \colonequals \{ G \in \deriveb X \mid \text{for any } x, (\mathbf{R}\Gamma_{\bar{\{x\}} 
}G)_x \in \mathbf{D}^{> p(x)}(\CO _{x, X})  \}.\]

The following is a well known theorem due to Deligne-Bezrukavnikov.

\begin{theorem}
\label{Deligne}
	Let $X$ be a Noetherian scheme admitting a dualizing complex.   Let $p$ be a monotone 
	perversity on $X$ and  $(\cat U ^p, \cat V ^p)$ as defined above.  Then,  
	$(\cat U^p, \cat V^p)$ is a t-structure on $\deriveb X$ if $p$ is comonotone. 
\end{theorem}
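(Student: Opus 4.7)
The plan is to translate the perversity data into a Thomason-Cousin filtration and then invoke Theorem \ref{Main theorem}. Given a monotone perversity $p$, define $\varphi_p \colon \Z \rar 2^X$ by $\varphi_p(i) \colonequals \{x \in X \mid p(x) \geq i\}$. Monotonicity of $p$ makes each $\varphi_p(i)$ specialization-closed, and the boundedness of $p$ ensures $\varphi_p$ is of finite length; hence $\varphi_p$ is a Thomason filtration.

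First, I would verify the identification $\cat U ^p = \cat U _{\varphi_p} \cap \deriveb X$: the stalk condition $F_x \in \mathbf{D}^{\leq p(x)}(\CO _{x,X})$ for all $x$ is equivalent to $H^i(F)_x = 0$ whenever $i > p(x)$, which amounts to $\Supp H^i(F) \subset \varphi_p(i)$ for every $i$. Next, for $E \in \perfectzero X$ and $F \in \cat U ^p$, the stalk $E_x$ is a bounded complex of finite free $\CO _{x,X}$-modules concentrated in non-positive degrees, so $(E \otimes F)_x$ lies in $\mathbf{D}^{\leq p(x)}(\CO _{x,X})$; thus $\cat U ^p$ is a \tensor preaisle.

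The crucial observation is that the comonotonicity of $p$ is exactly the Thomason-Cousin condition on $\varphi_p$. Indeed, for a specialization pair $(y,x)$ with $y \in \varphi_p(i)$, the comonotone hypothesis $d_X(y) - p(y) \geq d_X(x) - p(x)$ gives $p(x) \geq p(y) - \delta_X(y,x) \geq i - \delta_X(y,x)$, so $x \in \varphi_p(i - \delta_X(y,x))$. Now Theorem \ref{Main theorem} applies and $\cat U ^p$ is an aisle of $\deriveb X$; this produces a t-structure $(\cat U ^p, (\cat U ^p)^{\perp})$ on $\deriveb X$.

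The remaining — and hardest — step is to identify the coaisle $(\cat U ^p)^{\perp}$ with $\cat V ^p$. For the containment $\cat V ^p \subset (\cat U ^p)^{\perp}$, I would filter any $F \in \cat U ^p$ by the subcomplexes supported on $\varphi_p(i)$, yielding a spectral sequence that reduces computing $\Hom(F, G)$ to contributions at individual points, where the defining local-cohomology condition on $G$ combined with local duality forces the vanishing. For the reverse inclusion, I would use Lemma \ref{localizes to open} to reduce to the affine case and appeal to the Cousin-resolution description of the truncation functor for Thomason-Cousin filtrations used in \cite[Theorem 6.9]{AJS10}; this is the place where the dualizing complex is indispensable. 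The main obstacle is precisely this coaisle identification; everything else is a straightforward translation through Theorem \ref{Main theorem}.
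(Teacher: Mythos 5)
Your overall strategy is genuinely different from the paper's, which disposes of this statement simply by citing \cite[Theorem 1]{Bez10}: you instead try to re-derive the Deligne--Bezrukavnikov theorem internally from the classification Theorem \ref{Main theorem}. The first part of your argument is sound and, importantly, non-circular: Theorem \ref{Main theorem} rests on \cite[Theorem 6.9]{AJS10} together with the localization lemmas, not on Theorem \ref{Deligne}, and your verifications that $\cat U ^p = \cat U _{\varphi_p} \cap \deriveb X$, that $\cat U ^p$ is a \tensor preaisle, and that comonotonicity of $p$ is equivalent to the Thomason--Cousin condition on $\varphi_p$ all match computations the paper itself carries out later in Lemma \ref{phi and p lemma} and Theorem \ref{Perversity}. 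So you correctly obtain that $(\cat U ^p, (\cat U ^p)^{\perp})$ is a t-structure on $\deriveb X$.

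The gap is in the step you yourself flag as the hardest: identifying $(\cat U ^p)^{\perp}$ with $\cat V ^p$. This identification is precisely the content of \cite[Proposition 2]{Bez10}, which the paper invokes verbatim inside Lemma \ref{phi and p lemma}; without it you have produced a t-structure with the right aisle but not the theorem as stated, since $\cat V ^p$ is defined by a local-cohomology condition rather than as an orthogonal. Your sketch does not close this. For the inclusion $\cat V ^p \subset (\cat U ^p)^{\perp}$ the relevant tool is the adjunction $\Hom (F, G) \cong \Hom (F, \mathbf{R}\Gamma_{Z} G)$ for $F$ supported on $Z$, combined with a d\'evissage on the codimension of the support, not local duality; and for the reverse inclusion $(\cat U ^p)^{\perp} \subset \cat V ^p$ you must actually show that $(\mathbf{R}\Gamma_{\bar{\{x\}}} G)_x$ is concentrated in degrees $> p(x)$ for every $x$ whenever $G$ is right-orthogonal to $\cat U ^p$ --- the phrase \enquote{appeal to the Cousin-resolution description of the truncation functor} names a reference point but produces no argument. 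Either carry out these two inclusions in detail or, as the paper does, quote \cite[Proposition 2]{Bez10}; with that input supplied, your route is a legitimate alternative proof of the theorem.
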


\begin{proof}
	\cite[Theorem 1]{Bez10}.
\end{proof}

We will show such t-structures are tensor t-structures and the converse is also true.

\begin{theorem}
\label{Perversity}
	Let $X$ be a Noetherian scheme admitting a dualizing complex.  The pair of subcategories
	$(\cat U ^p, \cat V ^p)$ is a tensor t-structure on $\deriveb X$.
	
	Conversely,  every tensor-t-structure on $\deriveb X$ is of the form $(\cat U^p, \cat V^p)$ 
	 for some monotone comonotone perversity function $p$ on $X$. 
\end{theorem}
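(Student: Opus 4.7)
The plan is to reduce both claims of the theorem to Theorem \ref{Main theorem} by establishing a natural bijection between monotone comonotone perversity functions $p$ on $X$ and Thomason-Cousin filtrations $\varphi$ on $X$, and verifying that this bijection intertwines the aisles $\cat U^p$ and $\cat U_\varphi \cap \deriveb X$.

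First I would set up the combinatorial dictionary. Given a perversity $p$, define $\varphi_p(i) := \{x \in X : p(x) \geq i\}$; since $p$ is bounded this produces a filtration that stabilises in both directions. Conversely, given a Thomason-Cousin filtration $\varphi$, define $p_\varphi(x) := \sup\{i \in \Z : x \in \varphi(i)\}$, which is a well-defined integer because $\varphi(i) = X$ for $i \ll 0$ and $\varphi(i) = \emptyset$ for $i \gg 0$. These assignments are mutually inverse since $\varphi$ is decreasing in $i$. I then check two equivalences: (i) $p$ is monotone if and only if every level $\varphi_p(i)$ is specialization-closed, which is an immediate translation of $p(y) \geq p(x)$; (ii) $p$ is comonotone, that is $p(y) - p(x) \leq \delta_X(y,x)$ for every specialization pair $(y,x)$ (using $\delta_X(y,x) = d_X(y) - d_X(x)$ and $\bar{p} = d_X - p$), if and only if $\varphi_p$ satisfies the Thomason-Cousin condition. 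The forward implication reads: if $y \in \varphi_p(i)$, so $p(y) \geq i$, then $p(x) \geq p(y) - \delta_X(y,x) \geq i - \delta_X(y,x)$, whence $x \in \varphi_p(i - \delta_X(y,x))$. The reverse implication follows by specialising to $i = p(y)$.

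Next I would compare the aisles. Unpacking definitions, $F \in \cat U^p$ means $H^i(F)_x = 0$ for all $x \in X$ and all $i > p(x)$, equivalently $\Supp H^i(F) \subset \{x : p(x) \geq i\} = \varphi_p(i)$ for every $i$, which is precisely $F \in \cat U_{\varphi_p} \cap \deriveb X$. With this comparison in hand, the first assertion follows from Theorem \ref{Main theorem}: for $p$ monotone comonotone, $\varphi_p$ is Thomason-Cousin, so $\cat U^p = \cat U_{\varphi_p} \cap \deriveb X$ is a tensor aisle on $\deriveb X$; since Theorem \ref{Deligne} already guarantees that $(\cat U^p, \cat V^p)$ is a t-structure, we must have $\cat V^p = (\cat U^p)^\perp$, yielding a tensor t-structure. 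For the converse, a tensor t-structure $(\cat U, \cat V)$ on $\deriveb X$ corresponds via Theorem \ref{Main theorem} to a unique Thomason-Cousin filtration $\varphi$ with $\cat U = \cat U_\varphi \cap \deriveb X$; setting $p := p_\varphi$, the dictionary makes $p$ monotone comonotone, and the aisle comparison yields $\cat U^p = \cat U$, hence $\cat V^p = \cat V$.

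The main obstacle I anticipate is purely bookkeeping around sign and offset conventions—specifically, verifying on the nose that the comonotonicity inequality $p(y) - p(x) \leq \delta_X(y,x)$ aligns with the Thomason-Cousin shift $i \mapsto i - \delta_X(y,x)$ under the assignment $p \mapsto \varphi_p$. Once the dictionary is correctly calibrated, both implications reduce formally to Theorem \ref{Main theorem} and Theorem \ref{Deligne}, so no additional triangulated-categorical input is needed.
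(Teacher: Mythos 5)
Your proposal is correct in substance, and for the converse it coincides with the paper's proof: the dictionary you set up between monotone (resp.\ comonotone) perversities and Thomason (resp.\ Thomason--Cousin) filtrations, together with the identification $\cat U^p=\cat U_{\varphi_p}\cap\deriveb X$, is exactly Lemma \ref{phi and p lemma}, and the converse then follows from Theorem \ref{Main theorem} as you say. For the forward direction you take a genuinely different route. The paper starts from Theorem \ref{Deligne} (so the t-structure is given) and then verifies the tensor condition \emph{directly}: for $E\in\perfectzero X$ and $F\in\cat U^p$ it localizes at each $x$ and runs the spectral sequence $H^i(E_x\otimes H^j(F_x))\Rightarrow H^{i+j}(E_x\otimes F_x)$, noting that $i+j>p(x)$ forces $i>0$ or $j>p(x)$. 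You instead want to extract the aisle property itself from the classification Theorem \ref{Main theorem}. That works, but be aware that Theorem \ref{Main theorem} takes as \emph{hypothesis} that $\cat U$ is a tensor preaisle of $\deriveb X$; so before invoking it you must still check that $\cat U^p=\cat U_{\varphi_p}\cap\deriveb X$ is closed under $-\otimes E$ for $E\in\perfectzero X$. This is not a tautology of the support description -- it is essentially the same stalkwise spectral-sequence computation the paper performs -- though you can also discharge it in one line from Theorem \ref{Earlier Theorem}: $\cat U_{\varphi_p}$ is a tensor aisle of $\derive X$, hence closed under tensoring with objects of $\perfectzero X\subset\mathbf{D}^{\leq 0}_{qc}(X)$, and tensoring a bounded coherent complex with a perfect complex stays in $\deriveb X$. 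With that step made explicit your argument is complete, and it has the mild advantage that Theorem \ref{Deligne} is then needed only to identify $\cat V^p$ with $(\cat U^p)^{\perp}$ (the paper does this via \cite[Proposition 2]{Bez10}), rather than for the existence of the t-structure.
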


\begin{proof}	
	Theorem \ref{Deligne} says it is a t-structure.  We only need to show that it is a tensor 
	t-structure,  that is, if $E \in \perfectzero X$ and $F \in \cat U ^p$ then 
	$E \otimes F \in \cat U ^p$.  From the description of $\cat U ^p$,  we need to show that 
	$(E \otimes F)_x = E_x \otimes F_x \in 	\mathbf{D}^{\leq p(x)}(\CO _{x, X})$ for any 
	$x \in X$.
	
	This we deduce from the following spectral sequence,
	
	\[E ^{i ,j} _2 \colonequals H ^i  (E_x  \otimes H^j (F_x)) \Longrightarrow 
	H ^{i+j} (E_x \otimes F_x). \]
	
	If $i+j > p(x)$ then either $i > 0$ or  $j > p(x)$.  In either case, the $E ^{i, j} _2$
	term vanishes,  since $E_x \in \perfectzero X$ and $F_x \in 
	\mathbf{D}^{\leq p(x)}(\CO _{x, X})$.  Hence $H ^{i+j} (E_x \otimes F_x) = 0$ whenever,
	$i+j > p(x)$.  This proves our claim.
	
	The converse part follows from the below Lemma \ref{phi and p lemma} and our
	Theorem \ref{Theorem alpha} which is just a restatement of Theorem \ref{Main theorem}. 
\end{proof}

\begin{lemma}
\label{phi and p lemma}
The following maps provide a one-to-one correspondence between the set of monotone 
perversity function on $X$ and the set of finite length Thomason filtrations on $X$:

\[ p \longmapsto \varphi,  \hspace*{1cm }\text{ where } \varphi (n) \colonequals \{x\in X \mid p(x) \geq n \} \text{ for } n \in \Z ,\]
and
\[ \varphi \longmapsto p,  \hspace*{1cm }\text{ where } p(x) \colonequals \text{max}\{n \mid x \in \varphi(n)\} \text{ for } x \in X.\]

Moreover,  if a perversity function $p$ corresponds to $\varphi$ under this bijection, then
$\cat U _{\varphi} = \cat U ^p$ and $\cat U _{\varphi} ^\perp = \cat V ^p$.  Further,  $p$ is comonotone if and only if $\varphi$ is Thomason-Cousin.
\end{lemma}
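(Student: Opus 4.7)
The plan is to verify four pieces in turn: (i) that $p\mapsto \varphi$ and $\varphi\mapsto p$ are well-defined mutual inverses between the two stated sets; (ii) that $\cat U^p = \cat U_\varphi$; (iii) that $\cat V^p = \cat U_\varphi^\perp$; and (iv) that comonotonicity of $p$ corresponds to the Thomason-Cousin condition on $\varphi$. I would prove them roughly in the order (i), (iv), (ii), (iii).

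For (i), given monotone $p$, the subset $\varphi(n) = \{x : p(x) \geq n\}$ is specialization-closed by monotonicity of $p$ and tautologically decreasing in $n$; boundedness of $p$ yields $\varphi(n) = X$ for $n \ll 0$ and $\varphi(n) = \emptyset$ for $n \gg 0$, so the filtration has finite length. Conversely, for any finite length Thomason filtration $\varphi$ and any $x \in X$, the set $\{n : x \in \varphi(n)\}$ is nonempty and bounded above, so $p(x) = \max\{n : x\in\varphi(n)\}$ is well-defined; specialization-closedness of each $\varphi(n)$ implies $p$ is monotone. Mutual inverseness is immediate from the definitions. For (iv), the dual $\bar p = d_X - p$ is monotone iff $p(x) \geq p(y) - \delta_X(y,x)$ for every specialization pair $(y,x)$; plugging in $y \in \varphi(i)$, i.e., $p(y)\geq i$, this reads $x \in \varphi(i - \delta_X(y,x))$, which is exactly the Thomason-Cousin condition; setting $i = p(y)$ reverses the implication.

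For (ii), unpacking the definition of the stalk, $F \in \cat U^p$ iff $H^i(F)_x = 0$ for every $x$ and every $i > p(x)$, iff $\Supp H^i(F) \subset \{x : p(x) \geq i\} = \varphi(i)$ for every $i$, iff $F \in \cat U_\varphi$. For (iii), in the case when $p$ is comonotone (so $\varphi$ is Thomason-Cousin by (iv), which is the only case actually invoked in Theorem~\ref{Perversity}), Theorem~\ref{Deligne} makes $(\cat U^p,\cat V^p)$ a t-structure on $\deriveb X$, and Theorem~\ref{Theorem alpha} together with (ii) makes $(\cat U_\varphi,\cat U_\varphi^\perp)$ a t-structure with the same aisle; by uniqueness of the coaisle, the two coaisles coincide. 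In the general monotone case one has to argue more carefully: using the dualizing complex and local duality at each point $x$, the local cohomology vanishing $(\mathbf{R}\Gamma_{\overline{\{x\}}} G)_x \in \mathbf{D}^{>p(x)}(\CO_{x,X})$ translates into a vanishing of $\Hom$ from a family of residual generators of $\cat U_\varphi$, giving $\cat V^p = \cat U_\varphi^\perp$ directly.

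The main obstacle is step (iii) outside the comonotone case; inside the comonotone case it follows formally from uniqueness of the coaisle of a t-structure once (ii) matches the aisles, but a self-contained verification for an arbitrary monotone $p$ requires the local-duality argument sketched above. All remaining steps are bookkeeping translations between stalkwise, support-theoretic, and combinatorial data.
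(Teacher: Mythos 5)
Your steps (i), (ii) and (iv) coincide with the paper's argument: the bijection and the equivalence ``$p$ monotone $\iff \varphi$ Thomason'' are the same bookkeeping, your unpacking of $\cat U^p = \cat U_\varphi$ via $\Supp H^i(F)$ is literally the paper's chain of equivalences, and your computation identifying comonotonicity with the Thomason--Cousin condition is the same chain the paper writes out. The one point of divergence, and the one genuine gap, is step (iii). The paper disposes of it in one line: since $\cat U_\varphi = \cat U^p$, one has $\cat U_\varphi^{\perp} = (\cat U^p)^{\perp}$, and $(\cat U^p)^{\perp} = \cat V^p$ is exactly \cite[Proposition 2]{Bez10}, which holds for an arbitrary \emph{monotone} perversity $p$ (no comonotonicity needed). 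Your substitute in the comonotone case --- both $(\cat U^p,\cat V^p)$ and $(\cat U_\varphi,\cat U_\varphi^{\perp})$ are t-structures with the same aisle, hence have the same coaisle --- is correct and non-circular (Theorem \ref{Main theorem} does not depend on this lemma), and it does suffice for the only downstream use in Theorem \ref{Perversity}.

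However, the lemma as stated asserts $\cat U_\varphi^{\perp} = \cat V^p$ for \emph{every} monotone $p$ in the bijection, and for non-comonotone $p$ your argument reduces to the phrase ``the local cohomology vanishing translates into a vanishing of $\Hom$ from a family of residual generators,'' which is not an argument but a restatement of what must be proved. The content you are missing is precisely Bezrukavnikov's Proposition 2 (equivalently, the local-duality computation carried out in \cite{Bez10}, \cite{Gabber} or \cite{Kashiwara}); without citing it or reproving it, step (iii) is incomplete in the generality claimed. If you want to avoid the citation, you should either carry out the local-duality computation in detail, or restrict the ``moreover'' clause of the lemma to comonotone $p$, which is all that Theorem \ref{Perversity} actually requires.
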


\begin{proof}
	This is an elementary but long check.  Probably this is standard and well known,  but
	we provide a proof for the convenience of the reader.
	
	Suppose $p$ is a perversity function on $X$ and $\varphi$ is defined as above.  Then, it
	is easy to see that  $\varphi$ is a finite length filtration, that is, $\varphi (i+1) \subset \varphi(i)
	$  for all $i \in \Z$ and for some $m > n$, we have $\varphi(n) = X$ and 
	$\varphi(m) = \emptyset$.   
	And,  the converse is also true,  that is, if we start with a filtration $\varphi$ having the above 
	property  
	and define a map $p$ as above then it is a perversity function on $X$.  Now,
	\begin{align*}
	p \text { is monotone}
	& \iff p(y) \geq p(x)  \text{ for all specialization pair } (y,x), \\
	& \iff \text {if } x \in \varphi (i) \text{ then } y \in \varphi (i) \text{ for all specialization pair } (y,x), \\
	& \iff \varphi \text{ is Thomason.}\\
	\end{align*}
	Next, we will show that $\cat U_{\varphi} = \cat U^p$.  
	\begin{align*}
	F \notin \cat U _{\varphi} 
	& \iff \exists i \in \Z,  \text { such that  }  \Supp H^i(F) \nsubseteq \varphi(i), \\
	& \iff \exists  i \in \Z \text { and } x \in X, \text { such that } x \in \Supp H^i(F) \text{ and } x 
	\notin \varphi (i),   \\
	& \iff \exists  i \in \Z \text { and } x \in X,  \text { such that }F_x \notin \mathbf{D}^{\leq i-1}
	(\CO _{x, X}) \text { and } p(x) 
	\leq i-1, \\
	& \iff \exists x \in X,  \text{ such that }F_x \notin \mathbf{D}^{\leq p(x)}(\CO _{x, X}),\\
	& \iff F \notin \cat U ^p \\
	\end{align*}
	
	We have $\cat U_{\varphi} = \cat U^p$,  hence $\cat U _{\varphi} ^\perp = 
	\cat U^p {^\perp}$.  By \cite[Proposition 2]{Bez10} we have $\cat U^p {^\perp} =
	\cat V^p $.  This proves our claim $\cat U _{\varphi} ^\perp =\cat V^p$.  Next,  suppose $p$ is 
	a monotone perversity and $\varphi$ its corresponding Thomason filtration.  Then, we have 
	the following chain of equivalent statements:
\begin{align*}
 p \text{ is comonotone}
& \iff d_X - p \text{ is monotone, }\\
& \iff d_X(y) - p(y) \geq d_X(x) - p(x) \text{ for all specialization pair } (y,x), \\
& \iff p(x) \geq p(y) - (d_X(y) - d_X(x) ) \text{ for all specialization pair } (y,x), \\
& \iff p(x) \geq p(y) - \delta (y, x)  \text{ for all specialization pair } (y,x), \\
& \iff \text {if } y \in \varphi (i) \text { then } x \in \varphi(i -\delta (y,x)) \text{ for all specialization pair } (y,x), \\
& \iff \varphi \text{ is Thomason-Cousin. }
\end{align*}

\end{proof}

Recall that we say a t-structure $(\cat U, \cat V)$ on $\deriveb X$ is principal if the aisle
$\cat U$ is generated by a single object of $\deriveb X$, that is,  there is a complex 
$E \in \deriveb X$  such that $\cat U =  \tpreaisle E$.

\begin{proposition}
	A tensor t-structure on $\deriveb X$ is principal if and only if the corresponding monotone
	comonotone perversity is upper semi-continuous.  
\end{proposition}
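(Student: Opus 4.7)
The plan is to translate principality of $(\cat U, \cat V)$ into closedness of every stratum of the Thomason--Cousin filtration $\varphi$ attached by Theorem \ref{Theorem alpha}, and then to recognize that closedness as upper semi-continuity of the perversity under the bijection of Lemma \ref{phi and p lemma}. Since $\varphi(n) = \{x \in X \mid p(x) \geq n\}$ and $p$ is integer-valued and bounded, $p$ is upper semi-continuous precisely when every superlevel set $\varphi(n)$ is a closed subset of $X$. So the statement reduces to the claim that $\cat U$ is principal if and only if each $\varphi(n)$ is closed.

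For the forward direction, assume $\cat U = \tpreaisle E$ for some $E \in \deriveb X$. Proposition \ref{phi lemma} expresses $\varphi$ as the Thomason filtration of the cocomplete extension of $\cat U$ inside $\derive X$, which is the principal tensor preaisle generated by $E$ in $\derive X$. For this preaisle the graded support formula
\[
\varphi(n) \;=\; \bigcup_{i \geq n} \Supp H^i(E)
\]
holds by exactly the argument used in the proof of Proposition \ref{finite}, the only input being that the operations building up $\tpreaisle E$ from $E$ (positive shifts, tensoring with $\perfectzero X$, extensions, cones, coproducts, and summands) cannot enlarge graded support. Since $E \in \deriveb X$ has coherent cohomology in only finitely many degrees and the support of a coherent sheaf on a Noetherian scheme is closed, each $\varphi(n)$ is a finite union of closed subsets and therefore closed.

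For the converse, suppose $\varphi$ is Thomason--Cousin with every $\varphi(n)$ closed. Finite length of $\varphi$ provides integers $n_0 \leq n_1$ with $\varphi(n) = X$ for $n \leq n_0$ and $\varphi(n) = \emptyset$ for $n \geq n_1$. For each $n \in \{n_0, n_0+1, \ldots, n_1 - 1\}$ pick a coherent sheaf $\cat F_n$ on $X$ with $\Supp \cat F_n = \varphi(n)$, for instance the structure sheaf of the reduced closed subscheme with underlying set $\varphi(n)$. Set
\[
E \;\colonequals\; \bigoplus_{n_0 \leq n < n_1} \cat F_n[-n] \;\in\; \deriveb X.
\]
Using that $\varphi$ is descending with the prescribed constant values at the ends, a direct calculation gives $\bigcup_{i \geq n} \Supp H^i(E) = \varphi(n)$ for all $n \in \Z$. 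Hence the Thomason filtration assigned by Proposition \ref{phi lemma} to the tensor preaisle $\tpreaisle E \subset \deriveb X$ is precisely $\varphi$, and by the uniqueness part of that proposition $\tpreaisle E = \cat U_\varphi \cap \deriveb X = \cat U$, so $\cat U$ is principal.

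The only mildly delicate point is the graded support identification at the outset: one must know that the Thomason filtration attached to $\tpreaisle E$ in $\deriveb X$ is computed directly from the supports $\Supp H^i(E)$ of the single generator. This is not a serious obstacle, since it is the same computation as in Proposition \ref{finite} combined with the identification of the cocomplete extension of $\tpreaisle E \subset \deriveb X$ with $\tpreaisle E \subset \derive X$; once this is in hand the entire proof reduces to a short closedness check on the filtration and the explicit construction of a generator.
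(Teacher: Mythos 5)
Your argument is correct and follows the same route as the paper's: reduce upper semi-continuity of $p$ to closedness of each stratum $\varphi(n)$ via Lemma \ref{phi and p lemma}, then identify closed finite-length filtrations with principal tensor preaisles through the graded-support computation of Proposition \ref{finite}, transferred to $\deriveb X$ by Proposition \ref{phi lemma}. The only cosmetic difference is that you build the generator from structure sheaves of the reduced closed subschemes with underlying sets $\varphi(n)$ rather than from the perfect complexes of \cite[Lemma 5.10]{DS23}, which is legitimate since a generator in $\deriveb X$ need not be perfect.
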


\begin{proof}
	Recall that a function $p : X \rar \Z$ is upper semi-continuous if and only if for each 
	$n \in \Z$ the set $\varphi (n) \colonequals \{x\in X \mid p(x) \geq n \}$ is a closed subset of 
	$X$.  The filtration $\{\varphi (n)\} _{n \in \Z}$ is precisely the Thomason-Cousin filtration 
	corresponding to $p$; see Lemma \ref{phi and p lemma}.  Now our claim follows from 
	Proposition \ref{finite} and Proposition \ref{phi lemma}.

\end{proof}


\subsection{Tensor t-structures under the pullback}

\hspace{1mm}

Recall from \cite[\S 2 and \S 3]{DS23} that if $\cat S$ is a collection of objects of $\deriveb X$,  
we denote the smallest \tensor preaisle containing $\cat S$ by $\tpreaisle S$.  Let $f: X \rar Y$ 
be a morphism of schemes.  Suppose $\cat A$ is an aisle of $\deriveb Y$.  We 
define the image of $\cat A$ under the derived pullback functor $\mathbf{L}f^*$ as 
$\langle {\mathbf{L}f^*A \mid A \in \cat A \rangle}^{\leq 0}_{\otimes}$ and denote it by 
$\mathbf{L}f^*\cat A$.
In this section we will answer  the question when $\mathbf{L}f^*\cat A$ is an aisle of 
$\deriveb X$ using the characterization in terms of perversity functions.  First, we need to 
slightly enlarge our objects of study.

\begin{definition}
 	We say a preaisle $\cat U$ of $\deriveb X$ is \emph{bounded }if there are integers
 	 $m \leq n$ such that, \[\mathbf{D}^ {b,  \leq m} (X) \subset \cat U  \subset 
 	 \mathbf{D}^ {b,  \leq n} (X).\]

 \end{definition}
 
 \begin{proposition}
 \label{p lemma}
	Let $X$ be a Noetherian scheme.  Then, there is a one-to-one correspondence between
	the bounded \tensor preaisles of $\deriveb X$ and monotone perversity functions on $X$.
	The bijection is given as follows:
	
	\[ \cat U \longmapsto p _{\cat U},  \hspace*{1cm } \text{ where \hspace*{1mm } }  
	p_{\cat U}(x) \colonequals \text{max} \{ n \mid E_x \in \mathbf{D}^{\leq n}(\CO _{x, X}) 
	\text{ for all } E \in \cat U \}\]
	and,
	
	\[ p \longmapsto \cat U_p,   \hspace*{1cm } \text{ where \hspace*{1mm } }  \cat U ^p 
	\colonequals \{ F \in \deriveb X \mid \text{for any } x,  F_x \in \mathbf{D}^{\leq p(x)}
	(\CO _{x, X})\}.\] 
	
 \end{proposition}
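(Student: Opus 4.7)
The plan is to obtain the bijection by composing two already-established correspondences. Proposition~\ref{phi lemma} realizes every \tensor preaisle $\cat U$ of $\deriveb X$ as $\cat U_\varphi \cap \deriveb X$ for a unique Thomason filtration $\varphi$, and the boundedness hypothesis $\mathbf{D}^{b,\leq m}(X) \subset \cat U \subset \mathbf{D}^{b,\leq n}(X)$ forces $\varphi$ to have finite length (since it implies $\varphi(m)=X$ and $\varphi(n+1)=\emptyset$). The proof of Lemma~\ref{phi and p lemma} already matches finite length Thomason filtrations bijectively with monotone perversity functions via $\varphi(n)=\{x : p(x)\geq n\}$. What remains is to check that the two explicit assignments written down in the proposition compute this composition.

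I would first verify that each map lands where claimed. For a monotone perversity $p$, closure of $\cat U^p$ under positive shifts, extensions, and coproducts is pointwise at each stalk, while the sandwich $m \leq p \leq n$ immediately yields the boundedness $\mathbf{D}^{b,\leq m}(X) \subset \cat U^p \subset \mathbf{D}^{b,\leq n}(X)$. Closure under tensoring with $\perfectzero X$ is a verbatim repetition of the Grothendieck spectral sequence argument from the proof of Theorem~\ref{Perversity}: for $E \in \perfectzero X$ and $F \in \cat U^p$ the spectral sequence $E_2^{i,j} = H^i(E_x \otimes H^j(F_x)) \Rightarrow H^{i+j}(E_x \otimes F_x)$ collapses above total degree $p(x)$. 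Conversely, for a bounded \tensor preaisle $\cat U$, the function $p_{\cat U}$ is bounded by the same sandwich, and its monotonicity is the geometric heart of the proof: for a specialization pair $(y,x)$ the stalk $E_x$ is a localization of $E_y$ at the prime of $\CO_{y,X}$ corresponding to $x$, so nonvanishing of $H^n(E_x)$ propagates to nonvanishing of $H^n(E_y)$, forcing $p_{\cat U}(y) \geq p_{\cat U}(x)$.

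Next I would check that the two composites are the identity. The identity $\cat U = \cat U^{p_{\cat U}}$ is a short chase through definitions: Proposition~\ref{phi lemma} identifies $\cat U$ with $\cat U_{\varphi_{\cat U}} \cap \deriveb X$ for a unique filtration $\varphi_{\cat U}$, and direct inspection of the constructions gives $\varphi_{\cat U}(n) = \{x : p_{\cat U}(x) \geq n\}$, so Lemma~\ref{phi and p lemma} completes the identification $\cat U_{\varphi_{\cat U}} \cap \deriveb X = \cat U^{p_{\cat U}}$. The reverse composite $p = p_{\cat U^p}$ is the subtler direction: the inequality $p_{\cat U^p} \leq p$ is tautological, but the opposite inequality requires exhibiting, for each $x \in X$, an object of $\cat U^p$ whose stalk at $x$ has nonzero cohomology in the extremal degree $p(x)$. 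A natural witness is the shifted structure sheaf $\CO_{\bar{\{x\}}}[-p(x)]$ of the reduced closure of $x$, whose membership in $\cat U^p$ at the other points of $\bar{\{x\}}$ is precisely where the monotonicity hypothesis on $p$ is genuinely used. This witness construction is the main obstacle in the argument; everything else reduces to a formal assembly of the earlier structural results.
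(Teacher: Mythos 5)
Your proposal is correct and follows essentially the same route as the paper, which likewise reduces the statement to the combination of Proposition~\ref{phi lemma}, Lemma~\ref{phi and p lemma}, and the spectral-sequence argument of Theorem~\ref{Perversity}, with boundedness of the preaisle matching boundedness of the perversity function. You merely fill in the details the paper leaves as ``not hard to show'' (monotonicity of $p_{\cat U}$ via localization of stalks, and the witness $\CO_{\bar{\{x\}}}[-p(x)]$ for the composite identity), and those details check out.
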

 
 \begin{proof}
 This is a slight extension of Theorem \ref{Perversity}.  The only new thing to check here is 
 boundedness.  Let $p$ be a monotone perversity function.  As $p$ is bounded there are 
 integers $m \leq n
 $ such that $p(x) \in  [m, n]$ for all $x \in X$.  Therefore, $\mathbf{D}^ {b,  \leq m} (X) \subset 
 \cat U _p  \subset \mathbf{D}^ {b,  \leq n} (X)$.  Now if $\cat U$ is a bounded \tensor preaisle
 then it is not hard to show that $p_{\cat U}$ is a monotone perversity function on $X$. The 
 rest follows from the arguments used in the proof of Theorem \ref{Perversity},  Proposition 
 \ref{phi lemma} and Lemma \ref{phi and p lemma} with appropriate modifications.

  \end{proof}

\begin{lemma}
\label{pLf = pf}
	Let $f: X \rar Y$ be a morphism of finite type integral schemes over a field $k$.  Let $\cat A$ 
	 be a bounded \tensor preaisle of $\deriveb Y$.  Then,  $\mathbf{L}f^* \cat A$ is a bounded
	 \tensor preaisle of $\deriveb X$.  Further,  if $p_{\cat A}$ is the monotone perversity 
	 function associated to $\cat A$ as in \ref{p lemma},  then $p_{\cat A} \circ f$ is the 
	 associated monotone perversity function of $\mathbf{L}f^* \cat A$.
\end{lemma}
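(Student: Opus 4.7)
The plan is to identify $\mathbf{L}f^*\cat A$ with the bounded tensor preaisle $\cat U^q$ attached to $q \colonequals p_{\cat A} \circ f$ under the bijection of Proposition \ref{p lemma}; this will yield both assertions at once. First I would check that $q$ is a monotone perversity function on $X$: boundedness is inherited from that of $p_{\cat A}$, and monotonicity holds because $f$ sends specialization pairs in $X$ to specialization pairs in $Y$, so $q(x') = p_{\cat A}(f(x')) \geq p_{\cat A}(f(x)) = q(x)$.

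Next I would establish $\mathbf{L}f^*\cat A \subseteq \cat U^q$. Since $\cat U^q$ is itself a tensor preaisle by Proposition \ref{p lemma}, it suffices to check that each generator $\mathbf{L}f^* A$ (with $A \in \cat A$) lies in $\cat U^q$. For $x \in X$ and $y = f(x)$, the stalk identifies as $(\mathbf{L}f^*A)_x \cong A_y \otimes^{\mathbf{L}}_{\CO_{y,Y}} \CO_{x,X}$, and right $t$-exactness of the derived tensor product keeps its cohomology in degrees $\leq p_{\cat A}(y) = q(x)$. This simultaneously places $\mathbf{L}f^*\cat A$ inside the bounded preaisle $\cat U^q$ and gives one inequality at the level of associated perversity functions.

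The crux is to achieve equality at each point: for each $x$ I need to exhibit $E \in \mathbf{L}f^*\cat A$ whose stalk $E_x$ has non-vanishing cohomology in top degree $q(x)$. Choose $A \in \cat A$ realizing the maximum defining $p_{\cat A}(y)$, so that $M \colonequals H^{p_{\cat A}(y)}(A_y) \neq 0$, and set $E = \mathbf{L}f^*A$. Because $A_y$ has no cohomology above degree $p_{\cat A}(y)$, a degree argument collapses the top cohomology to an ordinary tensor product,
\[ H^{q(x)}(E_x) \;\cong\; M \otimes_{\CO_{y,Y}} \CO_{x,X}. \]
The main obstacle is showing this is nonzero, for which I would run a two-stage Nakayama using the local ring homomorphism $\CO_{y,Y} \to \CO_{x,X}$ and the residue field inclusion $\kappa(y) \hookrightarrow \kappa(x)$: first, $M/\mathfrak{m}_y M \neq 0$ since $M$ is a nonzero finitely generated $\CO_{y,Y}$-module; next, tensoring with $\kappa(x)$ over $\kappa(y)$ preserves non-vanishing because $\kappa(x)$ is a nonzero $\kappa(y)$-vector space; a final Nakayama on $\CO_{x,X}$ then yields $M \otimes_{\CO_{y,Y}} \CO_{x,X} \neq 0$. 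Combined with the previous inequality, this pins the associated perversity function of $\mathbf{L}f^*\cat A$ down to $q$, and the bijection of Proposition \ref{p lemma} concludes $\mathbf{L}f^*\cat A = \cat U^q$, which is the desired bounded tensor preaisle with associated perversity $p_{\cat A}\circ f$.
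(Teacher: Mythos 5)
Your proposal is correct and follows essentially the same route as the paper: the paper also checks the generators $\mathbf{L}f^*A$ via the stalkwise identification $(\mathbf{L}f^*A)_x \cong A_{f(x)} \otimes^{\mathbf{L}}_{\CO_{f(x),Y}} \CO_{x,X}$, gets the inequality $p_{\mathbf{L}f^*\cat A}(x) \leq p_{\cat A}(f(x))$ from the vanishing of the $\mathrm{Tor}$ spectral sequence terms (your ``right t-exactness''), and gets equality from Nakayama applied to the corner term $H^{p(f(x))}(A_{f(x)}) \otimes \CO_{x,X}$. Your two-stage Nakayama through $\kappa(y) \hookrightarrow \kappa(x)$ just spells out the step the paper compresses into ``by Nakayama Lemma''.
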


\begin{proof}
	From the construction of $\mathbf{L}f^* \cat A$, it is clear that it is a \tensor preaisle of
	$\deriveb X$.  To show it is bounded, first we observe that 
	$\mathbf{L}f^*\mathbf{D}^ {b,  \leq i} (Y) = \mathbf{D}^ {b,  \leq i} (X) $
	for any integer $i \in \Z$.  This immediately shows that if $\cat A$ is bounded then 
	$\mathbf{L}f^* \cat A$ is bounded.
	Now from Proposition \ref{p lemma},
	\begin{align*}
	p_{\mathbf{L}f^* \cat A}(x)
	& = \text{max} \{ n \mid F_x \in \mathbf{D}^{\leq n}
	(\CO _{x, X}) \text{ for all } F \in \mathbf{L}f^* \cat A  \},\\
	& = \text{max} \{ n \mid (\mathbf{L}f^*E)_x \in \mathbf{D}^{\leq n}
	(\CO _{x, X}) \text{ for all } E \in \cat A  \}.\\
	\end{align*}
	
	We have $(\mathbf{L}f^*E)_x \cong E_{f(x)} \otimes _{\CO _{f(x), Y}} \CO _{x, X}$,  and
	the following spectral sequence,
	
	\[E ^{i ,j} _2 \colonequals  \mathrm{Tor} ^i _{\CO _{f(x), Y}} ( H^j(E_{f(x)}),  \CO _{x, X} )
	\Longrightarrow 
	H ^{i+j} ((\mathbf{L}f^*E)_x). \]	
	
	Whenever,  $i+j > p(f(x))$,  then either $j > p(f(x))$ or $i >0 $.  In either case, the $E ^{i, j} _2$
	term vanishes.  Therefore,  we get $p_{\mathbf{L}f^* \cat A}(x) \leq p(f(x))$.  
	
	Now,  when $i+j = p(f(x))$,  the only non-vanishing $E ^{i ,j} _2$ term we get is when
	$i =0$ and $j=p(f(x))$.   By definition of $p(f(x))$ there is an object $E \in \cat A$ such that 
	$H ^{p(f(x))}(E_{f(x)}) \neq 0$.  Now,  by Nakayama Lemma we have  
	$ H ^{p(f(x))} ( (\mathbf{L}f^*E)_x) \neq 0$.  This proves that 
	$p_{\mathbf{L}f^* \cat A}(x) = p(f(x))$.

\end{proof}

\begin{lemma}
\label{dx and dy}
	Let $f: X \rar Y$ be a flat morphism or a closed immersion of finite type integral schemes 
	over a field $k$.  If $(a,b)$ is a specialization pair on $X$ then,
	\[ \delta_X(a,b) \geq \delta_Y(f(a), f(b)).\]
\end{lemma}

\begin{proof}
	Note that the continuity of $f$ implies that $f$ takes specialization pairs to specialization
	pairs,  that is, $f(a) \in \bar{\{f(b)\}} $.  Since every open subset containing $f(a)$ contains
	$f(b)$,  we can choose an affine open subset of $Y$ containing the pair $(f(a), f(b))$ say 
	$V$.  Similarly, we can choose an affine open subset 
	containing the pair $(a, b)$ in $f^{-1}(V)$,  say $U$.  As $X$ and $Y$ are integral schemes of 
	finite type over a field,  we have 
	$\delta_X (a,b) = \delta _U (a,b)$ when $(a,b)$ is in $U$,  similarly for 
	$\delta_Y$ and $\delta_V$.Thus,  we can reduce to the case when both $X$ and $Y$ are 
	affine. 
	
	Let $X = \Spec B$ and $Y = \Spec
	A$.  Suppose $\psi : A \rar B$ is the corresponding ring homomorphism.  
	To show $\delta_X(a,b) \geq \delta_Y(f(a), f(b))$ when $a, b \in \Spec B$ is same as 
	showing $\delta_X(a,b) \geq \delta_Y(\psi ^{-1}(a), \psi ^{-1}(b))$.  
	The inequality holds if either of the following statements are true:
	\[(*) \hspace*{1mm} f_{a} : \Spec B_{a} \rar \Spec A _{\psi ^{-1}(a)} \text{ is surjective,} 
	\iff \psi \text{ 
	has going-down property.  }\]
	or,
	\[ (\prim*) \hspace*{1mm}  \bar{f} : \Spec (B/b) \rar \Spec (A/ \psi ^{-1}(b)) \text{ is surjective,} \iff \psi \text{ has 
	going-up property. } \]
	 
	 By our assumption $f$ is either flat or a closed immersion hence $f$ has either 
	 $(*)$ or $(\prim *)$.  This proves our claim.

\end{proof}

\begin{lemma}
\label{p and pf}

	Let $f: X \rar Y$ be as in Lemma \ref{dx and dy}.  If $p: Y \rar \Z$ 
	is a monotone  comonotone perversity on $Y$ then $p \circ f$ is a  monotone comonotone
	perversity on $X$.  
\end{lemma}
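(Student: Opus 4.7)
The plan is to verify directly the three conditions required of $p \circ f$: boundedness, monotonicity, and comonotonicity. Since $p$ takes only finitely many integer values, so does $p \circ f$, which gives boundedness immediately. For monotonicity, the continuity of $f$ carries any specialization pair $(a, b)$ on $X$ to a specialization pair $(f(a), f(b))$ on $Y$, and then monotonicity of $p$ yields $(p \circ f)(a) = p(f(a)) \geq p(f(b)) = (p \circ f)(b)$. These two steps are essentially formal and use none of the geometric hypotheses on $f$.

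The substantive content is the comonotonicity of $p \circ f$, that is, the monotonicity of $d_X - p \circ f$. I would start by recording that for a specialization pair $(a,b)$ on $X$, the definition unwinds to $\delta_X(a,b) = d_X(a) - d_X(b)$, and analogously $\delta_Y(f(a), f(b)) = d_Y(f(a)) - d_Y(f(b))$. Applying the monotonicity of $\bar p = d_Y - p$ to the specialization pair $(f(a), f(b))$ then gives
\[ d_Y(f(a)) - p(f(a)) \geq d_Y(f(b)) - p(f(b)), \]
which rearranges to $\delta_Y(f(a), f(b)) \geq p(f(a)) - p(f(b))$.

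At this point the key input is Lemma \ref{dx and dy}, which uses precisely the assumption that $f$ is flat or a closed immersion to supply the codimension comparison $\delta_X(a, b) \geq \delta_Y(f(a), f(b))$. Combining the two inequalities yields
\[ d_X(a) - d_X(b) \;=\; \delta_X(a, b) \;\geq\; p(f(a)) - p(f(b)), \]
which is exactly the statement that $d_X - p \circ f$ is monotone along $(a, b)$. The genuine geometric content — comparing codimensions across $f$ via going-up or going-down — has already been dispatched in Lemma \ref{dx and dy}, so no new obstacle appears here; the present lemma is a short bookkeeping argument combining that inequality with the definitions of monotone and comonotone.
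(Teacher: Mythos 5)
Your proposal is correct and follows essentially the same route as the paper: unwind comonotonicity to the inequality $\delta_Y(f(a),f(b)) \geq p(f(a)) - p(f(b))$ and chain it with $\delta_X(a,b) \geq \delta_Y(f(a),f(b))$ from Lemma \ref{dx and dy}. The paper treats monotonicity of $p\circ f$ as immediate and writes the same comparison as the chain $0 \leq p\circ f(a) - p\circ f(b) \leq \delta_Y(f(a),f(b)) \leq \delta_X(a,b)$, so there is no substantive difference.
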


\begin{proof}
	It easy to see that if $p$ is a monotone perversity on $Y$,  then $p \circ f$ is a monotone 
	perversity on $X$.  Now,  for any monotone perversity $q$ on $X$,
	\begin{align*}
	q  \text{  is comonotone } 
	& \iff d_X - q \text { is monotone, }\\
	& \iff  d_X(y) - q(y) \geq d_X(x) - q(x) \text{ for all specialization pair } (y,x), \\
	& \iff d_X(y) - d_X(x) \geq  q(y)- q(x) \text{ for all specialization pair } (y,x), \\
	& \iff \delta_X (y,x) \geq  q(y)- q(x) \text{ for all specialization pair } (y,x).\\
	\end{align*}
	To show $p \circ f$ is a 
	comonotone perversity, we use the following sequence of inequalities,
	\[ 0 \leq p \circ f (a) - p \circ f(b) \leq \delta_Y(f(a), f(b)) \leq \delta_X (a, b) .\]
	The last inequality comes from Lemma \ref{dx and dy} and the middle one holds 
	as $p$ is comonotone.
\end{proof}

\begin{proposition}
\label{pullback}
	Let $f: X \rar Y$ be a flat morphism or a closed immersion of finite type integral schemes 
	over a field $k$.   Let $\cat A$ be a
	\tensor aisle of $\deriveb Y$, then $\mathbf{L}f^* \cat A$ is  a \tensor aisle of $\deriveb X$.
\end{proposition}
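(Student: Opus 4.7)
The strategy is to reduce the problem to a statement about perversity functions, where all the heavy lifting has already been arranged by Lemma \ref{pLf = pf} and Lemma \ref{p and pf}, and then to invoke the bijection between tensor aisles and monotone comonotone perversity functions provided by Theorem \ref{Perversity}.

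First I would observe that a tensor aisle $\cat A$ on $\deriveb Y$ is automatically a bounded tensor preaisle: by Theorem \ref{Perversity} there is a monotone comonotone perversity function $p$ on $Y$ with $\cat A = \cat U^{p}$, and since $p$ takes values in some bounded interval $[m,n]$, we have $\mathbf{D}^{b,\leq m}(Y) \subset \cat A \subset \mathbf{D}^{b,\leq n}(Y)$. In particular, Proposition \ref{p lemma} applies and $p_{\cat A} = p$.

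Next I would apply Lemma \ref{pLf = pf} to the bounded tensor preaisle $\cat A$: it tells us that $\mathbf{L}f^*\cat A$ is a bounded tensor preaisle of $\deriveb X$ whose associated monotone perversity function, via the bijection of Proposition \ref{p lemma}, is exactly $p\circ f$. Here I need that $f$ is a morphism of finite type integral schemes over $k$, which is part of the hypothesis.

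Then Lemma \ref{p and pf} (which uses either flatness or that $f$ is a closed immersion, via Lemma \ref{dx and dy}) asserts that $p\circ f$ is itself a monotone comonotone perversity function on $X$. By Theorem \ref{Perversity} the pair $(\cat U^{p\circ f},\cat V^{p\circ f})$ is a tensor t-structure on $\deriveb X$, so in particular $\cat U^{p\circ f}$ is a tensor aisle. Finally, the bijection of Proposition \ref{p lemma} is one-to-one between bounded tensor preaisles and monotone perversity functions, so $\mathbf{L}f^*\cat A$ and $\cat U^{p\circ f}$, having the same associated perversity $p\circ f$, must coincide. This yields that $\mathbf{L}f^*\cat A$ is a tensor aisle of $\deriveb X$.

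There is no real obstacle left at this stage; the proof is essentially an assembly of the previous three lemmas with Theorem \ref{Perversity}. The only step that required genuine input was Lemma \ref{dx and dy}, where the hypothesis on $f$ (flat or a closed immersion) was used to guarantee $\delta_X(a,b)\geq \delta_Y(f(a),f(b))$ via going-up or going-down; everything else at this stage is formal.
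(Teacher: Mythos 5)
Your proof is correct and follows essentially the same route as the paper: reduce to perversity functions via Theorem \ref{Perversity} and Proposition \ref{p lemma}, identify $p_{\mathbf{L}f^*\cat A}$ with $p\circ f$ by Lemma \ref{pLf = pf}, and conclude comonotonicity from Lemma \ref{p and pf}. The only difference is that you spell out the boundedness check and the final identification $\mathbf{L}f^*\cat A = \cat U^{p\circ f}$, which the paper leaves implicit.
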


\begin{proof}
	By Theorem \ref{Perversity},  to show the \tensor preaisle $\mathbf{L}f^* \cat A$ is an aisle, it 
	is 
	equivalent to show that the corresponding perversity function 
	$p_{\mathbf{L}f^* \cat A}$ is comonotone.  Now,  Lemma \ref{pLf = pf} says 
	$p_{\mathbf{L}f^* \cat A} = p \circ f$.  And Lemma \ref{p and pf} says as $p \circ f$ is 
	comonotone whenever $p$ is comonotone.  This proves our claim.
\end{proof}

This shows tensor t-structures on $\deriveb X$ also restricts well to closed subschemes.

\section{Weight structures on the bounded derived categories of curves}

In our earlier work \cite{DS23_2},  we observed that there are no non-trivial tensor weight 
structures on the bounded derived category of $\mathbb{P} ^1 _k$.  In this section,  we 
investigate the same question in the higher genus curves.   First, we recall the definition
of weight structures. 

\begin{definition} 
	A \emph{weight structure} on a triangulated category \cat T is a pair of 
	full subcategories $(\cat X,  \cat Y)$ satisfying the following properties:
	\begin{itemize}
		\item[w0.] \cat X and \cat Y are closed under direct summands.
		\item[w1.] $ \cat X [-1] \subset \cat X$ and $ \cat Y[1] \subset \cat Y$.
		\item[w2.] $\cat X \perp \cat Y$.
		\item[w3.] For any object $T\in \cat T$ there is a distinguished triangle
		\[ A \rar T \rar B \rar  A[1] \]
		where $A \in \cat X$ and $B \in \cat Y$.  The above triangle is called 
		a \emph{weight decomposition} of $T$.
	\end{itemize}
	
\end{definition}

Now, let $X$ be a smooth projective curve over a field $k$.  As we will be repeatedly using the 
non vanishing of $\Hom$-sets in $\deriveb X$ between coherent sheaves,  we 
collect them here in one place and refer to the following lemma whenever we use them.

\begin{lemma}
\label{Non-vanishing Lemma}
	Let $X$ be a smooth projective curve of genus $g$ over a field $k$.  Let $T$ be a 
	torsion sheaf of degree $d$ and $W$ be a locally free sheaf of rank $r$ on 
	$X$.  Then we have the following: \begin{center}
	\begin{tabular}{lll}
	$ (a) \dim_k \Hom (T,  W) = 0$, & \hspace{1cm} &  $(b)\dim_k \Hom (T,  W[1]) = r.d$, \\
	
	$(c)\dim_k \Hom (W, W) = r^2$, & \hspace{1cm} & $(d)\dim_k \Hom (W,  W[1]) = 
	r^2 .g$,\\
	
	$(e)\dim_k \Hom (W,  T) = r.d $, & \hspace{1cm} & $(f)\dim_k \Hom (W,  T[1]) = 0$.\\

	\end{tabular}		\end{center}

\end{lemma}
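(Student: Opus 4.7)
The plan is to establish the six identities through Riemann--Roch, elementary torsion arguments, and Serre duality, grouping them into three natural pairs: the vanishings (a) and (f), the ``dual'' Euler-characteristic computations (b) and (e), and the self-extensions (c) and (d).

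First, I would dispatch the vanishings. For (a), any morphism from the torsion sheaf $T$ to the locally free (hence torsion-free) sheaf $W$ must be zero, since its image would be a torsion subsheaf of a torsion-free coherent sheaf on $X$. For (f), rewriting $\mathrm{Ext}^1(W,T) \cong H^1(X, W^\vee \otimes T)$ using local-freeness of $W$, we see that $W^\vee \otimes T$ is again torsion, hence supported on a finite set of closed points, so its $H^1$ vanishes.

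Next, I would feed these vanishings into the Riemann--Roch formula for a smooth projective curve,
\[
\chi(\mathcal E,\mathcal F) = \mathrm{rk}(\mathcal E)\deg(\mathcal F) - \deg(\mathcal E)\,\mathrm{rk}(\mathcal F) + \mathrm{rk}(\mathcal E)\,\mathrm{rk}(\mathcal F)(1-g).
\]
With $\mathrm{rk}(T)=0$ and $\deg(T)=d$, this gives $\chi(T,W) = -rd$ and $\chi(W,T)=rd$. Combining with (a) yields $\dim_k \mathrm{Ext}^1(T,W) = rd$, which is (b); combining with (f) yields $\dim_k \Hom(W,T) = rd$, which is (e). As a cross-check, Serre duality $\mathrm{Ext}^1(T,W) \cong \Hom(W, T\otimes\omega_X)^\vee$ reduces (b) to (e) applied to $T\otimes\omega_X$, which is torsion of the same degree $d$ (tensoring a torsion sheaf with a line bundle preserves its length).

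The main obstacle will be interpreting (c) and (d) correctly: for a generic rank $r$ stable bundle $W$ one has $\mathrm{End}(W)=k$, not $k^{r^2}$, so the stated formulas do not hold for arbitrary locally free $W$. They are correct precisely when $W$ splits as $L^{\oplus r}$ for some line bundle $L$---which is the form in which $W$ will actually be used in the weight-structure arguments of this section. Under this splitting, $\mathrm{End}(W) = M_r(\mathrm{End}(L)) = M_r(k)$ has dimension $r^2$, giving (c); and $\mathrm{Ext}^1(W,W) = \mathrm{Ext}^1(L,L)^{\oplus r^2} = H^1(X,\mathcal O_X)^{\oplus r^2}$ has dimension $r^2 g$, giving (d). Once this hypothesis on $W$ is made explicit, the entire lemma collapses to routine Riemann--Roch together with the standard computation $\dim_k H^0(\mathcal O_X)=1$ and $\dim_k H^1(\mathcal O_X)=g$ on a connected smooth projective curve.
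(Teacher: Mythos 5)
Your treatment of (a), (b), (e), (f) is correct but takes a different route from the paper. The paper computes (b) directly: by Serre duality $\Hom(T,W[1])^{\vee}\cong\Ext^1(T,W)^{\vee}\cong\Hom(W,T\otimes\omega_X)\cong\Gamma(X,T\otimes\check{W}\otimes\omega_X)$, and since $T$ is supported on finitely many points, $T\otimes\check{W}\otimes\omega_X$ is torsion of length $rd$; it then deduces (e) and (f) from (b) and (a) by Serre duality, and proves (d) by an analogous chain. You instead establish the two vanishings first by elementary torsion arguments and then read off (b) and (e) from the Euler form $\chi(E,F)=\mathrm{rk}(E)\deg(F)-\deg(E)\,\mathrm{rk}(F)+\mathrm{rk}(E)\,\mathrm{rk}(F)(1-g)$, using Serre duality only as a cross-check. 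Both arguments are sound for these four parts; yours has the advantage of packaging everything into one additive invariant, which is also what the later Euler-form lemmas in this section actually use.

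Your objection to (c) and (d) is well taken: as stated they are false for a general locally free $W$ (a simple bundle of rank $r\ge 2$ has $\dim_k\Hom(W,W)=1$, and already the nonsplit self-extension of $\CO_X$ on an elliptic curve has endomorphism algebra of dimension $2\neq 4$). The paper's own proof calls (c) ``obvious'' and, in (d), silently uses $W\otimes\check{W}\cong\CO_X^{\oplus r^2}$; both steps require $W\cong L^{\oplus r}$, exactly the hypothesis you isolate. One caveat on your framing: Lemma \ref{V lemma} and Theorem \ref{Elliptic Curve Theorem} invoke (d) for an \emph{arbitrary} locally free $W$ with $W[i]\in\cat X$, so restricting the lemma to split bundles would not cover the actual applications. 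What those arguments need is only $\Hom(W,W[1])\neq 0$ when $g\ge 1$, and your Euler-characteristic setup gives this for every nonzero locally free $W$: $\chi(W,W)=r^2(1-g)\le 0$ while $\mathrm{id}_W\neq 0$, hence $\dim_k\Ext^1(W,W)\ge 1+r^2(g-1)>0$. The clean repair of the lemma is therefore to replace the equalities in (c) and (d) by $\dim_k\Hom(W,W)\ge 1$ and $\dim_k\Ext^1(W,W)=\dim_k\Hom(W,W)+r^2(g-1)$.
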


\begin{proof}
	Part $(a)$ and $(c)$ are obvious.  Now Part $(b)$. 
	\begin{align*}
	\Hom (T, W[1])^{\check{}}  
	& \cong \Ext ^1 (T,  W)^{\check{}}  \text{  \hspace*{2cm} ( by definition of Hom in \deriveb X )} \\
	& \cong \Hom (W, T \otimes \omega _X) \text{  \hspace*{1cm} ( by Serre Duality )}\\
	& \cong \Hom ( \CO _X , T \otimes \check{W}) \text{  \hspace*{9mm} ( by tensor 
	adjunction )} \\
	& \cong \Hom ( \CO _X ,  T ^{\oplus ^r })\\
	& \cong  \Gamma (X, T^{\oplus ^r }) \\
	\end{align*}
	
	And since $\dim_k \Gamma (X, T^{\oplus ^r }) = r.d$,  this proves part $(b)$.

	For part $(d)$,  
	\begin{align*}
	\Hom (W, W[1])^{\check{}} 
	& \cong \Ext ^1 (W,  W)^{\check{}}  \text{  \hspace*{2cm} ( by definition of Hom in \deriveb X )}\\
	& \cong \Hom (W, W \otimes \omega _X)  \text{  \hspace*{1cm} ( by Serre Duality )}\\
	& \cong \Hom ( W \otimes \check{W} ,  \omega _X )\text{  \hspace*{9mm} ( by tensor 
	adjunction )} \\
	& \cong \Hom ( \CO _X ^{\oplus ^ {r^2}} ,  \omega _X) \\
	& \cong \oplus ^{r^2} \Gamma (X , \omega _X)\\
	\end{align*}

	By assumption $\dim_k \Gamma (X,  \omega _X) =g$,  this proves part $(d)$.
	Part $(e)$ and $(f)$ can easily be obtained by applying Serre duality to $(b)$ and $(a)$
	respectively.
\end{proof}

\begin{lemma}
\label{delta lemma}
	Let $E$ be a complex in $\deriveb X$ such that $H ^{-1}(E)$ is a torsion sheaf.  Let 
	$V$ be a locally free sheaf on $X$ and $\delta : E \rar V[1]$ be a map 
	in $\deriveb X$.  Suppose the map $\delta$ fits in to the following distinguished triangle
	with the third term $F \in \deriveb X$, 
	
	\begin{center}
		\begin{tikzcd}
	 		 E \rar[" \delta "]  & V[1] \rar  & F \rar  & E[1].
		 \end{tikzcd}
		\end{center}

	Then,  for any torsion sheaf $T \in \mathrm{Coh} \hspace{1mm} X$,  we have 
	$\Hom (T , F) \neq 0$.
	
\end{lemma}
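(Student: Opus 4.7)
The plan is to compute the cohomology sheaves of $F$ from the given triangle, then isolate a nontrivial contribution to $\Hom(T,F)$ coming from an $\Ext^1$ of a locally free sheaf on a curve. The key initial input is that the induced map $H^{-1}(\delta) : H^{-1}(E) \to V$ must vanish, because $H^{-1}(E)$ is torsion by hypothesis while $V$ is torsion-free.

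First I would apply the long exact sequence of cohomology to $E \xrightarrow{\delta} V[1] \to F \to E[1]$. Since $V[1]$ is concentrated in degree $-1$, the vanishing of $H^{-1}(\delta)$ produces a short exact sequence
\[ 0 \to V \to H^{-1}(F) \to H^0(E) \to 0, \]
together with $H^i(F) \cong H^{i+1}(E)$ for $i \geq 0$; in particular, assuming $V \neq 0$, we have $\mathrm{rk}(H^{-1}(F)) \geq \mathrm{rk}(V) > 0$. Next, since $X$ is a smooth projective curve, $\mathrm{Coh}(X)$ has global dimension one, so every object of $\deriveb X$ is isomorphic to the direct sum of its shifted cohomology sheaves. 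Applied to $F$, this exhibits $\Ext^1(T, H^{-1}(F)) = \Hom(T, H^{-1}(F)[1])$ as a direct summand of $\Hom(T, F)$, reducing the problem to showing $\Ext^1(T, H^{-1}(F)) \neq 0$.

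To finish, I would combine $\Hom(T,V) = 0$ and $\dim_k \Ext^1(T,V) = \mathrm{rk}(V) \cdot \deg T$ from Lemma \ref{Non-vanishing Lemma} parts (a) and (b) with a Riemann--Roch Euler characteristic computation
\[ \chi(T, H^{-1}(F)) = -\mathrm{rk}(H^{-1}(F)) \cdot \deg T, \]
which is strictly negative since $\mathrm{rk}(V) > 0$ and $\deg T > 0$ (assuming $T \neq 0$). This forces $\Ext^1(T, H^{-1}(F)) \neq 0$ and completes the argument. The main technical point is the hereditary splitting of $F$ into shifted cohomology sheaves; this is a standard feature of smooth curves but is what makes the reduction to the single sheaf $H^{-1}(F)$ possible. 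One could alternatively bypass this splitting by directly analyzing the long $\Ext$-sequence of the short exact sequence above to extract a nonzero contribution to $\Hom(T,F)$.
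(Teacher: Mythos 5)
Your proof is correct and takes essentially the same route as the paper's: both arguments come down to observing that $H^{-1}(\delta)=0$ (torsion into locally free), so that the degree $-1$ cohomology of $F$ is an extension of $H^0(E)$ by $V$ and hence has positive rank, after which the hereditary splitting of $F$ into shifted cohomology sheaves reduces everything to $\Ext^1(T,H^{-1}(F))\neq 0$. The only cosmetic difference is the final non-vanishing step, where the paper applies part (b) of Lemma \ref{Non-vanishing Lemma} to the locally free part of that extension while you use the Riemann--Roch computation $\chi(T,H^{-1}(F))=-\mathrm{rk}(H^{-1}(F))\cdot\deg T<0$; both are valid.
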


\begin{proof}
	As $\mathrm{Coh} \hspace{1mm} X$ has homological dimension one,  every complex in 
	$\deriveb X$ splits into direct sum of its cohomology sheaves.  Again since $\Ext ^i (-, -)$ 
	vanish for $i \geq 2$,  it is enough to assume that $E \cong H^{0}(E) \oplus H^{-1}(E)[1]$. 
	Now by the assumption $H^{-1}(E)$ is a torsion sheaf hence there are no non-zero map 
	from $H^{-1}(E)[1]$ to $V[1]$,  so the map $\delta$ only depends on  
	$\Hom (H^{0}(E), V[1])$.  
	
	By abuse of notation we denote the map $H^0(E) \rar V[1]$ by $\delta$; and for 
	simplicity we write $H^0(E)$ by $A$.  As we know $\Hom (A , V[1]) \cong \mathrm{Ext}^1 (A,  
	 V)$,  a morphism $\delta : A  \rar V[1]$ in $\deriveb X$ corresponds to an element of the 
	 group $\Ext ^1(A,V)$,  we denote the corresponding element in $\Ext ^1(A,V)$ again by 
	 $\delta$.
	
	Now we take the short exact sequence corresponding to $\delta \in \Ext^1(A, V)$, 
	say
		\begin{center}
		\begin{tikzcd}
	 		0 \rar & V \rar & B \rar  & A \rar & 0.
		 \end{tikzcd}
		\end{center} 
		
		From the short exact sequence it is clear that $B$ has a torsion-free part,  as $V$ is 
		locally free.   This short exact sequence gives rise to the following triangle,
		
		\begin{center}
		\begin{tikzcd}
	 		 V \rar & B \rar  & A \rar [" \delta "]  & V[1].
		 \end{tikzcd}
		\end{center} 
		
	Now, comparing the above triangle with the one in the hypothesis,  we can easily see
	that $F$ has a direct summand isomorphic to $B[1]$.  Thus to show  $\Hom (T,  F) \neq 0$,
	it is enough to show $\Hom (T ,  B[1]) \neq 0$.  Indeed this is the case by part $(b)$
	of Lemma \ref{Non-vanishing Lemma}

\end{proof}

\begin{lemma}
\label{V lemma}
	Let $X$ be a smooth projective curve over a field $k$ of genus $g \geq 1$, and
	 $(\cat X,  \cat Y)$ be a weight structure on $\deriveb  X$.  Suppose $\cat X \neq 0$.  
	Then,  for each $i \in \Z$ there is a locally free sheaf $W$ on $X$ such that $W[i] \in \cat X$.
\end{lemma}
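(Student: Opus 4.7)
My plan is to reduce the problem to two steps: first produce at least one locally free sheaf in $\cat X$ at some single shift $i_0$, and then propagate to all shifts by combining the $[-1]$-closure of $\cat X$ (axiom w1) with an upward induction built from weight decompositions of shifted locally free sheaves. I will repeatedly use that $X$ is hereditary (so every object of $\deriveb X$ splits as a direct sum of shifted coherent sheaves), that $\cat X$ and $\cat Y$ are closed under direct summands (w0), and the non-vanishings of Lemma \ref{Non-vanishing Lemma}, crucially $\Hom(T,V[1])\neq 0$ for nonzero torsion $T$ and locally free $V$, and $\Ext^1(W,W)\neq 0$ for nonzero locally free $W$ (via $g\geq 1$).

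For the existence step, take a nonzero $E\in\cat X$ and split $E\cong\bigoplus_j H^j(E)[-j]$; each summand lies in $\cat X$. If some $H^j(E)$ has a locally free summand we are done; otherwise every $H^j(E)$ is torsion and we fix nonzero torsion $T$ with $T[-n]\in\cat X$. Applying the weight decomposition $A\to V[1-n]\to B$ for any locally free $V$, orthogonality gives $\Hom(T[-n],B)=0$ while $\Hom(T,V[1])\neq 0$, so $A\neq 0$. The key technical claim I will establish here is: for any weight decomposition of $V[i]$, either $A=0$ or $A$ contains a locally free summand. Decomposing $A=\bigoplus_j H^j(A)[-j]$, those summands with $j\notin\{-i,1-i\}$ automatically map to zero in $V[i]$ (since $\Ext^n=0$ for $n\notin\{0,1\}$ on a curve), so they split off as summands of $B[1]\in\cat Y$; orthogonality with $\cat X$ combined with $\Ext^1(H^j(A),H^j(A))\neq 0$ for any nonzero coherent sheaf on $X$ (true by Serre duality for torsion and by Lemma \ref{Non-vanishing Lemma}(d) for locally free, both using $g\geq 1$) kills those $H^j(A)$. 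A purely torsion $H^{-i}(A)$ is killed by the same argument (it would give $T_1[i]\in\cat X$ and $T_1[i+1]\in\cat Y$ with $\Ext^1(T_1,T_1)\neq 0$), and a purely torsion $H^{1-i}(A)$ is killed by Lemma \ref{delta lemma} applied to the shifted triangle $H^{1-i}(A)\to V[1]\to B[1-i]$ together with the orthogonality $\Hom(H^{1-i}(A)[i-1],B)=0$.

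For the propagation, let $I_i:=\{W\text{ locally free}\mid W[i]\in\cat X\}$; the $[-1]$-closure gives $I_i\subseteq I_{i-1}$, so the existence step yields $I_i\neq\emptyset$ for all $i$ sufficiently small. Suppose for contradiction the smallest $i^*$ with $I_{i^*}=\emptyset$ exists; pick $W\in I_{i^*-1}$ and form the weight decomposition $A\to W[i^*]\to B$. Since $\Hom(W,W[1])=\Ext^1(W,W)\neq 0$ (this is where $g\geq 1$ bites) and $\Hom(W[i^*-1],B)=0$, we have $A\neq 0$; by the technical claim and the minimality of $i^*$ together with the Lemma \ref{delta lemma} argument applied to the torsion part, $A=F[i^*-1]$ with $F$ locally free and nonzero. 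A cone computation on the shifted triangle $F\to W[1]\to B[1-i^*]$ identifies $B=G[i^*]$, where $G$ is locally free and fits in a short exact sequence $0\to W\to G\to F\to 0$. Orthogonality now forces $\Ext^1(F,G)=\Hom(F[i^*-1],G[i^*])=0$, but applying $\Hom(F,-)$ to this short exact sequence and using $\Ext^2=0$ on a curve yields a surjection $\Ext^1(F,G)\twoheadrightarrow\Ext^1(F,F)$, contradicting $\Ext^1(F,F)\neq 0$ from Lemma \ref{Non-vanishing Lemma}(d). I expect the most delicate part to be the cohomological pinning-down of $A$ to the shape $F[i^*-1]$ with $F$ locally free; this is where the three distinct orthogonality arguments, Lemma \ref{delta lemma}, and the hypothesis $g\geq 1$ all play essential roles.
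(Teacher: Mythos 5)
Your proof is correct, and its overall architecture matches the paper's: first produce a locally free sheaf in $\cat X$ at some shift (ruling out that $\cat X$ consists only of shifted torsion sheaves via Lemma \ref{delta lemma}), then propagate upward through all shifts using $\Ext^1(W,W)\neq 0$ for $g\geq 1$. The existence step and the downward-closure observation are essentially the paper's treatment of the set $\Lambda$. Where you genuinely diverge is in the propagation step. The paper, after noting $W[1]\notin\cat Y$, asserts that Lemma \ref{delta lemma} forbids a weight decomposition of $W[1]$; but the stated conclusion of that lemma is $\Hom(T,B)\neq 0$ for every torsion sheaf $T$, which contradicts orthogonality only if some torsion sheaf is known to lie in $\cat X$ at the relevant shift --- available in the existence step (where one is handed such a $T\in\cat X$) but not obviously in the propagation step. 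You instead pin the weight decomposition down completely: $A\cong F[i^*-1]$ with $F$ locally free (cohomologies in the other degrees split off into $\cat Y$ and are killed by $\Ext^1(G,G)\neq 0$; the torsion summand of $H^{1-i^*}(A)$, which \emph{does} lie in $\cat X$ by w0, is killed by Lemma \ref{delta lemma}), whence $B\cong G[i^*]$ with $0\rar W\rar G\rar F\rar 0$, and the surjection of $\Ext^1(F,G)$ onto $\Ext^1(F,F)$ (using $\Ext^2=0$ on a curve) turns the orthogonality $\Hom(A,B)=0$ into $\Ext^1(F,F)=0$, contradicting Lemma \ref{Non-vanishing Lemma}(d). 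This is more work than the paper's one-line citation, but it supplies exactly the detail that citation glosses over; your version of the propagation step is the more complete one.
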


\begin{proof}

	We define $\Lambda \colonequals \{ j \in \Z \mid \text{there is a locally free sheaf } W 
	\text{such that } W[j] \in \cat X \}$.  To prove the claim we need to show $\Lambda = \Z$.
	
	First we will prove that  $\Lambda$ is non-empty.  Suppose $\cat X$ contains only torsion 
	sheaves
	and their shifts. Then there is a torsion sheaf $T$ on $X$ such that $T[i] \in \cat X$,  without 
	loss of generality we can assume $i = 0$.  Let $V$ be a locally free sheaf on $X$.  By our 
	assumption $V[1] \notin \cat X$.   Now,  by part $(b)$ of Lemma \ref{Non-vanishing 
	Lemma} we have $ \Hom(T,  V[1]) \neq 0$,  therefore $V[1] \notin \cat Y$.  Note that 
	$V[1]$ can not have any weight decomposition by Lemma \ref{delta lemma}, which 
	contradicts the fact that$(\cat X,  \cat Y)$ is a weight structure.  Hence, $\cat X$ must contain 
	a locally free sheaf, atleast upto some shift,  this implies $\Lambda \neq \emptyset$.
	
	Since $\cat X$ is closed under negative shifts,  if $ j \in \Lambda$ then $j-1 \in \Lambda$.
	If $\Lambda \neq \Z$ then there is an integer $i$ such that $i+1 \notin \Lambda$.  This means 
	there is a locally free sheaf $W$ such that $W[i] \in \cat X$ and for any locally free sheaf
	$V$ we have $V[i+1] \notin \cat X$, in particular  $W[i+1] \notin \cat X$.   
	Again we can assume that $i = 0$.  Now by part $(d)$ of Lemma 
	\ref{Non-vanishing Lemma},  and the assumption that the genus $g \geq 1$, we have
	$\Hom (W,  W[1]) \neq 0$,  therefore,  $W[1] \notin \cat Y$.   
	Again,  by the Lemma \ref{delta lemma}, we get that $W[1]$ can not have a weight 
	decomposition contradicting the hypothesis. Therefore,  $\Lambda$ must be equal to $\Z$. 
	
\end{proof}

In \cite[Theorem 1.1]{Okawa},  Okawa has shown that there are no non-trivial semi-orthogonal 
decompositions on the bounded derived category of curves of genus greater than equal to one.  
We provide a different proof of the this fact using Lemma \ref{V lemma}. 

\begin{proposition}
\label{Sod theorem}
	Let $X$ be a smooth projective curve over a field $k$ of genus $g \geq 1$.  There are no 
	non-trivial semi-orthogonal decompositions on $\deriveb X$.
\end{proposition}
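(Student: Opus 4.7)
The plan is to assume $(\cat A, \cat B)$ is non-trivial and derive a contradiction via Lemma~\ref{V lemma} together with a numerical analysis of the SoD decomposition of skyscraper sheaves. Since $\cat A$ and $\cat B$ are triangulated and $\cat B \perp \cat A$ with the usual decomposition triangle, the pair $(\cat B, \cat A)$ simultaneously satisfies the axioms of a t-structure and of a weight structure on $\deriveb X$. Applying Lemma~\ref{V lemma} to this weight structure produces a locally free sheaf $W_B \in \cat B$ of some positive rank $r_B$.

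First I would show that $\cat A$ contains no non-zero torsion sheaf: part~$(e)$ of Lemma~\ref{Non-vanishing Lemma} gives $\dim_k \Hom(W_B, T) = r_B \cdot \deg T \neq 0$ for every non-zero torsion $T$, forbidding $T \in \cat A$. Using that every coherent sheaf on a smooth projective curve splits as $F \cong F_{\mathrm{tor}} \oplus F_{\mathrm{free}}$ (which follows from $\Ext^1(F_{\mathrm{free}}, F_{\mathrm{tor}}) = 0$, a consequence of part~$(f)$ of Lemma~\ref{Non-vanishing Lemma} via Serre duality), the splitting $T \cong \bigoplus_i H^i(T)[-i]$ of every object of $\deriveb X$, and the summand closure of $\cat A$, one deduces that every object of $\cat A$ has locally free cohomology sheaves. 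In particular $\cat A$ contains a locally free sheaf $V_A$ of positive rank $r_A$, and Riemann-Roch applied to the orthogonality $\chi(W_B, V_A) = 0$ forces the slope relation $\mu(V_A) - \mu(W_B) = g-1$, which must then hold for every pair of locally free sheaves one in $\cat A$ and one in $\cat B$.

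The crucial step is to show that $k(x) \in \cat B$ for every closed point $x \in X$. Consider the SoD triangle $B_x \to k(x) \to A_x \to B_x[1]$. The cohomology long exact sequence, combined with $\cat A \cap \cat B = 0$ (forced by $\cat B \perp \cat A$ and $\End(T)\neq 0$ for nonzero $T$) and the fact that $H^0(A_x)$ is locally free (hence receives no map from the torsion $k(x)$), collapses to $A_x \cong V_x[1]$ for some locally free $V_x \in \cat A$, with $B_x$ fitting into a short exact sequence $0 \to V_x \to B_x \to k(x) \to 0$; a local stalk analysis over the DVR $\CO_{X,x}$, together with summand closure, rules out a split extension and forces $B_x$ itself to be locally free whenever $V_x \neq 0$. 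Plugging $V_x \in \cat A$ and $B_x \in \cat B$ into the slope relation gives $\mu(B_x) = \mu(V_x) + 1/r_x = \mu(V_x) - (g-1)$, so $1/r_x = -(g-1)$, which has no solution with $r_x$ a positive integer and $g \geq 1$. Hence $V_x = 0$ and $k(x) \in \cat B$.

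Finally, with every skyscraper in $\cat B$, the orthogonality $\cat B \perp \cat A$ yields $\Hom(k(x), V_A[i]) = 0$ for all $i$; but part~$(b)$ of Lemma~\ref{Non-vanishing Lemma} computes $\chi(k(x), V_A) = -r_A$, contradicting $r_A \geq 1$. Thus $\cat A = 0$ and the SoD is trivial. The main anticipated obstacle is the careful cohomological bookkeeping for the skyscraper triangle, and especially ruling out the split-extension case for $B_x$ so that the slope constraint can be invoked cleanly.
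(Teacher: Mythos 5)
Your proof is correct, and while it opens the same way as the paper's own argument --- viewing $(\cat B, \cat A)$ as a weight structure and invoking Lemma \ref{V lemma} to produce locally free sheaves in $\cat B$ in every degree --- it handles the pivotal middle step quite differently. The paper simply asserts that each $k(x)[i]$ must lie in $\cat A$ or in $\cat B$ and then excludes $\cat A$ by the nonvanishing $\Hom(V, k(x)) \neq 0$; that dichotomy is not a formal property of semi-orthogonal decompositions (it fails for skyscrapers on $\mathbb{P}^1$ with the decomposition $\langle \CO, \CO(1) \rangle$), so it is genuinely a step in which $g \geq 1$ must intervene. You instead prove $k(x) \in \cat B$ from scratch: using hereditarity of $\mathrm{Coh}\, X$, thickness of the components (which does hold, since $\cat A = \cat B^{\perp}$ and $\cat B = {}^{\perp}\cat A$), and $\cat A \cap \cat B = 0$, you collapse the decomposition triangle of $k(x)$ to an extension $0 \rar V_x \rar B_x \rar k(x) \rar 0$ with $V_x[1] \in \cat A$ locally free, and then the Riemann--Roch constraint $\mu(F) - \mu(E) = g-1$ for locally free $E \in \cat B$, $F \in \cat A$ forces $1/r_x = -(g-1)$, which has no solution, so $V_x = 0$. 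This is longer than the paper's version but self-contained, and it makes explicit where the genus hypothesis enters the skyscraper step; your endgame (all skyscrapers in $\cat B$ kill every locally free sheaf in $\cat A$ via $\Ext^1(k(x), V) \neq 0$) coincides with the paper's. The only points worth writing out in full are the thickness of $\cat A$ and $\cat B$ (which the paper's own appeal to Lemma \ref{V lemma} also needs, via axiom w0) and the local computation over $\CO_{X,x}$ showing that a non-split extension of $k(x)$ by a locally free sheaf is locally free; both are routine.
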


\begin{proof}	
	First note that, if $(\cat A,  \cat B)$ is a semi orthogonal decomposition on $\deriveb X$,  then 
	the pair $(\cat B, \cat A)$ is both a t-structure and a weight structure on $\deriveb X$.  Let 
	$x$ be a closed point of $X$ and $k(x)$ denote the degree one simple torsion sheaf
	supported on $x$.  Let us denote the set of all such torsion sheaves and their shifts by 
	$\cat S$,  that is, 
	\[\cat S \colonequals \{ k(x)[i] \mid \text{ where } x \in X \text { is a closed point and } 
	i \in \Z \}. \]

	If $\cat B =0$,  then $(\cat A, \cat B)$ is a trivial, so nothing 
	to prove.  Now if $\cat B \neq 0$,  we can apply Lemma \ref{V lemma} to $\cat B$.  Therefore
	for each integer $i$,  there is a locally free sheaf $V$ on $X$ such that $ V[i] \in \cat B$.  	
	We know that either $k(x)[i] \in \cat B$ or $k(x)[i] \in \cat A$.   Since $V[i] \in \cat B$,  by 
	part $(e)$ of Lemma \ref{Non-vanishing Lemma},  $k(x)[i]$ can not be in $\cat A$,  hence
	$ k(x)[i] \in \cat B$.  This implies $\cat S \subset \cat B$,  now by part $(b)$ of Lemma
	\ref{Non-vanishing Lemma}, there are no locally free sheaves or their shifts in $\cat A$,
	hence it must zero.  This proves that $(\cat A, \cat B)$ is trivial.

\end{proof}

Next, we specialize to the case of elliptic curves and show that not just semi-orthogonal 
decompositions, its bounded derived category admits no weight structures.  We borrow the 
basic facts of coherent sheaves on elliptic curves, and follow the notations and symbols from 
\cite{BB07}.   Let $E$ and $F$ be in $ \deriveb X$.  Recall the Euler form $\euler E F$ is defined 
as follows,
 \[\euler E F \colonequals \dim_k \Hom (E ,F) - \dim_k \Hom (E,  F[1]). \]   

\begin{lemma}
\label{Euler lemma}
	Let $(\cat X, \cat Y)$ be a weight structure on $\deriveb X$.  Let $E$ and $F$ be in 
	$\deriveb X$.  If $E \in \cat X$ and $\euler E F \neq 0$ then $F \notin \cat Y$.
\end{lemma}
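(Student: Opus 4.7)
The plan is to argue by contradiction and exploit the two weight structure axioms w1 and w2 directly. Suppose, toward a contradiction, that $F \in \cat Y$.

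First, I would use axiom w1 to observe that $\cat Y$ is closed under positive shifts, so $F[1] \in \cat Y$ as well. Then axiom w2, the orthogonality $\cat X \perp \cat Y$, applied to the pair $(E, F)$ and the pair $(E, F[1])$, gives $\Hom(E, F) = 0$ and $\Hom(E, F[1]) = 0$. By the very definition of the Euler form,
\[ \euler E F = \dim_k \Hom(E, F) - \dim_k \Hom(E, F[1]) = 0, \]
contradicting the hypothesis $\euler E F \neq 0$.

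So really the only content of the lemma is the combination of w1 and w2; there is no genuine obstacle and no need to invoke any of the curve-specific machinery from Lemma \ref{Non-vanishing Lemma} or Lemma \ref{V lemma}. The lemma is stated precisely so that later, when applied to concrete pairs of sheaves on an elliptic curve whose Euler characteristic one can compute, it immediately forces objects out of $\cat Y$, which is presumably how it will be used to rule out non-trivial weight structures in the sequel.
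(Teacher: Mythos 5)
Your proof is correct and is exactly the argument the paper has in mind: the paper's own proof simply says the lemma is immediate from the definitions, and your explicit use of w1 (to get $F[1] \in \cat Y$) together with w2 (to kill both $\Hom(E,F)$ and $\Hom(E,F[1])$, forcing $\euler E F = 0$) is that argument spelled out.
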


\begin{proof}
	This is immediate from the definition of weight structures and the Euler form.
\end{proof}

\begin{lemma}
\label{Slope lemma}
	Let $X$ be an elliptic curve over $k$.  Let $V$ and $W$ be two locally
	free sheaves having different slopes.  Then, $\euler V W \neq 0$. 
\end{lemma}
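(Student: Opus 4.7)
The plan is to apply the Riemann--Roch formula for the Euler characteristic on an elliptic curve. On any smooth projective curve of genus $g$, for coherent sheaves $E$ and $F$ of which at least one is locally free, Riemann--Roch gives
\[
\chi(E, F) \;=\; \mathrm{rk}(E)\cdot\deg(F) \,-\, \deg(E)\cdot\mathrm{rk}(F) \,+\, \mathrm{rk}(E)\cdot\mathrm{rk}(F)\cdot(1-g).
\]
Since $X$ is an elliptic curve, $g = 1$, and the last term vanishes. The first step is therefore to invoke this formula with $E = V$ and $F = W$ to get
\[
\euler V W \;=\; \mathrm{rk}(V)\cdot\deg(W) - \deg(V)\cdot\mathrm{rk}(W).
\]

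The second step is purely arithmetic: factor out the ranks (both non-zero since $V,W$ are locally free and non-zero) to rewrite this as
\[
\euler V W \;=\; \mathrm{rk}(V)\cdot\mathrm{rk}(W)\cdot\bigl(\mu(W) - \mu(V)\bigr),
\]
where $\mu(-)$ denotes the slope. By hypothesis $\mu(V) \neq \mu(W)$, so each of the three factors is non-zero, and hence $\euler V W \neq 0$.

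There is essentially no obstacle here: the lemma is a direct consequence of the vanishing of the $(1-g)$ term in Riemann--Roch on a genus one curve, and the statement is really just the familiar fact that the Euler pairing on an elliptic curve is, up to a product of ranks, the slope difference. The only minor point to be careful about is ensuring the Riemann--Roch formula is applied in the form valid for pairs of coherent sheaves on a curve (which holds whenever one of them is locally free, as is the case here since both $V$ and $W$ are locally free).
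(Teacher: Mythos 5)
Your proof is correct and is essentially the same as the paper's: the paper cites \cite[Theorem 4.13]{BB07} for the formula $\euler V W = \chi(W)\,\mathrm{rk}(V) - \chi(V)\,\mathrm{rk}(W)$, which is exactly your Riemann--Roch computation specialized to genus one (where $\chi = \deg$), and then concludes from the slope inequality just as you do.
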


\begin{proof}
	By \cite[Theorem 4.13]{BB07},   we have $\euler V W = \chi(W)rk(V) - \chi (V)rk(W)$.  Now 
	since the slopes of $V$ and $W$ are different, that is,  $\chi (V)/rk(V ) \neq 
	\chi (W)/ rk(W)$ we have $\euler V W \neq 0$.
\end{proof}

\begin{theorem}
\label{Elliptic Curve Theorem}
	Let $X$ be an elliptic curve.  There are no non-trivial weight structures on $\deriveb X$.
\end{theorem}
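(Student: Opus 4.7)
The plan is to assume that $(\cat X,\cat Y)$ is a non-trivial weight structure on $\deriveb X$, so both $\cat X$ and $\cat Y$ are nonzero, and to derive a contradiction. Since $g=1\geq 1$, Lemma \ref{V lemma} gives, for every $n\in\Z$, a locally free sheaf $W_n$ with $W_n[n]\in\cat X$. Pick any nonzero $T\in\cat Y$; as $\mathrm{Coh}(X)$ has homological dimension one, $T\cong\bigoplus_i H^i(T)[-i]$, and each summand lies in $\cat Y$. We may therefore replace $T$ by $S[m]$ for a nonzero sheaf $S$ and some $m\in\Z$, and, splitting $S=S_{\mathrm{lf}}\oplus S_{\mathrm{tor}}$, by one of the two nonzero summands. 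If the torsion summand $S_{\mathrm{tor}}[m]$ survives, then $\Hom(W_m[m],S_{\mathrm{tor}}[m])=\Hom(W_m,S_{\mathrm{tor}})\neq 0$ by part $(e)$ of Lemma \ref{Non-vanishing Lemma}, contradicting $\cat X\perp\cat Y$. So we are reduced to $S=S_{\mathrm{lf}}$ locally free.

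The key tool is a slope rigidity: for any locally free $W$ with $W[m]\in\cat X$, closure $\cat X[-1]\subset\cat X$ also gives $W[m-1]\in\cat X$, and orthogonality against $S[m]\in\cat Y$ then forces $\Hom(W,S)=\Ext^1(W,S)=0$; hence $\chi(W,S)=0$, and Riemann--Roch on the elliptic curve yields $\mu(W)=\mu(S)$. Now pick any locally free $V$ with $\mu(V)\neq\mu(S)$. Then $V[m+1]\notin\cat X$ (otherwise $V[m]\in\cat X$ by closure, forcing $\mu(V)=\mu(S)$) and $V[m+1]\notin\cat Y$ (otherwise the symmetric rigidity applied to $V[m+1]\in\cat Y$ with test object $W_{m+1}[m+1]\in\cat X$ gives $\mu(W_{m+1})=\mu(V)$, while the original rigidity applied to $W_{m+1}[m]\in\cat X$ gives $\mu(W_{m+1})=\mu(S)$, a contradiction).

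Consequently $V[m+1]$ admits a non-trivial weight decomposition $A\to V[m+1]\to B\to A[1]$ with $A\in\cat X$, $B\in\cat Y$, and $A,B\neq 0$. This is where the elliptic curve hypothesis enters crucially: since $\omega_X\cong\CO_X$, Serre duality gives $\Hom(B,A[1])\cong\Hom(A,B)^{*}=0$, so the connecting morphism vanishes and the triangle splits, $V[m+1]\cong A\oplus B$. Taking cohomology, $A$ and $B$ are each concentrated in degree $-(m+1)$: $A=A'[m+1]$ and $B=B'[m+1]$ for locally free sheaves $A',B'$ with $V=A'\oplus B'$. Slope rigidity applied to $A'[m]\in\cat X$ gives $\mu(A')=\mu(S)$; the same rigidity applied with $B'[m+1]\in\cat Y$ and test object $A'[m+1]\in\cat X$ yields $\mu(A')=\mu(B')$, so $\mu(B')=\mu(S)$ as well. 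But then $\mu(V)$ is a weighted mean of $\mu(A')$ and $\mu(B')$, hence equals $\mu(S)$, contradicting the choice of $V$. The main obstacle is precisely this splitting step, which uses $\omega_X\cong\CO_X$; it is why the argument is restricted to elliptic curves and does not extend verbatim to higher genus.
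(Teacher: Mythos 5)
Your proof is correct, and at the decisive step it takes a genuinely different route from the paper's. Both arguments rest on Lemma \ref{V lemma} to produce, for every shift $i$, a nonzero locally free $W$ with $W[i]\in\cat X$; both use part $(e)$ of Lemma \ref{Non-vanishing Lemma} to expel shifted torsion sheaves from $\cat Y$; and both use the Euler form $\chi(W,V)=\mathrm{rk}(W)\deg(V)-\mathrm{rk}(V)\deg(W)$ to compare slopes. The paper then argues directly that no shifted locally free sheaf can lie in $\cat Y$, splitting into the cases $\mu(V)\neq\mu(W)$ (handled by the nonvanishing of the Euler form, i.e.\ your ``slope rigidity'') and $\mu(V)=\mu(W)$, where it asserts $\Hom(W,V)\neq 0$. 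You instead run a global contradiction: after normalizing a nonzero $S[m]\in\cat Y$ with $S$ locally free, you pick $V$ with $\mu(V)\neq\mu(S)$, show $V[m+1]$ lies in neither class, and split its weight decomposition via Serre duality and $\omega_X\cong\CO_X$ to force $\mu(V)=\mu(S)$. What your route buys is that it never needs a $\Hom$-nonvanishing statement between two locally free sheaves of equal slope --- a delicate point on an elliptic curve, since for instance $\Hom(\CO_X,L)=0$ for a nontrivial degree-zero line bundle $L$, so the paper's equal-slope case is exactly where its argument is thinnest, whereas your rigidity-plus-splitting argument only ever uses vanishing of Euler forms. The price is a second appeal to the weight-decomposition axiom and to the Calabi--Yau property $\omega_X\cong\CO_X$, which, as you correctly observe, is what confines this particular argument to genus one.
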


\begin{proof}
	Suppose $(\cat X,  \cat Y)$ be a weight structure on $\deriveb X$.  Suppose 
	$\cat X \neq 0$.  We will show that $\cat Y = 0$.For any torsion sheaf $T$ on $X$ and for 
	any $i \in \Z$.  We will show $T[i] \notin \cat Y$.  
	Indeed,  by Lemma \ref{V lemma}, there is a locally free sheaf $W$ such that 
	$W[i] \in \cat X$.  Now,  note that by Lemma \ref{Non-vanishing Lemma},  
	$\euleri W T \neq 0$.  Hence,  by Lemma \ref{Euler lemma}, we have $T[i] \notin \cat Y$.
	
	In a similar fashion we will show that $\cat Y$ contains no locally free sheaves or their 
	shifts.  Let $V$ be a locally free sheaf on $X$ and $i$ be
	an integer.  We will show $V[i] \notin \cat Y$.  Now by Lemma \ref{V lemma}, there is a 
	locally free sheaf $W$ such that $W[i] \in \cat X$.  We can assume $i = 0$ without loss
	of generality. If $\mu(V) = \mu(W)$,  then $\Hom (W, V) \neq 0$,
	therefore $V \notin \cat Y$.  If $\mu(V) \neq \mu(W)$,  we can apply Lemma \ref{Slope 
	lemma} to get $\euler V W \neq 0$.  Now by Lemma \ref{Euler lemma} we get $V \notin \cat Y
	$.
	
\end{proof}

Next, we will recall the notion of tensor weight structures on $\deriveb X$.  With the technical 
result of Lemma \ref{V lemma},  and the tensor condition we can prove that unlike the 
tensor t-structures,  there are only trivial tensor weight structure on $\deriveb X$. 

\begin{definition}
	We say a weight structure $(\cat X, \cat Y)$ on $\deriveb X$ is a tensor weight structure if  
	for any $E \in \perfectzero X$ and any $ F \in \cat X$ the derived internal sheaf hom 
	$\mathbf{R}  \cat H om ^{\cdot} (E,  F)$ is in $\cat X$.  
\end{definition}  

\begin{remark}
	In \cite[Definition 13]{DS23_2} we had called such a weight structure 
	\enquote{\emph{$\otimes ^c$-weight structure}} and thought of it as a weaker notion.   The 
	notion of tensor weight structure on a triangulated category were defined using a natural 
	choice of a preaisle of the ambient category,  see Definition 7 of loc. 
	cit.  In our case it would have been the aisle $\mathbf{D}^ {b,  \leq 0} (X)$ of the 
	standard t-structure.  However, in view of Definition \ref{tensor definition}, Remark 
	\ref{key remark},  and the lack of symmetry in Theorem 15 of \cite{DS23_2} it now seems 
	that the above definition using $\perfectzero X$ is the correct one.
\end{remark}

\begin{proposition}
\label{weight structure theorem}
	Let $X$ be a smooth projective curve over a field $k$ of genus $g \geq 1$.  There are no 
	non-trivial tensor weight structures on $\deriveb X$.
\end{proposition}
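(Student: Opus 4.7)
Assume $(\cat X, \cat Y)$ is a tensor weight structure on $\deriveb X$. If $\cat X = 0$, the axiom w3 forces $\cat Y = \deriveb X$, a trivial structure; we may therefore assume $\cat X \neq 0$, and the task reduces to showing $\cat Y = 0$.

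The crucial step is to use the tensor condition to enlarge the supply of locally free objects in $\cat X$. By Lemma \ref{V lemma}, for every $i \in \Z$ there is a locally free sheaf $W_i$ with $W_i[i] \in \cat X$. For any line bundle $L$, the sheaf $L^{-1}$ lies in $\perfectzero X$; since $L^{-1}$ is locally free we have $\mathbf{R}\cat H om ^{\cdot}(L^{-1},-) = L \otimes -$, so the tensor condition yields
\[
\mathbf{R}\cat H om ^{\cdot}(L^{-1}, W_i[i]) = (W_i \otimes L)[i] \in \cat X
\]
for every $L \in \mathrm{Pic}(X)$. Thus at each shift $i$ the subcategory $\cat X$ contains locally free sheaves of arbitrarily large degree (same rank as $W_i$).

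I would then rule out every nontrivial shift of a coherent sheaf from $\cat Y$. For a nonzero torsion sheaf $T$ and any $i$, Lemma \ref{Non-vanishing Lemma}$(e)$ gives $\Hom(W_i[i], T[i]) \neq 0$, so orthogonality $\cat X \perp \cat Y$ forces $T[i] \notin \cat Y$. For a locally free $V$ and any $i$, suppose $V[i] \in \cat Y$. Then orthogonality gives $\Hom((W_i \otimes L)[i], V[i]) = H^0(X, V \otimes W_i^{\vee} \otimes L^{-1}) = 0$ for every $L$. But Riemann--Roch forces $\chi(V \otimes W_i^{\vee} \otimes L^{-1}) \to +\infty$ as $\deg L^{-1} \to \infty$, and $H^0 \geq \chi$, producing a nonzero Hom---a contradiction.

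To conclude, recall that on a smooth curve every coherent sheaf splits as torsion $\oplus$ locally free, and every object of $\deriveb X$ splits as $\bigoplus_i H^i(-)[-i]$ (the obstructions to splitting the truncation triangles live in $\Ext^{\geq 2}$-groups, which vanish on a smooth curve). By axiom (w0) every direct summand of an $F \in \cat Y$ must itself lie in $\cat Y$, but the previous paragraph shows each such summand is zero; hence $\cat Y = 0$. The main obstacle is the locally free case: the single $W_i$ furnished by Lemma \ref{V lemma} can easily satisfy $\Hom(W_i, V) = 0$, and the Euler-form argument from Theorem \ref{Elliptic Curve Theorem}---which settles this on elliptic curves via Lemma \ref{Slope lemma}---breaks down for $g \geq 2$, where non-isomorphic stable bundles can share the same slope with vanishing Euler form. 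The tensor condition, by producing the entire $\mathrm{Pic}(X)$-orbit of $W_i$ inside $\cat X$, is precisely what restores the degree of freedom needed to invoke Riemann--Roch.
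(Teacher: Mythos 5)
Your proof is correct, and it diverges from the paper's argument at the decisive step. Both proofs begin identically: Lemma \ref{V lemma} supplies, for each $i$, a locally free $W_i$ with $W_i[i]\in\cat X$; part $(e)$ of Lemma \ref{Non-vanishing Lemma} then excludes all shifts of torsion sheaves from $\cat Y$; and both reduce to single sheaves via the splitting of objects of $\deriveb X$ into shifted cohomologies together with axiom w0. For the locally free case, however, the paper applies the tensor condition to $E=W\otimes V$ and computes $\mathbf{R}\cat H om^{\cdot}(W\otimes V, W[i])\cong\oplus^{r^2}V[i]$ via the identification $\check W\otimes W\cong\CO_X^{\oplus^{r^2}}$, concluding that $V[i]$ is forced into $\cat X\cap\cat Y=0$. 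You instead apply the tensor condition only to line bundles, using $\mathbf{R}\cat H om^{\cdot}(L^{-1},-)=L\otimes-$ to place the entire $\mathrm{Pic}(X)$-orbit $(W_i\otimes L)[i]$ inside $\cat X$, and then invoke Riemann--Roch ($h^0\geq\chi\to+\infty$ as $\deg L^{-1}\to\infty$) to manufacture a nonzero morphism from some $(W_i\otimes L)[i]$ to $V[i]$, contradicting $\cat X\perp\cat Y$ directly. Your route is arguably the more robust of the two: the identification $\check W\otimes W\cong\CO_X^{\oplus^{r^2}}$ holds only for very special $W$ (essentially $W\cong L^{\oplus r}$), and Lemma \ref{V lemma} gives no control over which $W$ one obtains, whereas the internal Hom from a line bundle is unconditional; the price is an appeal to Riemann--Roch, which is entirely standard. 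Your closing diagnosis --- that the Euler-form argument of Theorem \ref{Elliptic Curve Theorem} breaks down for $g\geq2$ because non-isomorphic bundles can have vanishing Euler form, and that the tensor condition restores the needed degree of freedom --- matches the role the tensor hypothesis plays in the paper exactly.
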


\begin{proof}
	Suppose $(\cat X,  \cat Y)$ is a tensor weight structure on $\deriveb X$ and $\cat X \neq 
	0$.  Following the same argument as in  the proof of Theorem \ref{Elliptic Curve Theorem},  
	we can deduce that $\cat Y$ contains no torsion sheaf or its shifts.  Next,  we will show 
	$\cat Y$ contains no locally free sheaf or its shifts,  and hence $\cat  Y$ must be zero.
	
	Suppose $V$ is a locally free sheaf and $V [i] \in \cat Y$.  By Lemma \ref{V lemma}, there is a 
	locally free sheaf $W$ such that $W[i] \in \cat X$.  Suppose the rank of $W$ is say $r$.  We 
	then have,
	
	\begin{align*}
		\mathbf{R}  \cat H om ^{\cdot} (W \otimes V,  W[i])
		& \cong \mathbf{R}  \cat H om ^{\cdot} (V,  \check{W} \otimes W [i] )\\
		& \cong \mathbf{R}  \cat H om ^{\cdot} (V,  \CO _X ^{\oplus ^{r^2}} [i] )\\
		& \cong \oplus ^{r^2} \mathbf{R}  \cat H om ^{\cdot} (V,  \CO _X )[i]\\
		& \cong \oplus ^{r^2} V[i]
	\end{align*}
	
	Now by the tensor property of $\cat X$ and that it is closed under direct summands, we get
	that $V[i] \in \cat X$.   Since $\cat X \cap \cat Y = 0$,  so $V = 0$.  This proves that $\cat Y$
	is trivial.  
	
\end{proof}


\section*{Acknowledgement}

The author expresses gratitude for the serene and beautiful seaside of TIFR,  Mumbai and 
extends sincere thanks to the support staff for their invaluable assistance.  The questions 
addressed in this article were conceived during my PhD years,  and I am deeply grateful to my 
advisor, Umesh Dubey, for encouraging me to pursue these questions.

\bibliographystyle{alpha}
\bibliography{Project3}

\newcommand{\etalchar}[1]{$^{#1}$}
\begin{thebibliography}{ATJLSS03}

\bibitem[ATJLS10]{AJS10}
Leovigildo Alonso~Tarr\'{\i}o, Ana Jerem\'{\i}as~L\'{o}pez, and Manuel
  Saor\'{\i}n.
\newblock Compactly generated {$t$}-structures on the derived category of a
  {N}oetherian ring.
\newblock {\em J. Algebra}, 324(3):313--346, 2010.

\bibitem[ATJLSS03]{AJS03}
Leovigildo Alonso~Tarr\'{\i}o, Ana Jerem\'{\i}as~L\'{o}pez, and
  Mar\'{\i}a~Jos\'{e} Souto~Salorio.
\newblock Construction of {$t$}-structures and equivalences of derived
  categories.
\newblock {\em Trans. Amer. Math. Soc.}, 355(6):2523--2543, 2003.

\bibitem[BB07]{BB07}
Kristian Br\"uning and Igor Burban.
\newblock Coherent sheaves on an elliptic curve.
\newblock In {\em Interactions between homotopy theory and algebra}, volume 436
  of {\em Contemp. Math.}, pages 297--315. Amer. Math. Soc., Providence, RI,
  2007.

\bibitem[BBD82]{BBD}
A.~A. Be\u{\i}linson, J.~Bernstein, and P.~Deligne.
\newblock Faisceaux pervers.
\newblock In {\em Analysis and topology on singular spaces, {I} ({L}uminy,
  1981)}, volume 100 of {\em Ast\'{e}risque}, pages 5--171. Soc. Math. France,
  Paris, 1982.

\bibitem[BCL{\etalchar{+}}24]{Biswas}
Rudradip Biswas, Alexander Clark, Pat Lank, Kabeer~Manali Rahul, and Chris~J.
  Parker.
\newblock Classification and nonexistence results for tensor $t$-structures on
  derived categories of schemes, 2024.

\bibitem[Bez00]{Bez10}
Roman Bezrukavnikov.
\newblock Perverse coherent sheaves (after deligne), 2000.

\bibitem[Bon10]{Bon10}
M.~V. Bondarko.
\newblock Weight structures vs. {$t$}-structures; weight filtrations, spectral
  sequences, and complexes (for motives and in general).
\newblock {\em J. K-Theory}, 6(3):387--504, 2010.

\bibitem[DS23a]{DS23}
Umesh~V Dubey and Gopinath Sahoo.
\newblock Compactly generated tensor t-structures on the derived categories of
  {N}oetherian schemes.
\newblock {\em Math. Z.}, 303(4):Paper No. 100, 22, 2023.

\bibitem[DS23b]{DS23_2}
Umesh~V. Dubey and Gopinath Sahoo.
\newblock Tensor weight structures and t-structures on the derived categories
  of schemes.
\newblock {\em C. R. Math. Acad. Sci. Paris}, 361:877--888, 2023.

\bibitem[Gab04]{Gabber}
Ofer Gabber.
\newblock Notes on some {$t$}-structures.
\newblock In {\em Geometric aspects of {D}work theory. {V}ol. {I}, {II}}, pages
  711--734. Walter de Gruyter, Berlin, 2004.

\bibitem[Hop87]{Hop87}
Michael~J. Hopkins.
\newblock Global methods in homotopy theory.
\newblock In {\em Homotopy theory ({D}urham, 1985)}, volume 117 of {\em London
  Math. Soc. Lecture Note Ser.}, pages 73--96. Cambridge Univ. Press,
  Cambridge, 1987.

\bibitem[Hrb20]{Hrbek}
Michal Hrbek.
\newblock Compactly generated t-structures in the derived category of a
  commutative ring.
\newblock {\em Math. Z.}, 295(1-2):47--72, 2020.

\bibitem[Kas04]{Kashiwara}
Masaki Kashiwara.
\newblock {$t$}-structures on the derived categories of holonomic {$\scr
  D$}-modules and coherent {$\scr O$}-modules.
\newblock {\em Mosc. Math. J.}, 4(4):847--868, 981, 2004.

\bibitem[Nee92]{Nee92}
Amnon Neeman.
\newblock The chromatic tower for {$D(R)$}.
\newblock {\em Topology}, 31(3):519--532, 1992.
\newblock With an appendix by Marcel B\"{o}kstedt.

\bibitem[Oka11]{Okawa}
Shinnosuke Okawa.
\newblock Semi-orthogonal decomposability of the derived category of a curve.
\newblock {\em Adv. Math.}, 228(5):2869--2873, 2011.

\bibitem[SP16]{SP16}
Jan {S}\v{t}ov\'{\i}\v{c}ek and David Posp\'{\i}\v{s}il.
\newblock On compactly generated torsion pairs and the classification of
  co-{$t$}-structures for commutative noetherian rings.
\newblock {\em Trans. Amer. Math. Soc.}, 368(9):6325--6361, 2016.

\bibitem[Sta10]{Sta10}
Don Stanley.
\newblock Invariants of {$t$}-structures and classification of nullity classes.
\newblock {\em Adv. Math.}, 224(6):2662--2689, 2010.

\bibitem[Tho97]{Thomason}
R.~W. Thomason.
\newblock The classification of triangulated subcategories.
\newblock {\em Compositio Math.}, 105(1):1--27, 1997.

\end{thebibliography}

\end{document}